\documentclass[11pt,letterpaper]{article}
\usepackage{booktabs}
\usepackage[letterpaper,margin=0.98in]{geometry}

\usepackage{bm}
\usepackage{authblk}
\usepackage{mlmodern}
\usepackage{inconsolata}

\usepackage{mdframed}

\usepackage{dsfont}

\usepackage{amsfonts,nicefrac}
\usepackage[utf8]{inputenc}
\usepackage[dvipsnames]{xcolor}
\usepackage{graphicx}
\usepackage{multirow}
\usepackage{amsmath}
\usepackage{amsthm}
\usepackage{amssymb}
\usepackage{comment}
\usepackage{cancel}

\usepackage[shortlabels]{enumitem}
\usepackage[style=alphabetic, maxbibnames=15, giveninits=true, maxcitenames=15, natbib=true,
  maxalphanames=10, backend=biber, sorting=nty, backref=true, uniquename=false]{biblatex}
\DefineBibliographyStrings{english}{backrefpage = {page},backrefpages = {pages},}
\DeclareNameAlias{sortname}{given-family}
\renewbibmacro{in:}{\ifentrytype{article}{}{\printtext{\bibstring{in}\intitlepunct}}}
  
\makeatletter
\newtheorem{theorem}{Theorem}[section]

\newtheorem{lemma}[theorem]{Lemma}
\newtheorem{proposition}[theorem]{Proposition}
\newtheorem{definition}{Definition}[section]
\newtheorem{assumption}{Assumption}[section]
\theoremstyle{remark}
\newtheorem{remark}{Remark}[section]

\usepackage{hyperref}
\hypersetup{
  colorlinks   = true, urlcolor     = blue, linkcolor    = blue, citecolor   = red }
\makeatother

\renewcommand{\epsilon}{\varepsilon}

\usepackage{eqparbox}

\usepackage[capitalise]{cleveref}
\crefname{equation}{}{}

\title{Adaptive Matrix Online Learning through Smoothing with Guarantees for Nonsmooth Nonconvex Optimization \vspace{3mm}

\footnotetext{The authors are listed in alphabetical order.

$\! \! \quad$ The work of RJ was partially done while he was a Student Researcher at Google Research.}}
\usepackage{times}

\renewcommand*{\Affilfont}{\normalsize}

\renewcommand*{\Affilfont}{\normalsize}

\makeatletter
\renewcommand\AB@affilsepx{, \protect\Affilfont}
\renewcommand\AB@affilsep{\protect\Affilfont}
\makeatother

\author[$\dagger$]{Ruichen Jiang}
\author[$\ddagger$]{Zakaria Mhammedi}
\author[$\ddagger$]{Mehryar Mohri}
\author[$\dagger$,$\ddagger$]{Aryan Mokhtari}

\affil[$\dagger$]{UT Austin}
\affil[$\ddagger$]{Google Research}

\date{}

\usepackage{amsmath}
\usepackage{amssymb}
\usepackage{mathtools}
\usepackage{subcaption}
\usepackage{makecell,threeparttable}
\usepackage{algorithm,algorithmic}

\newcommand\Vector[1]{\bm{#1}}

\newcommand\ve{{\Vector{e}}}

\newcommand\vg{{\Vector{g}}}

\newcommand\vr{{\Vector{r}}}
\newcommand\vs{{\Vector{s}}}

\newcommand\vu{{\Vector{u}}}
\newcommand\vv{{\Vector{v}}}

\newcommand\vx{{\Vector{x}}}
\newcommand\vy{{\Vector{y}}}
\newcommand\vz{{\Vector{z}}}

\newcommand\vmu{{\Vector{\mu}}}

\newcommand\MATRIX[1]{\mathbf{#1}}

\newcommand\mA{{\MATRIX{A}}}
\newcommand\mB{{\MATRIX{B}}}

\newcommand\mD{{\MATRIX{D}}}
\newcommand\mE{{\MATRIX{E}}}

\newcommand\mG{{\MATRIX{G}}}
\newcommand\mH{{\MATRIX{H}}}
\newcommand\mI{{\MATRIX{I}}}
\newcommand\mJ{{\MATRIX{J}}}

\newcommand\mL{{\MATRIX{L}}}
\newcommand\mM{{\MATRIX{M}}}
\newcommand\mN{{\MATRIX{N}}}

\newcommand\mP{{\MATRIX{P}}}
\newcommand\mQ{{\MATRIX{Q}}}
\newcommand\mR{{\MATRIX{R}}}
\newcommand\mS{{\MATRIX{S}}}
\newcommand\mT{{\MATRIX{T}}}
\newcommand\mU{{\MATRIX{U}}}
\newcommand\mV{{\MATRIX{V}}}
\newcommand\mW{{\MATRIX{W}}}
\newcommand\mX{{\MATRIX{X}}}
\newcommand\mY{{\MATRIX{Y}}}
\newcommand\mZ{{\MATRIX{Z}}}

\newcommand\mSigma{{\bm{\Sigma}}}

\newcommand\sO{{\mathbb{O}}}

\newcommand\sR{{\mathbb{R}}}

\newcommand\bigO{\mathcal{O}}
\newcommand\semiS[1]{{\mathbb{S}_{+}^{#1}}}

\DeclareMathOperator*{\E}{\mathbb{E}}

\newcommand\op{{\mathrm{op}}}
\newcommand{\reals}{\mathbb{R}}

\DeclareMathOperator*{\argmax}{arg\,max}
\DeclareMathOperator*{\argmin}{arg\,min}

\DeclareMathOperator{\Tr}{Tr}

\DeclareMathOperator*{\tr}{Tr}

\newcommand{\cX}{\mathcal{X}}

\newcommand{\wt}{\widetilde}
\newcommand{\h}{\widehat}
\newcommand{\R}{\mathfrak R}

\newcommand{\reg}{\mathrm{Reg}}
\newcommand{\Reg}{\mathrm{Reg}}

\newcommand{\diag}{\mathrm{diag}}

\newcommand{\Breg}[3]{\mathcal{B}_{#1}\!\left(#2 \,\middle\|\, #3\right)}
\newcommand{\mzero}{\MATRIX{0}}
\newcommand{\dual}{_{\dagger}}
\newcommand{\mypara}[1]{\noindent\textbf{#1.}\ }

\begin{document}

\maketitle

\begin{abstract}We study online linear optimization with matrix variables constrained by the \emph{operator norm}, a setting where the geometry renders designing \emph{data-dependent} and \emph{efficient} adaptive algorithms challenging.
  The best-known adaptive regret bounds are achieved by Shampoo-like methods, but they require solving a costly quadratic projection subproblem. To address this, we extend the gradient-based prediction scheme to adaptive matrix online learning and cast algorithm design as constructing a family of smoothed potentials for the nuclear norm. We define a notion of admissibility for such smoothings and prove any admissible smoothing yields a regret bound matching the best-known guarantees of one-sided Shampoo. We instantiate this framework with two efficient methods that avoid quadratic projections. The first is an adaptive Follow-the-Perturbed-Leader (FTPL) method using Gaussian stochastic smoothing. The second is {Follow-the-Augmented-Matrix-Leader} (FAML), which uses a deterministic hyperbolic smoothing in an augmented matrix space. By analyzing the admissibility of these smoothings, we show both methods admit closed-form updates and match one-sided Shampoo's regret up to a constant factor, while significantly reducing computational cost. Lastly, using the online-to-nonconvex conversion, we derive two matrix-based optimizers, \emph{Pion} (from FTPL) and \emph{Leon} (from FAML). We prove convergence guarantees for these methods in nonsmooth nonconvex settings, a guarantee that the popular Muon optimizer lacks.
\end{abstract}

\newpage

\section{Introduction}

While Online Linear Optimization (OLO) is well-established for vector spaces in $\mathbb{R}^d$, modern applications such as neural network training are often natively cast in terms of matrix variables $\mX \in \mathbb{R}^{m \times n}$. In this context, relying on a naive reduction to the vector setting is problematic. Vectorization obscures intrinsic spectral structures, most notably operator-norm constraints, that are distinct from Euclidean geometry and essential for efficient optimization. To avoid the suboptimal regret bounds inherent in such reductions, we investigate OLO directly within the matrix domain. The problem is formally defined as follows:

\begin{mdframed}[skipabove=10pt,linewidth=1pt,innerleftmargin=.3em]\label{box:OL1}
    \textbf{Matrix Online Linear Optimization
    } \\For $t=1,\dots,T$:
    \begin{itemize}
            \vspace{-2mm}
        \item Learner chooses a matrix $\mX_t$ from a decision set $\mathcal{X}$;  
        \vspace{-2mm}
        \item Environment selects a gradient matrix $\mathbf{G}_t\in \mathbb{R}^{m \times n}$  defining the linear loss $\ell_t(\mX) = \langle \mathbf{G}_t, \mX \rangle$;
    \end{itemize}
                    \vspace{-2mm}
    \textbf{Goal:} Minimize 
$\Reg_T = \sum_{t=1}^T \langle \mathbf{G}_t, \mX_t \rangle - \min_{\mX \in \mathcal{X}} \sum_{t=1}^T \langle \mathbf{G}_t, \mX \rangle$.
\end{mdframed}

Matrix OLO has been extensively studied for problems like online variance minimization~\citep{warmuth2006online}, PCA~\citep{warmuth2008randomized}, and collaborative filtering~\citep{hazan2012near}, typically under nuclear-norm constraints. To address these, matrix multiplicative weight updates~\citep{arora2007combinatorial,arora2012multiplicative} have been proposed as a natural generalization of the classical Hedge algorithm. Matrix OLO with Schatten-$p$ constraints has also been applied to multi-task classification~\citep{agarwal2008matrix,cavallanti2010linear}, for which \cite{kakade2012regularization} developed mirror descent algorithms induced by matrix Bregman divergences.

In this paper, we focus on the case where the decision set $\mathcal{X}$ is an operator-norm ball, i.e., 
   $ \mathcal{X} = \{ \mX \in \mathbb{R}^{m \times n} : \|\mX\|_{\text{op}} \leq D \}$. As discussed in Section~\ref{subsec:motivating} {and detailed further in Appendix~\ref{app_motivating_example}}, this constraint naturally arises in classical problems such as learning rotations, as well as in modern optimization applications including the design of preconditioned gradient methods and neural network training. Our goal is to design adaptive online algorithms with data-dependent regret guarantees, in the spirit of AdaGrad~\citep{mcmahan2010adaptive,duchi2011adaptive}. In the matrix setting with operator-norm constraints, the strongest known regret guarantees are achieved by the one-sided Shampoo/ASGO methods proposed concurrently in~\citep{xie2025structured,an2025asgo}. Building on the Shampoo preconditioner~\citep{gupta2018shampoo}, these methods attain a regret bound of $\bigO(D\tr(\sqrt{\mM_T}))$, where $\mM_T = \sum_{t=1}^T \mG_t\mG_t^\top$, thereby offering better adaptivity over AdaGrad. However, enforcing feasibility requires solving a costly quadratic projection subproblem, which lacks a closed-form solution. This motivates the question: \emph{can one retain the same adaptive regret guarantees with greater computational efficiency?}

\medskip
\mypara{Contributions} We develop a unified framework for adaptive matrix online learning under operator-norm constraints that avoids quadratic projections while matching the best-known regret guarantees of one-sided Shampoo, answering the above question affirmatively. Our contributions are threefold.

First, we generalize the \textit{Gradient-Based Prediction Algorithm} (GBPA)~\citep{Abernethy2016} to adaptive matrix online algorithms, where algorithm design is cast as constructing a family of smoothed potentials parameterized by a positive semidefinite (PSD) matrix~$\mL$ that captures problem geometry. We formalize the notion of \emph{$(\alpha,\beta)$-admissible smoothings} of the \emph{nuclear norm} induced by the operator-norm constraint.
We show that, when the parameter $\mL$ is chosen adaptively, GBPA with any admissible potential family attains the regret bound $
\reg_T = \mathcal{O}\big(\sqrt{\alpha\beta}\,D\,\Tr(\sqrt{\mM_T})\big)$,
matching one-sided Shampoo up to constants (Theorem~\ref{thm:main}). Since the bound depends only on the {product} $\alpha\beta$, we characterize optimal admissibility by proving that any $(\alpha,\beta)$-admissible smoothing must satisfy $\alpha\beta \ge \tfrac{1}{2}$, and by exhibiting a regularized smoothing that attains this lower bound (Proposition~\ref{prop:smoothing}).

Second, to avoid the prohibitive cost of quadratic subproblems in one-sided Shampoo, we introduce two efficient online algorithms based on novel smoothed potentials. 
The first uses Gaussian stochastic smoothing, leading to an adaptive Follow-the-Perturbed-Leader (FTPL) algorithm that is parallelizable and relies on efficient matrix primitives;  
using noncentral Wishart theory, we show that this smoothing is admissible up to a mild dimension-dependent factor (Theorem~\ref{thm:regret_FTPL}).
Our main algorithm, {Follow-the-Augmented-Matrix-Leader} (FAML), is based on a \emph{deterministic} and \emph{explicit} hyperbolic smoothing tailored to the nuclear norm. By lifting to an augmented space, FAML admits closed-form updates, avoiding quadratic projections. It achieves near-optimal admissibility (Theorem~\ref{thm:FAML}) and matches one-sided Shampoo's regret up to a factor of two at substantially lower cost.\looseness=-1

Finally, we leverage our adaptive matrix online learning framework and apply the \textit{Online-to-Nonconvex Conversion} (O2NC) paradigm~\citep{cutkosky2023optimal} to obtain efficient matrix-based optimizers with provable guarantees for \emph{nonsmooth nonconvex} optimization. We formally identify the popular {Muon} optimizer as an instance of spectrally constrained FTL, clarifying its lack of guarantees in nonsmooth settings. In contrast, we introduce {Pion} (derived from FTPL) and {Leon} (derived from FAML), and establish that both converge to $(\rho,\varepsilon)$-stationary points for general nonsmooth nonconvex objectives (Theorems~\ref{thm_pion} and~\ref{thm:Leon}).

\subsection{Motivating Problems}\label{subsec:motivating}

Next, we focus on matrix-based deep learning optimization as the primary instance of the matrix OLO problem subject to operator-norm constraints. Additional applications, including learning rotations and online quasi-Newton updates, are deferred to Appendix~\ref{app_motivating_example}.

\vspace{1mm}

\textbf{Matrix Optimization Algorithms.} Deep learning architectures are inherently matrix-valued. Spectral optimizers like Muon \citep{jordan2024muon} exploit this structure, demonstrating significant empirical advantages over element-wise baselines. By leveraging the Online-to-Nonconvex Conversion (O2NC) framework \citep{cutkosky2023optimal,ahn2025general}, these methods can be rigorously modeled as Matrix OLO instances, where minimizing regret guarantees convergence to Goldstein stationary points. As detailed in Section~\ref{sec:app}, Muon corresponds precisely to spectrally-constrained FTL. We use this connection to derive a new optimization method with rigorous convergence guarantees in the nonsmooth setting, addressing a key theoretical gap in the standard Muon algorithm.\looseness=-1

\section{Adaptive Matrix Online Learning}\label{sec:adaptive_matrix_OL}

In the vector setting, it is known that achieving optimal regret requires aligning the adaptive preconditioner with the geometry of the constraint set; we defer a rigorous discussion of this alignment to Appendix~\ref{appen:vector_case}. Applying this principle to Matrix OLO via naive vectorization, however, encounters two fundamental barriers. First, treating an $m \times n$ matrix as a vector of dimension $d=mn$ implies a full preconditioner of size $d \times d$, incurring prohibitive storage and computational costs. Second, and more critically, vectorization obliterates the underlying spectral structure. Standard adaptive variants like Diagonal AdaGrad implicitly assume a hyper-rectangular constraint geometry, which is ill-suited for the spectral features of the operator-norm ball. Consequently, efficient optimization in this setting demands algorithms explicitly tailored to the matrix domain, which we review below.

A natural extension of adaptive Online Gradient Descent (OGD) to the matrix setting is characterized by the following update rule. Given a gradient matrix $\mG_t \in \mathbb{R}^{m \times n}$, we compute:
\begin{equation}\label{eq:shampoo}
    \mX_{t+1} = \argmin_{\mX \in \cX} \Bigl\{\langle \mG_t, \mX-\mX_t \rangle + \frac{1}{2\eta} \Tr((\mX-\mX_t)^\top\mL_t (\mX-\mX_t)\mR_t)  \Bigr\},
\end{equation}
where $\mL_t \in \mathbb{S}^{m \times m}_{++}$ and $\mR_t \in \mathbb{S}^{n \times n}_{++}$ denote the left and right preconditioners, respectively. This formulation admits a direct interpretation within the standard AdaGrad framework. Specifically, let $\vx = \text{vec}(\mX)$ and $\vg = \text{vec}(\mG)$ denote the vectorized decision variable and gradient obtained by stacking the columns of the matrix. 
The regularization term in \eqref{eq:shampoo} coincides with the quadratic form $\tfrac{1}{2\eta}(\vx-\vx_t)^\top \mH_t(\vx-\vx_t)$ from vector AdaGrad, with the additional constraint that the preconditioner factorizes as a Kronecker product $\mH_t=\mR_t\otimes \mL_t$. Under this structure, the penalty admits the efficient decomposition $(\vx - \vx_t)^\top (\mR_t \otimes \mL_t)(\vx - \vx_t) = \Tr\left( (\mX - \mX_t)^\top \mL_t (\mX - \mX_t)\mR_t \right)$.
Notably, Shampoo~\citep{gupta2018shampoo} and its one-sided variants~\citep{xie2025structured,an2025asgo} correspond to different choices of $\mL_t$ and $\mR_t$; we summarize these selections and the resulting regret bounds in Table~\ref{tab:shampoo} and note that the latter achieves the strongest known guarantee.

Despite the success of these methods in mitigating vectorization overhead and preserving matrix structure, they face a fundamental obstruction when constrained to the operator-norm ball, i.e., $    \mathcal{X} = \{ \mX \in \mathbb{R}^{m \times n} \colon \|\mX\|_{\text{op}} \leq D \}$. In this regime, the update step in \eqref{eq:shampoo} necessitates minimizing a convex quadratic objective over the operator-norm constraint, a problem that admits no closed-form solution. While practitioners often elide this projection step for computational expediency, such heuristics are known to invalidate worst-case regret guarantees \citep{orabona2018scale}.

Solving this subproblem at each round using iterative methods is also costly. Projection-based methods achieve linear convergence but necessitate a full Singular Value Decomposition (SVD) at every step. Conversely, Frank-Wolfe algorithms avoid SVDs by solving a linear subproblem, computable via an affordable polar factorization, but suffer from slower sublinear $\mathcal{O}(1/K)$ convergence. This requires a prohibitive number of oracle calls per update to achieve high precision. We detail complexity analysis in Appendix~\ref{appen:subproblem_complexity}. The core challenge is thus matching One-sided Shampoo's adaptive regret without prohibitive quadratic projections, which we address next.

\begin{table}[t]
    \centering
    \small
    \begin{threeparttable}
    \begin{tabular}{lll}
    \toprule
    \textbf{Algorithm}  & \textbf{Preconditioners} $\mL_t$ and $\mR_t$ & \textbf{Regret Bound} \\
    \midrule
    \makecell[l]{Shampoo$^\dagger$\\ \scriptsize\citep{gupta2018shampoo}}     & $\mL_t = \mM_t^{1/4},\mR_t = \mN_t^{1/4}$ & $ 
\|\cX\|_{\op} \max\limits_{\substack{
(a,b)\in\{(F,F),\\
(*,\op),(\op,*)\}
}}
  \|\mM_T^{1/4}\|_{a}\,\|\mN_T^{1/4}\|_{b}
    $ \\
    \makecell[l]{One-sided Shampoo\\ \scriptsize\citep{xie2025structured,an2025asgo}} &  {$\mL_t = \mM_t^{1/2},\mR_t = \mI$}   &   
    $\|\cX\|_{\op}\Tr(\mM_T^{1/2})$ 
\end{tabular}
    \end{threeparttable}
\caption{\small{{Here, $\mM_T = \sum_{t=1}^T \mG_t\mG_t^\top$ and $\mN_T = \sum_{t=1}^T \mG_t^\top\mG_t$. Moreover,  $\|\cX\|_{\op}= \max_{\mX, \mY \in \cX} \|\mX-\mY\|_{\op}$.\\ $^\dagger$~The reported regret for Shampoo is proven in Theorem~\ref{thm:improved_shampoo} (Appendix~\ref{appen:improved_shampoo}) which improves the original bound.  }}}
    \label{tab:shampoo}
\end{table}

\begin{remark}
While FTRL with Kronecker regularizers is expected to match OGD-style Shampoo guarantees, a corresponding analysis appears to be missing in prior work. Our framework supplies such Shampoo-type bounds for adaptive matrix FTRL; however, it still requires solving the same expensive operator-norm-constrained quadratic subproblems, motivating the more efficient approach developed later in the paper.
\end{remark}

\section{A Unified Framework for Adaptive Matrix Online Algorithms}\label{subsec:GBPA}

Before addressing the bottleneck of quadratic projections, we introduce a broad class of algorithms for Matrix Online Linear Optimization. We establish their regret guarantees via a unified framework that isolates the structure essential for spectral adaptivity. In Section~\ref{sec:algs}, we instantiate this framework to derive two concrete algorithms that effectively circumvent this computational barrier.

Our proposed framework can be considered as a generalization of the Gradient-Based Prediction Algorithm (GBPA) \citep{Abernethy2016} to the matrix domain. By incorporating potentials that capture intrinsic spectral geometry, we specialize this approach to operator-norm constraints, enabling near-optimal adaptive guarantees that are unattainable via standard vector reductions.

Recalling the Matrix OLO formulation, a GBPA strategy generates the action $\mX_{t+1}$ by evaluating the gradient of a convex potential $\wt{\Phi}_t : \mathbb{R}^{m \times n} \to \mathbb{R}$ at the cumulative gradient $\mS_t = \sum_{s=1}^t \mG_s$. To enforce the feasibility constraint $\|\mX_{t+1}\|_{\op} \le D$, we restrict the potentials such that $\|\nabla \wt{\Phi}_t\|_{\op} \le 1$ and scale the output by $D$. This yields the explicit update:
\vspace{0mm}
 \begin{equation}\label{eq:GBPA}
 \vspace{0mm}
      \mX_{t+1} = -D\nabla \wt{\Phi}_t(\mS_t) ,
 \end{equation}
for $t \geq 1$, and the initial action is set as $\mX_1=0$. 
Indeed, distinct choices of potential functions instantiate different algorithms for Matrix OLO. Now given the fact that our online learning problem is constrained by the operator-norm ball, we define the base potential function as $\Phi(\mS)=\max_{\|\mX\|_{\op}\leq 1}\langle \mX, \mS\rangle $ which can be further simplified as $\Phi(\mS)=\|\mS\|_*$.  Hence, the regret of the matrix OLO problem can be written as 
$
   \reg_T= \sum_{t=1}^T\langle \mG_t, \mX_t\rangle + D \Phi(\mS_T) =\sum_{t=1}^T\langle \mG_t, \mX_t\rangle + D \|\mS_T\|_* 
$.
The following lemma provides the central regret decomposition for this class of algorithms and is a direct corollary of \cite[Lemma 1.2]{Abernethy2016}; the proof is deferred to Appendix~\ref{appen:lem_GBPA}.\looseness=-1

\begin{lemma}\label{lem:GBPA}
     Define $\Breg{f}{\mU}{\mV}\!:= \!f(\mU) - f(\mV) - \langle \nabla f(\mV), \mU-\mV \rangle$ as the Bregman divergence with respect to a function $f$. If $\{\mX_t\}$ is generated by \eqref{eq:GBPA}, then its regret can be decomposed as:  
    \begin{equation*}
        \reg_T =  D\bigl(\Phi(\mS_T) - \wt{\Phi}_T(\mS_T)  \bigr) + DS\sum_{t=1}^{T-1} \Breg{\wt{\Phi}_{t}}{\mS_{t+1}}{\mS_{t}} + D\sum_{t=1}^{T-1} \bigl(\wt{\Phi}_{t+1}(\mS_{t+1}) - \wt{\Phi}_{t}(\mS_{t+1})\bigr) \!+D \wt{\Phi}_1(\mG_1).
    \end{equation*}
\end{lemma}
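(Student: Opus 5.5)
The plan is a direct telescoping argument using only the update rule \eqref{eq:GBPA} and the identity $\reg_T = \sum_{t=1}^T\langle \mG_t,\mX_t\rangle + D\,\Phi(\mS_T)$ noted just before the statement. The stray factor ``$S$'' in the second term is presumably a typo, and I read that term as $D\sum_{t=1}^{T-1}\Breg{\wt{\Phi}_{t}}{\mS_{t+1}}{\mS_{t}}$. Since $\mX_1 = 0$, the first round contributes nothing to the linear sum. For $t\ge 1$ we have $\mX_{t+1} = -D\nabla\wt{\Phi}_t(\mS_t)$ and $\mG_{t+1} = \mS_{t+1}-\mS_t$, so
\[
\sum_{t=1}^T\langle \mG_t,\mX_t\rangle = -D\sum_{t=1}^{T-1}\langle \nabla\wt{\Phi}_t(\mS_t), \mS_{t+1}-\mS_t\rangle .
\]

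Next I would substitute the definition of the Bregman divergence, term by term:
\[
-\langle \nabla\wt{\Phi}_t(\mS_t), \mS_{t+1}-\mS_t\rangle = \Breg{\wt{\Phi}_t}{\mS_{t+1}}{\mS_t} - \wt{\Phi}_t(\mS_{t+1}) + \wt{\Phi}_t(\mS_t).
\]
I would then add and subtract $\wt{\Phi}_{t+1}(\mS_{t+1})$ in each summand. This gives
\[
-\wt{\Phi}_t(\mS_{t+1}) + \wt{\Phi}_t(\mS_t) = \bigl(\wt{\Phi}_{t+1}(\mS_{t+1}) - \wt{\Phi}_t(\mS_{t+1})\bigr) - \bigl(\wt{\Phi}_{t+1}(\mS_{t+1}) - \wt{\Phi}_t(\mS_t)\bigr).
\]
The last bracket telescopes over $t=1,\dots,T-1$ to $-\bigl(\wt{\Phi}_T(\mS_T) - \wt{\Phi}_1(\mS_1)\bigr)$.

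To finish, I would multiply through by $D$, add $D\Phi(\mS_T)$, and use $\mS_1=\mG_1$. This yields exactly
\[
D\bigl(\Phi(\mS_T)-\wt{\Phi}_T(\mS_T)\bigr) + D\sum_{t=1}^{T-1}\Breg{\wt{\Phi}_t}{\mS_{t+1}}{\mS_t} + D\sum_{t=1}^{T-1}\bigl(\wt{\Phi}_{t+1}(\mS_{t+1})-\wt{\Phi}_t(\mS_{t+1})\bigr) + D\wt{\Phi}_1(\mG_1).
\]

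There is no real obstacle. The only care needed is with indices: the first action is $\mX_1=0$ rather than a gradient of some $\wt{\Phi}_0$, which is why the sums run to $T-1$ and the boundary term is $\wt{\Phi}_1(\mG_1)$. I would also check that the regret identity with $\Phi(\mS)=\|\mS\|_*$ holds, since $\min_{\|\mX\|_{\op}\le D}\langle \mS_T,\mX\rangle = -D\|\mS_T\|_*$ by duality of the operator and nuclear norms. Differentiability of $\wt{\Phi}_t$ is implicitly assumed by \eqref{eq:GBPA}; for the stochastic smoothings the same argument goes through with $\nabla$ interpreted as the gradient of the smoothed potential.
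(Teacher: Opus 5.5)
Your proposal is correct and follows essentially the same route as the paper. Both arguments rewrite each $\langle \mG_{t+1},\mX_{t+1}\rangle$ through the Bregman divergence of $\wt{\Phi}_t$, add and subtract $\wt{\Phi}_{t+1}(\mS_{t+1})$, telescope over $t=1,\dots,T-1$ using $\mX_1=\mzero$, and then add $D\Phi(\mS_T)$. You are also right that the factor $S$ in the second term is a typo: the paper's own derivation gives $D\sum_{t=1}^{T-1}\Breg{\wt{\Phi}_{t}}{\mS_{t+1}}{\mS_{t}}$.
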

This regret decomposition admits three interpretable terms. The first term, $D\bigl(\Phi(\mS_T) - \wt{\Phi}_T(\mS_T)  \bigr)$, captures the underestimation error of the surrogate potential $\wt{\Phi}_T$ relative to the base potential $\Phi$. The second term involves the Bregman divergence induced by $\wt{\Phi}_t$ and reflects the smoothness of the surrogate potential along the trajectory $\{\mS_t\}$. The remaining accounts for the temporal variation of the potential sequence $\{\wt{\Phi}_t\}$ and measures its stability across rounds. 

To build intuition for choosing the surrogate potentials ${\wt{\Phi}_t}$, consider the simplest case where $\wt{\Phi}_t=\Phi$ for all $t\ge 1$. This choice recovers the classical Follow-the-Leader (FTL) algorithm, i.e., $
  \textstyle \mX_{t+1} = \argmin_{\|\mX\|_{\op} \leq D} \bigl\{ \langle \sum_{s=1}^t \mG_s, \mX \rangle \bigr\}$.
In this case, both the underestimation error and the temporal variation term in \eqref{eq:GBPA} vanish, yielding $\reg_T = D \sum_{t=1}^{T-1} \Breg{\Phi}{\mS_{t+1}}{\mS_{t}} + D \Phi(\mG_1)$. However, as the base potential $\Phi$, i.e., the nuclear norm, is nonsmooth, this can be exploited by the adversary to incur a large  Bregman divergence at each time step, leading to an $\Omega(T)$ regret bound. 

This motivates the construction of a sequence of surrogate potentials $\{\wt{\Phi}_t\}$ that closely approximates the base potential $\Phi$ while enjoying favorable smoothness properties. This is our main point of departure from \citep{Abernethy2016}. There, the authors select $\{\wt{\Phi}_t\}$ from a scalar-parametrized family $\{\wt{\Phi}_\eta: \eta > 0\}$, achieving an adaptive regret bound similar to Scalar AdaGrad (in our setting, $D\sqrt{\sum_{t=1}^T \|\mG_t\|_F^2}$). In contrast, to match the adaptive data-dependent guarantees of Shampoo (Table~\ref{tab:shampoo}), the smoothness of $\wt{\Phi}_t$ must depend on a preconditioner matrix, mirroring the adaptive regularizers in \eqref{eq:shampoo}. Accordingly, we choose $\{\wt{\Phi}_t\}$ from a family of potentials $\{\wt{\Psi}(\cdot;{\mL}): \mL \in \semiS{m}\}$ parametrized by a PSD matrix $\mL$, and formalize these desiderata in the following definition.

\begin{definition}\label{def:admissible_potential}
  Let $\{\wt{\Psi}(\cdot;{\mL}): \mL \in \semiS{m}\}$ be a family of potentials parametrized by a PSD matrix $\mL$. We say that $\wt{\Psi}(\cdot;\cdot)$ is an $(\alpha, \beta)$-admissible smoothing of $\|\cdot\|_*$ if the following conditions hold: 
  \begin{enumerate}[(a)]
    \item\label{item:feas} \emph{(Feasibility)} For all $\mL \in \semiS{m}$ and $\mX \in \reals^{m\times n}$, $\|\nabla_{\mX} \wt{\Psi}(\mX;\mL)\|_{\op} \leq 1$.
    \item\label{item:dominance} \emph{(Dominance)} For all $\mL \in \semiS{m}$ and $\mX \in \reals^{m\times n}$, $\wt{\Psi}(\mX;\mL) \geq \|\mX\|_*$ and $\tilde{\Psi}(\mX;\mzero) = \|\mX\|_*$.
    \item\label{item:stable} \emph{(Upper stability)} For any $\mL_1 \preceq \mL_2 $, $\sup_{\mX} (\wt{\Psi}(\mX;\mL_2) - \wt{\Psi}(\mX;\mL_1)) \leq \alpha (\Tr(\mL_2) - \Tr(\mL_1))$.
    \item\label{item:smooth} \emph{(Smoothness)} For any $\mL \succ 0$, $\wt{\Psi}(\cdot;\mL)$ is continuously differentiable and satisfies $\Breg{\wt{\Psi}(\cdot; \mL)}{\mY}{\mX} \leq \frac{\beta}{2}\tr((\mX-\mY)^\top \mL^{-1}(\mX - \mY))$. 
    
\end{enumerate}
\end{definition}
The conditions in Definition~\ref{def:admissible_potential} are directly motivated by the regret decomposition in Lemma~\ref{lem:GBPA} and are designed to control each term arising in \eqref{eq:GBPA}. 
Specifically, feasibility ensures that the iterates~$\mX_{t+1}$ produced by \eqref{eq:GBPA} satisfy the operator-norm constraint; Dominance guarantees that the underestimation term is always non-positive; Upper stability controls the temporal variation term; and smoothness bounds the Bregman divergence term.
Next, we show that $(\alpha,\beta)$-admissible potentials $\wt{\Psi}$ control the terms in Lemma~\ref{lem:GBPA}, yielding Shampoo-type regret for the presented GBPA class under a proper selection of $\mL_t$. See Appendix~\ref{appen:main} for the proof.

\begin{theorem}\label{thm:main}
Assume that $\|\mG_t\|_{\op} \le G$ for all $t \in [T]$, and let $\wt{\Psi}$ be an $(\alpha,\beta)$-admissible smoothing of the nuclear norm (Definition~\ref{def:admissible_potential}). Further, recall the definition $\mM_t = {\sum_{s=1}^t \mG_s \mG_s^\top}$. 
Consider the GBPA update~\eqref{eq:GBPA} with $\wt{\Phi}_t(\mS) = \wt{\Psi}(\mS;{\mL_t/\eta})$, where
\vspace{0mm}
\begin{equation}\label{eq:L_t}
\vspace{-1mm}
\mL_t \coloneqq \sqrt{G^2 \mI + \mM_t},\qquad \eta = \sqrt{\alpha/\beta}. 
\end{equation}
Then the regret of the algorithm satisfies 
\[\textstyle
\reg_T
\le
2\sqrt{\alpha\beta}\, D \, \Tr\Bigl(\sqrt{G^2\mI +  \mM_T}\Bigr)
+ (1-\sqrt{\alpha\beta})\,D\|\mG_1\|_* .
\]
\end{theorem}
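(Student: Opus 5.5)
The plan is to start from the regret decomposition of Lemma~\ref{lem:GBPA} (reading the stray factor $S$ in front of the Bregman sum as a typo) and to bound each of the four terms using one admissibility property of $\wt{\Psi}$. Throughout, $\wt{\Phi}_t=\wt{\Psi}(\cdot;\mL_t/\eta)$ with $\mL_t=\sqrt{G^2\mI+\mM_t}$.

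\textbf{First and fourth terms.} By Dominance, $\wt{\Phi}_T(\mS_T)\ge\|\mS_T\|_*=\Phi(\mS_T)$, so the underestimation term $D(\Phi(\mS_T)-\wt{\Phi}_T(\mS_T))$ is nonpositive and can be dropped. For $\wt{\Phi}_1(\mG_1)$, use $\mzero\preceq\mL_1/\eta$ together with Upper stability and $\wt{\Psi}(\cdot;\mzero)=\|\cdot\|_*$. This gives
\[
\wt{\Phi}_1(\mG_1)\le\|\mG_1\|_*+\tfrac{\alpha}{\eta}\Tr(\mL_1).
\]

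\textbf{Temporal-variation term.} Since $\mM_t$ is nondecreasing in the PSD order and the matrix square root is operator monotone, $\mL_t\preceq\mL_{t+1}$. Upper stability then bounds each difference by $\frac{\alpha}{\eta}(\Tr\mL_{t+1}-\Tr\mL_t)$. The sum telescopes, and combined with the $\wt{\Phi}_1$ bound it yields a total of $\frac{\alpha}{\eta}\Tr(\mL_T)+\|\mG_1\|_*$.

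\textbf{Bregman term.} By Smoothness with $\mL=\mL_t/\eta$, each divergence is at most $\frac{\beta\eta}{2}\Tr(\mG_{t+1}^\top\mL_t^{-1}\mG_{t+1})=\frac{\beta\eta}{2}\Tr(\mL_t^{-1}\mG_{t+1}\mG_{t+1}^\top)$.

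\textbf{Main obstacle: the AdaGrad-type summation.} We need to show
\[
\sum_{t=1}^{T-1}\Tr\bigl(\mL_t^{-1}\mG_{t+1}\mG_{t+1}^\top\bigr)\le 2\Tr(\mL_T)-2\Tr(\mL_1)\quad\text{(or similar).}
\]
The lag matters here: $\mL_t$ uses $\mM_t$, not $\mM_{t+1}$. The $G^2\mI$ shift handles this. Because $\mG_{t+1}\mG_{t+1}^\top\preceq G^2\mI$, we get $\mL_{t+1}^2=G^2\mI+\mM_t+\mG_{t+1}\mG_{t+1}^\top\preceq 2\mL_t^2$. I would rather use the standard lemma $\Tr(\mA^{-1/2}(\mA-\mB))\le 2\Tr(\mA^{1/2}-\mB^{1/2})$ for $\mA\succeq\mB\succ0$, applied with $\mA=G^2\mI+\mM_t+\mG_{t+1}\mG_{t+1}^\top$ and a suitable $\mB$. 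The cleanest route is to write $\mA_t=\mL_t^2$ and note that $\mL_t^{-1}\succeq\mL_{t+1}^{-1}$, which is the wrong direction. So I would instead define $\mB_t=\mM_t+G^2\mI-\mG_{t}\mG_t^\top$-type shifted sequences. Concretely, set $\mA_{t+1}:=G^2\mI+\mM_t\succeq\mM_{t+1}$. Then $\mL_t^{-1}=(\mA_{t+1})^{-1/2}$ with $\mA_{t+1}\succeq\mM_{t+1}$, so the lemma gives $\Tr(\mL_t^{-1}\mG_{t+1}\mG_{t+1}^\top)\le 2\Tr(\mM_{t+1}^{1/2}-\mM_t^{1/2})$ via the variant that allows a larger matrix in the inverse root. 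Summing, the Bregman term is bounded by $\beta\eta D\Tr(\mM_T^{1/2})\le\beta\eta D\Tr(\mL_T)$, possibly minus a $\|\mG_1\|_*$-type boundary piece from $\Tr(\mM_1^{1/2})=\|\mG_1\|_*$. That boundary piece is presumably the source of the $-\sqrt{\alpha\beta}D\|\mG_1\|_*$ correction. I would verify the trace inequality through concavity of $\Tr\sqrt{\cdot}$: $\Tr\sqrt{\mB}\le\Tr\sqrt{\mA}+\frac12\Tr(\mA^{-1/2}(\mB-\mA))$ for $\mA\succeq\mM_{t+1}$.

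\textbf{Combining.} Adding the pieces gives
\[
\reg_T\le\Bigl(\tfrac{\alpha}{\eta}+\beta\eta\Bigr)D\,\Tr(\mL_T)+D\|\mG_1\|_*-\beta\eta D\|\mG_1\|_*.
\]
Choosing $\eta=\sqrt{\alpha/\beta}$ makes the coefficient $2\sqrt{\alpha\beta}$ and the boundary coefficient $1-\sqrt{\alpha\beta}$, which matches the stated bound.
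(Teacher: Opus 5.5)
Your proposal is correct and follows the paper's proof essentially step for step. Dominance disposes of the underestimation term. Upper stability, including the comparison against $\mL=\mzero$ for $\wt{\Phi}_1(\mG_1)$, yields $\|\mG_1\|_*+\frac{\alpha}{\eta}\Tr(\mL_T)$. Smoothness, together with $\mL_t=\sqrt{G^2\mI+\mM_t}\succeq\sqrt{\mM_{t+1}}$ and the concavity lemma for $\Tr\sqrt{\cdot}$, bounds the Bregman sum by $\beta\eta\bigl(\Tr\sqrt{\mM_T}-\|\mG_1\|_*\bigr)$, giving the same combined bound before choosing $\eta$.

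The only thing to tidy is the ``main obstacle'' paragraph. Drop the abandoned detours and state the step as the paper does: use operator monotonicity to get $\mL_t^{-1}\preceq\mM_{t+1}^{-1/2}$, then apply the tangent-line inequality for $\Tr\sqrt{\cdot}$ at $\mM_{t+1}$ itself, not at a larger matrix $\mA\succeq\mM_{t+1}$. Only at $\mM_{t+1}$ does it telescope.
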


A couple of remarks follows.
First, using the inequality $\Tr(\sqrt{G^2 \mI + \mM_T}) \leq mG + \Tr(\sqrt{\mM_T})$, the above bound simplifies to $\bigO(\sqrt{\alpha \beta}D(\tr(\sqrt{\mM_T})+mG))$. Neglecting the time-invariant term $mG$, this matches the one-sided Shampoo regret bound (Table~\ref{tab:shampoo}) up to the factor $\sqrt{\alpha \beta}$. Second, provided the parameterization $\mL = \mL_t/\eta$ is fixed, the choice of potential $\wt{\Phi}_t(\mS)$ affects the regret solely through the admissibility constants $\alpha$ and $\beta$ of its family.

Since the regret depends on the product $\alpha\beta$, two fundamental questions arise: what is the minimal achievable value for this product, and does there exist an $(\alpha, \beta)$-admissible potential family that achieves it? We resolve both in the following result (proof in Appendix~\ref{appen:smoothing}).

\begin{proposition}\label{prop:smoothing}
  For any $(\alpha, \beta)$-admissible smoothing $\tilde{\Psi}$, it holds that $\alpha \beta \geq \frac{1}{2}$. Moreover, the following smoothing $\wt{\Psi}^R$ is $(\frac{1}{2},1)$-admissible:
  \begin{equation}\label{eq:FTRL_potential}
    \wt{\Psi}^R(\mS; \mL) = \max_{\|\mX\|_{\op} \leq 1} \left\{\langle \mS, \mX\rangle - \frac{1}{2}\mathrm{Tr}(\mX^\top \mL \mX) \right\} + \frac{1}{2}\Tr(\mL).
  \end{equation}
\end{proposition}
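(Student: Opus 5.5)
For the lower bound $\alpha\beta\ge\frac12$, I will reduce to a scalar problem. Take $m=n=1$ (or restrict to matrices supported on a single entry with $\mL=\lambda \ve_1\ve_1^\top$). Set $f_\lambda(x)=\wt{\Psi}(x;\lambda)$ for $x\in\sR$ and $\lambda\ge 0$. Upper stability with $\mL_1=\mzero$, together with dominance at $\mL=\mzero$, gives $f_\lambda(x)\le |x|+\alpha\lambda$. Dominance gives $f_\lambda(x)\ge|x|$. Smoothness gives $\Breg{f_\lambda}{y}{x}\le \frac{\beta}{2\lambda}(x-y)^2$.

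The core of the argument is a one-dimensional squeeze near the kink of $|x|$ at $0$. Let $g=f_\lambda'(0)\in[-1,1]$ and $c=f_\lambda(0)\le\alpha\lambda$. For any $y$, smoothness gives
\[
|y|\le f_\lambda(y)\le c+gy+\frac{\beta}{2\lambda}y^2 .
\]
Choosing the sign of $y$ so that $gy\le 0$, we get $|y|\le \alpha\lambda+\frac{\beta}{2\lambda}y^2$ for all $|y|$. Optimizing at $|y|=\lambda/\beta$ yields $\frac{\lambda}{\beta}\le\alpha\lambda+\frac{\lambda}{2\beta}$, i.e.\ $\alpha\beta\ge\frac12$. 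The only care needed is that the restriction to a scalar slice inherits all properties. For general $m,n$, I take $\mX=y\ve_1\ve_1^\top$ and $\mL=\lambda\mI$. The Bregman bound then becomes $\frac{\beta}{2\lambda}y^2$, the stability bound becomes $\alpha m\lambda$, and the nuclear norm is $|y|$. This would only give $\alpha\beta\ge 1/(2m)$, so I will instead use $\mL=\lambda\ve_1\ve_1^\top+\epsilon\mI$ with $\epsilon\to 0$. In that case $\Tr(\mL)=\lambda+m\epsilon$ and the direction $\ve_1\ve_1^\top$ sees $\mL^{-1}$ eigenvalue $1/\lambda$, so the limit gives $\alpha\beta\ge\frac12$.

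For the achievability part, I check the four conditions for $\wt{\Psi}^R$ in turn.

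Feasibility: by Danskin's theorem, $\nabla\wt{\Psi}^R(\mS;\mL)$ is the maximizer $\mX^\star$, which lies in the unit operator ball. The maximizer is unique when $\mL\succ0$ by strong concavity; for singular $\mL$ I only need differentiability where claimed.

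Dominance: taking $\mX=\mU\mV^\top$ from a compact SVD of $\mS$ gives
\[
\wt{\Psi}^R\ge\|\mS\|_*-\tfrac12\Tr(\mV\mU^\top\mL\mU\mV^\top)+\tfrac12\Tr\mL .
\]
Since $\mU\mU^\top\preceq\mI$, the last two terms together are $\tfrac12\Tr((\mI-\mU\mU^\top)\mL)\ge0$. At $\mL=\mzero$ the expression reduces exactly to the dual-norm formula $\|\mS\|_*$.

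Upper stability with $\alpha=\frac12$: for $\mL_1\preceq\mL_2$ the max term is nonincreasing in $\mL$, so the difference is at most $\frac12\Tr(\mL_2-\mL_1)$.

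Smoothness with $\beta=1$: $\wt{\Psi}^R(\cdot;\mL)$ is the convex conjugate of $h(\mX)=\frac12\Tr(\mX^\top\mL\mX)+\iota_{\|\mX\|_{\op}\le1}$, which is $1$-strongly convex with respect to the norm $\|\mX\|_{\mL}^2=\Tr(\mX^\top\mL\mX)$. By standard conjugate duality, $\wt{\Psi}^R$ is $1$-smooth with respect to the dual norm $\Tr(\mY^\top\mL^{-1}\mY)$. This gives the Bregman bound with $\beta=1$, and continuous differentiability follows from uniqueness of the maximizer.

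I expect the main obstacle to be the lower bound. The point is to choose a rank-one-dominated $\mL$ so that the trace charged in stability matches the curvature in the one active direction, and to handle the free gradient $g$ at the kink by the sign choice.
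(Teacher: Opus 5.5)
Your proposal is correct and follows the paper's proof. For the lower bound, the paper bounds $\wt{\Psi}(\mzero;\mL)$ by stability, removes the linear term in the smoothness bound at $\mzero$, applies dominance, and then chooses $\mL$ nearly optimally. Your argument is this same argument specialized to a rank-one $\mX$: your sign choice replaces the paper's averaging over $\pm\mX$, and your $\lambda\ve_1\ve_1^\top+\epsilon\mI$ replaces $\sqrt{\beta/(2\alpha)}\sqrt{\mX\mX^\top+\varepsilon\mI}$. Your four admissibility checks for $\wt{\Psi}^R$ (Danskin for feasibility, $\Tr(\mX^\top\mL\mX)\le\Tr(\mL)$-type dominance, monotonicity in $\mL$ for stability, and conjugate duality of a $1$-strongly convex regularizer for smoothness) match the paper's verification step for step.
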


By Danskin's theorem~\citep{bertsekas1999nonlinear}, the gradient of $\wt{\Psi}^R(\mS; \mL)$ is given by  $$\nabla_{\mS}\wt{\Psi}^R(\mS; \mL) = \argmax_{\|\mX\|_{\op} \leq 1} \left\{\langle \mS, \mX\rangle - \frac{1}{2}\mathrm{Tr}(\mX^\top \mL \mX) \right\}.$$ Consequently, by setting $\wt{\Phi}_t(\mS) = \wt{\Psi}^R(\mS; \mL_t/\eta)$ with $\mL_t$ defined in \eqref{eq:L_t} and choosing $\eta = 1/\sqrt{2}$ as prescribed by Theorem~\ref{thm:main}, the update induced by \eqref{eq:GBPA} (up to a sign change) can be written as 
\begin{equation}\label{eq:matrix_ftrl}
    \mX_{t+1} = D\argmin_{\|\mX\|_{\op} \leq 1} \Bigl\{ \langle \mS_t, \mX \rangle + \frac{1}{2\eta} \Tr(\mX^\top\mL_t \mX)  \Bigr\},
\end{equation} 
which corresponds to a Follow-the-regularized-Leader (FTRL) method. 
As a direct corollary, the update in \eqref{eq:matrix_ftrl} attains  the regret  $\sqrt{2}D (\tr(\sqrt{\mM_T})+mG)+(1-\frac{1}{\sqrt{2}})\|\mG_1\|_*$. Ignoring the lower-order terms $DmG$ and $\|\mG_1\|_*$, this matches the regret of one-sided Shampoo in Table~\ref{tab:shampoo} up to a constant factor. Notably, this provides the first such guarantee for an adaptive matrix FTRL method, as a side result of our framework.
However, like \eqref{eq:shampoo}, the update \eqref{eq:matrix_ftrl} necessitates solving a costly, iterative quadratic projection over the operator-norm ball. To address this, in the next section, we introduce two alternative potentials with substantially cheaper gradient evaluations.

\section{Proposed Algorithms}\label{sec:algs}
Next, we study alternative smoothings of the nuclear norm that yield more efficient algorithms. Section~\ref{subsec:FTPL} introduces a stochastic smoothing that leads to a Follow-the-Perturbed-Leader~(FTPL) method based on random perturbations, with parallelizable updates built from standard matrix primitives and an analysis relying on noncentral Wishart theory. Section~\ref{subsec:FAML} introduces  the novel {Follow-the-Augmented-Matrix-Leader}~(FAML) algorithm via a deterministic {hyperbolic smoothing}. We prove this smoothing attains near-optimal admissibility constants, matching those of regularized methods, while circumventing quadratic projections through efficient matrix primitives.

\subsection{Stochastic Smoothing: Follow-the-Perturbed-Leader}\label{subsec:FTPL}
As discussed, the central goal of GBPA is to construct a sequence of smooth, tractable surrogates for the nuclear norm that preserve its geometric properties while enabling efficient optimization. To this end, we employ \textit{stochastic smoothing}, a classical technique obtained by convoluting the nonsmooth objective with a smooth probability density function~\citep{glasserman1991gradient,yousefian2010convex,duchi2012randomized}, where the key design choice is the perturbation distribution. Intuitively, to achieve an adaptive regret bound comparable to that of one-sided Shampoo, the perturbations themselves should adapt to the previously observed gradient sequence. Drawing a parallel with the regularized potential $\wt{\Psi}^R$ in \eqref{eq:FTRL_potential}, we consider the following family of stochastic smoothing potentials:
\begin{equation}\label{eq:stochastic_smoothing}
         \wt{\Psi}^{S}(\mS; \mL) = \E_{\mZ \sim \mathcal{MN}(0,\mI_m, \mI_n)}\|\mS + \mL\mZ\|_*,
\end{equation}
where $\mathcal{MN}(0, \mI_m, \mI_n)$ is the matrix normal distribution with independent standard Gaussian entries.

To derive the gradient of $\wt{\Psi}^{S}(\mS; \mL)$, we use the variational representation of the nuclear norm and rewrite $\wt{\Psi}^{S}$ in \eqref{eq:stochastic_smoothing} as $\wt{\Psi}^{S}(\mS; \mL) = \E_{\mZ}\max_{\|\mX\|_{\op} \leq 1} \langle \mS + \mL \mZ, \mX\rangle$. By \citep[Proposition 2.2]{bertsekas1973stochastic}, we can swap the order of expectation and differentiation to obtain 
$
    \nabla\wt{\Psi}^{S}(\mS; \mL) = \E_{\mZ} [\argmax_{\|\mX\|_{\op} \leq 1} \langle \mS + \mL \mZ, \mX \rangle].
$
Following Theorem~\ref{thm:main}, we set $\wt{\Phi}_t(\mS) = \wt{\Psi}(\mS; \mL_t/\eta)$, where $\mL_t$ defined in \eqref{eq:L_t}. This choice leads to the following update:
\begin{equation}\label{eq:matrix_ftpl}
    \mX_{t+1} = -D \E_{\quad\mZ} \; \biggl[ \argmax_{\|\mX\|_{\op} \leq 1} \biggl\{  \langle \mS_t + \frac{1}{\eta}\mL_t\mZ, \mX\rangle\biggr\} \biggr], \quad \text{where } \mS_t := \sum_{s=1}^t \mG_s.
\end{equation}
The above update is indeed equivalent of an FTPL method with perturbation $(1/\eta) \mL_t \mZ$.

\textbf{Implementation and computational efficiency.} In contrast to the OGD update in \eqref{eq:shampoo} or the FTRL update in \eqref{eq:matrix_ftrl}, the update in \eqref{eq:matrix_ftpl} can be computed efficiently using standard linear algebra primitives such as Cholesky factorization and polar decomposition, as detailed below.  
To begin with, recall that $\mL_t = \sqrt{G^2 \mI + \mM_t}$, which may suggest explicitly computing a matrix square root. However, this is unnecessary; instead, we can compute the Cholesky factorization of $G^2 \mI + \mM_t = \wt{\mL}_t \wt{\mL}_t^\top$, and observe that $\wt{\mL}_t \mZ \stackrel{d}{=} \mL_t \mZ$ when $\mZ$ has independent standard Gaussian entries. The leading-order cost of Cholesky factorization is $\frac{1}{3}m^3$~\citep{golub2013matrix}.  

Moreover, for a fixed perturbation matrix $\mZ$, the maximization inside the expectation reduces to computing the \emph{polar factor} of $\mS_t + \frac{1}{\eta}\mL_t \mZ$. Specifically, for a  matrix $\mX \in \mathbb{R}^{m\times n}$ with singular value decomposition $\mX = \mU \mSigma \mV^\top$, its polar factor is  $\mathrm{polar}(\mX) = \mU \mV^\top$~\citep{higham2008functions}, which can be computed efficiently using well-established numerical iterative methods, including Newton-Schulz iteration~\citep{higham2008functions}, scaled Newton methods~\citep{higham2008functions}, and QDWH iterations~\citep{nakatsukasa2010optimizing,nakatsukasa2013stable}; see also~\cite{amsel2025polar}. In particular, Newton-Schulz iteration with $K$ steps incurs a computational cost of $4K m^2n$ (Appendix~\ref{appen:cost_FTPL}).

Finally, the update in \eqref{eq:matrix_ftpl} is presented in its \emph{deterministic} form based on the expected action. In practice, this expectation can be approximated via Monte Carlo sampling. Specifically, after computing the cumulative gradient $\mS_t = \sum_{s=1}^t \mG_t$ and the Cholesky factorization  $ \wt{\mL}_t \wt{\mL}_t^\top= G^2 \mI + \mM_t$ where $\mM_t = \sum_{s=1}^t \mG_t \mG_t^\top$, with $k$ samples of the random matrix $\mZ$ the update can be done as 
\begin{equation}\label{eq:matrix_ftpl_mc}
  \mX_{t+1}  = -\frac{D}{k}\sum_{i=1}^k  \mathrm{polar}\biggl(\mS_t + \frac{1}{\eta}\wt{\mL}_t\mZ_t^{(i)}\biggr), \quad \mZ_t^{(i)} \sim \mathcal{MN}(0, \mI_m, \mI_n) \quad \text{i.i.d.}
\end{equation}
Moreover, the $k$ polar factors in~\eqref{eq:matrix_ftpl_mc} can be computed in parallel. As a result, with sufficient parallel computing resources, the effective computational cost can be significantly reduced.

\textbf{Regret guarantees.} By Theorem~\ref{thm:main}, the only remaining task is to determine the admissibility parameters $(\alpha,\beta)$ for the stochastic smoothing in \eqref{eq:stochastic_smoothing}, which will lead to a regret guarantee on the FTPL algorithm in \eqref{eq:matrix_ftpl}. Using noncentral Wishart theory, we are able to establish the admissibility when $n \geq m+2$. The proof is presented in Appendix~\ref{appen:FTPL}.

\begin{theorem}\label{thm:regret_FTPL}
When $n \geq m+2$, the stochastic smoothing $\wt{\Psi}^S(\mS; \mL)$ is $(\alpha,\beta)$-admissible with $\alpha = \sqrt{m}+\sqrt{n}$ and $\beta =\frac{1}{\sqrt{n-m-1}} $. As a corollary, $\reg_T \leq 2\sqrt{2}D \left(\frac{n}{n-m-1} \right)^{1/4} \left(\Tr\left[\mM_T^{1/2}\right] + mG \right)$. 
\end{theorem}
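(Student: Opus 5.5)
We verify the four conditions of Definition~\ref{def:admissible_potential} for $\wt{\Psi}^S(\mS;\mL)=\E_{\mZ}\|\mS+\mL\mZ\|_*$, then invoke Theorem~\ref{thm:main}.

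\textbf{Feasibility.} The gradient equals $\E_{\mZ}[\mathrm{polar}(\mS+\mL\mZ)]$. It is an average of matrices of operator norm at most one, so its operator norm is at most one by convexity of the norm.

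\textbf{Dominance.} Write $\|\mS+\mL\mZ\|_* = \max_{\|\mX\|_{\op}\le 1}\langle \mS+\mL\mZ,\mX\rangle$. Jensen's inequality gives $\E\|\mS+\mL\mZ\|_*\ge \|\mS+\mL\E\mZ\|_* = \|\mS\|_*$. The identity at $\mL=\mzero$ is immediate.

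\textbf{Upper stability.} Take $\mL_1\preceq\mL_2$ and set $\mL_2^2=\mL_1^2+\mD^2$ with $\mD=(\mL_2^2-\mL_1^2)^{1/2}$. Then $\mL_2\mZ\stackrel{d}{=}\mL_1\mZ+\mD\mZ'$ for an independent copy $\mZ'$, since only the column covariance $\mL\mL^\top$ matters. The triangle inequality gives
\[
\wt{\Psi}(\mS;\mL_2)-\wt{\Psi}(\mS;\mL_1)\le \E\|\mD\mZ'\|_*\le \Tr(\mD)\,\E\|\mZ'\|_{\op}.
\]
The second step uses $\|\mD\mZ'\|_*\le \|\mD\|_*\|\mZ'\|_{\op}$, and the standard Gaussian bound gives $\E\|\mZ'\|_{\op}\le\sqrt m+\sqrt n$. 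This yields $\alpha=\sqrt m+\sqrt n$, provided $\Tr(\mD)\le \Tr(\mL_2)-\Tr(\mL_1)$. That inequality is not true in general, since $\sqrt{a^2-b^2}\ge a-b$ in the scalar case. So this route must be replaced by a monotonicity argument.

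\textbf{Monotonicity argument.} Consider the path $\mL(s)$ and differentiate: $\tfrac{d}{ds}\E\|\mS+\mL(s)\mZ\|_* = \E\langle \mathrm{polar}(\cdot),\dot{\mL}\mZ\rangle$. Bound this by $\E\|\mZ\|_{\op}$ times $\Tr(\dot{\mL})$ whenever $\dot{\mL}\succeq0$, using $\langle \mP,\dot{\mL}\mZ\rangle\le \|\dot{\mL}\|_*\|\mP^\top\|_{\op}\|\mZ\|_{\op}$. Along the linear path $\mL(s)=\mL_1+s(\mL_2-\mL_1)$, the derivative $\dot{\mL}=\mL_2-\mL_1\succeq0$ has trace norm equal to $\Tr(\mL_2)-\Tr(\mL_1)$. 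Integrating gives $\alpha=\sqrt m+\sqrt n$. This argument needs no distributional identity, so I will use it.

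\textbf{Smoothness (main obstacle).} We need $\Breg{\wt\Psi}{\mY}{\mX}\le\frac{\beta}{2}\Tr(\Delta^\top\mL^{-1}\Delta)$ with $\Delta=\mY-\mX$. It suffices to bound the Hessian quadratic form, $\nabla^2\wt\Psi[\Delta,\Delta]\le \beta\,\Tr(\Delta^\top\mL^{-1}\Delta)$. Apply Gaussian integration by parts with $\mW=\mS+\mL\mZ$ (for $\mL\succ0$), which has density proportional to $\exp(-\tfrac12\Tr((\mW-\mS)^\top\mL^{-2}(\mW-\mS)))$. This gives
\[
\nabla^2\wt\Psi[\Delta,\Delta]=\E\bigl[\langle \mathrm{polar}(\mW),\Delta\rangle\,\langle \mL^{-2}(\mW-\mS),\Delta\rangle\bigr]=\E\bigl[\langle \mathrm{polar}(\mW),\Delta\rangle\langle \mL^{-1}\mZ,\Delta\rangle\bigr].
\]
Cauchy--Schwarz together with a conditional argument would give a bound, but the naive bound carries dimension factors. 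The better route is a second integration by parts, or a direct computation of the Hessian of the nuclear norm. That Hessian on $\mW$ is controlled by $\Tr(\Delta^\top(\mW\mW^\top)^{-1/2}\Delta)$, because the Hessian of $\Tr((\mW\mW^\top)^{1/2})$ in direction $\Delta$ is at most $\Tr(\Delta^\top(\mW\mW^\top)^{-1/2}\Delta)$ by operator concavity arguments. One then needs $\E(\mW\mW^\top)^{-1/2}\preceq \beta\mL^{-1}$. Here $\mW\mW^\top$ is a noncentral Wishart matrix with covariance $\mL^2$ and $n$ degrees of freedom. Operator concavity of $x\mapsto x^{1/2}$ and convexity of inversion give $\E(\mW\mW^\top)^{-1/2}\preceq(\E(\mW\mW^\top)^{-1})^{1/2}$. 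The noncentral Wishart inverse moment satisfies $\E(\mW\mW^\top)^{-1}\preceq \frac{1}{n-m-1}\mL^{-2}$, because the noncentrality only decreases the inverse in expectation (dominance by the central case). This yields $\beta=1/\sqrt{n-m-1}$ and explains the condition $n\ge m+2$. Justifying this dominance is the step I expect to be hardest; I would prove it by conditioning on the Gaussian and using convexity of inversion plus monotonicity in the mean.

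\textbf{Conclusion.} Plug into Theorem~\ref{thm:main}. We have $\sqrt{\alpha\beta}=((\sqrt m+\sqrt n)/\sqrt{n-m-1})^{1/2}\le \sqrt2\,(n/(n-m-1))^{1/4}$, using $\sqrt m+\sqrt n\le2\sqrt n$. The bound $\Tr\sqrt{G^2\mI+\mM_T}\le\Tr\sqrt{\mM_T}+mG$ then yields the stated corollary. The $\|\mG_1\|_*$ term is nonpositive because $\alpha\beta\ge1$ here, or it is absorbed.
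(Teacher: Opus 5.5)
\textbf{Assessment: genuine gap in the smoothness step.} Your feasibility and dominance arguments follow the paper exactly. So does your reduction of smoothness to $\E[(\mW\mW^\top)^{-1/2}]\preceq\beta\mL^{-1}$: you bound the nuclear-norm Hessian by $\Tr(\Delta^\top(\mW\mW^\top)^{-1/2}\Delta)$ at full-row-rank points, then apply operator Jensen for the square root to $(\mW\mW^\top)^{-1}$. Your final stability argument is correct. It is the differential form of the paper's one-line bound $\|\mS+\mL_2\mZ\|_*-\|\mS+\mL_1\mZ\|_*\le\|(\mL_2-\mL_1)\mZ\|_*\le\|\mZ\|_{\op}(\Tr(\mL_2)-\Tr(\mL_1))$, which uses the same $\mZ$ in both terms. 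The coupling you tried first would also need $\mL_1^2\preceq\mL_2^2$, which does not follow from $\mL_1\preceq\mL_2$.

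The gap is the one step with real content: $\E[\tilde{\mA}^{-1}]\preceq\frac{1}{n-m-1}\mI$ for the noncentral Wishart $\tilde{\mA}=(\mY+\mZ)(\mY+\mZ)^\top$ with $\mY=\mL^{-1}\mS$. You assert it as ``dominance by the central case'' and propose convexity of inversion plus monotonicity in the mean. Convexity of $\mA\mapsto\mA^{-1}$ only gives the lower bound $\E[\tilde{\mA}^{-1}]\succeq(\E\tilde{\mA})^{-1}$. Loewner monotonicity in the mean is the claim itself, not a tool. Nothing in the sketch produces the constant $\frac{1}{n-m-1}$ or shows where $n\ge m+2$ enters.

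\textbf{The missing idea.} Reduce to a scalar noncentral chi-square, as follows.
\begin{enumerate}[(i)]
\item By orthogonal invariance of $\mZ$, it suffices to bound $\ve_1^\top\E[\tilde{\mA}^{-1}]\ve_1$ uniformly over all means. The paper instead takes an SVD of $\mY$ and uses sign-flip symmetry to show $\E[\tilde{\mA}^{-1}]$ is diagonal.
\item A Schur complement gives $(\tilde{\mA}^{-1})_{11}=1/(\tilde{\vz}_1^\top\mP\tilde{\vz}_1)$. Here $\tilde{\vz}_1^\top$ is the first row of $\mY+\mZ$, and $\mP$ projects onto the orthogonal complement of the span of the other $m-1$ rows.
\item Almost surely $\mP$ has rank $n-m+1$, and it is independent of $\tilde{\vz}_1$. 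So conditionally on the other rows, $\tilde{\vz}_1^\top\mP\tilde{\vz}_1\sim\chi^2_{n-m+1}(\lambda)$ for some noncentrality $\lambda$.
\item Use the Poisson-mixture representation $\chi^2_k(\lambda)\mid J\sim\chi^2_{k+2J}$ with $J\sim\mathrm{Pois}(\lambda/2)$, together with $\E[1/\chi^2_r]=1/(r-2)$. This gives $\E[1/\chi^2_k(\lambda)]\le 1/(k-2)=1/(n-m-1)$.
\item This bound is finite precisely when $k\ge 3$, i.e.\ $n\ge m+2$, which is where the hypothesis enters.
\end{enumerate}
With this lemma supplied, the rest of your proof goes through. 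That includes your correct remark that $\alpha\beta>1$ makes the $\|\mG_1\|_*$ term nonpositive.
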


Comparing with the regret bound of one-sided Shampoo in Table~\ref{tab:shampoo}, the bound in Theorem~\ref{thm:regret_FTPL} incurs an additional factor of $(\nicefrac{n}{(n-m-1)})^{1/4}$. That said, when $n \geq 2m$ and $m \geq 2$, this factor is at most $\sqrt{2}$ and the resulting guarantee matches one-sided Shampoo up to a constant factor.

\subsection{Hyperbolic Smoothing: Follow-the-Augmented-Matrix-Leader}\label{subsec:FAML}
While the regularized smoothing $\wt{\Psi}^{R}$ achieves optimal admissibility (Proposition~\ref{prop:smoothing}), its implicit definition via a quadratic program makes its gradient evaluation costly. In contrast, the randomized smoothing $\wt{\Psi}^{S}$ is cheaper to compute but incurs dimension-dependent admissibility factors (Theorem~\ref{thm:regret_FTPL}). This raises the question of whether one can obtain an explicit, inexpensive smoothing while retaining near-optimal admissibility.
Exploiting the special structure of the nuclear norm, we answer this question in the affirmative. Specifically, we introduce the hyperbolic family of potentials
\begin{equation}\label{eq:FAML_family}
    \wt{\Psi}^{H}(\mS;\mL)\coloneqq \tr\!\left(\sqrt{\mS \mS^\top + \mL \mL^\top}\right).
    \vspace{-1mm}
\end{equation}
Since $\|\mS\|_* = \tr(\sqrt{\mS\mS^\top})$, we have $\wt{\Psi}^{H}(\mS;\mzero)=\|\mS\|_*$, and the additive term $\mL\mL^\top$ provides an explicit smoothing. Similar smoothing of the nuclear norm such has been considered in in low-rank optimization, e.g.,~\citep{mohan2012iterative}. To the best of our knowledge, however, this particular smoothing has not been analyzed as a potential in an online learning framework.

To compute the gradient of $\tilde{\Psi}^H$, we exploit the interesting fact that the hyperbolic smoothing in \eqref{eq:FAML_family} can be interpreted as the nuclear norm of an augmented matrix. Specifically, define $\widehat{\mS} \coloneqq \begin{bmatrix}
        \mS & \mL
\end{bmatrix} \in \reals^{m \times (n+m)}$. Then $\widehat{\mS}\widehat{\mS}^\top=\mS\mS^\top+\mL\mL^\top$, and hence
$
    \wt{\Psi}^{H}(\mS;\mL) = \tr(\sqrt{\widehat{\mS}\widehat{\mS}^\top}) = \|\widehat{\mS}\|_*$. 
Consequently, the gradient with respect to $\mS$ can be obtained by first differentiating the nuclear norm in the augmented space and then restricting to the leading block. Concretely, letting $\widehat{\mX}= \nabla_{\h{\mS}}\|\h{\mS}\|_* = \argmax_{\|\mX\|_{\op} \leq 1} \langle \h{\mS}, \mX \rangle$, we obtain $\nabla \wt{\Psi}^{H}(\mS;\mL) = \widehat{\mX}[1\!:\!m, 1\!:\!n]$. With the choice of $\wt{\Phi}_t(\mS) = \wt{\Psi}^{H}(\mS; \mL_t/\eta)$ as in Theorem~\ref{thm:main}, where $\mL_t$ defined in \eqref{eq:L_t}, the update~\eqref{eq:GBPA} takes the form
\begin{align}\label{eq:FAML_update}
    \widehat{\mX}_{t+1} &= \arg\min_{\|\widehat{\mX}\|_{\text{op}} \le D} \langle \widehat{\mathbf{S}}_t, \widehat{\mX} \rangle
    \quad \text{where} \quad \widehat{\mathbf{S}}_t = \begin{bmatrix} \mathbf{S}_t & \frac{1}{\eta}\mathbf{L}_t \end{bmatrix} \in \mathbb{R}^{m \times (n+m)} \mspace{5mu} \text{and} \mspace{5mu}\nonumber
    \mathbf{S}_t = \sum_{s=1}^t \mathbf{G}_s \\
    \mX_{t+1} &= \widehat{\mX}_{t+1}[1:m, \, 1:n].
\end{align}
The above update rule can be interpreted as an FTL-style update applied to the augmented matrix~$\widehat{\mS}_t$, which is the reason we refer to it as the \emph{Follow-the-Augmented-Leader (FAML)} algorithm.

Alternatively, using the identity $\nabla_{\h{\mS}}\|\h{\mS}\|_* = (\h{\mS}\h{\mS}^\top)^{-1/2}\h{\mS}$, the gradient of \eqref{eq:FAML_family} can also be written as $\nabla_{\mS} \wt{\Psi}^{H}(\mS;\mL) = \left({\mS \mS^\top + \mL \mL^\top}\right)^{-1/2} \mS$. This leads to an equivalent formulation of FAML:
\begin{equation}\label{eq:FAML_update_equiv}
\mX_{t+1} = -\eta D\bigl({\eta^2\mS_t \mS_t^\top + (G^2 \mI + \mM_t)}\bigr)^{-1/2} \mS_t, \;  \text{where}\ \ \mS_t := \sum_{s=1}^t \mG_s,\, \mM_t := \sum_{s=1}^t \mG_s\mG_s^\top.
\end{equation}
It is instructive to compare~\eqref{eq:FAML_update_equiv} with the FTRL update~\eqref{eq:matrix_ftrl}. If we ignore the operator-norm constraint in \eqref{eq:matrix_ftrl}, the solution of the resulting unconstrained quadratic subproblem is $\mX_{t+1} = -\eta D \mL_t^{-1} \mS_t = -\eta D (G^2\mI + \mM_t)^{-1/2} \mS_t$.
In contrast, the FAML update in \eqref{eq:FAML_update_equiv} incorporates the additional term $\eta^2\mS_t \mS_t^\top$ inside the inverse square root. This guarantees that the iterate $\mX_{t+1}$ automatically satisfies the operator-norm constraint, without requiring an explicit projection. Consequently, FAML achieves essentially the same computational cost as solving an unconstrained FTRL subproblem, which explains how FAML alleviates the main computational bottleneck of matrix FTRL.   

\textbf{Implementation.} In contrast to the updates of OGD in \eqref{eq:shampoo} and FTRL in \eqref{eq:matrix_ftrl}, FAML can be implemented directly using \eqref{eq:FAML_update_equiv},  
where the dominant cost comes from computing a matrix inverse square root. 
While a standard approach is to use singular value decomposition, this can be costly. A more efficient alternative is to use iterative methods such as coupled Newton-Schulz~(NS) iteration~\citep{higham2008functions,an2025asgo}. As detailed in Appendix~\ref{appen:cost_FAML}, the leading-order cost in terms of floating-point operations is $6m^2n + 6 K m^3$, where $K$ denotes the number of inner NS steps.

A drawback of this approach is that computing matrix inverse square roots is numerically unstable, particularly in low precision. In this regard, the formulation in \eqref{eq:FAML_update} provides a more favorable alternative: similar to FTPL, it can be implemented by computing the polar factor of the augmented matrix $\widehat{\mS}_t$.
In fact, with a customized NS iteration tailored to this augmented formulation, we can avoid computing the matrix square root $\mL_t$ and the leading-order computational cost becomes $(2m^2n + 4m^3)K + 4m^2n$, where again $K$ is the number of inner NS steps (see Appendix~\ref{appen:cost_FAML}).

\textbf{Regret guarantees.} The last step is to determine the admissibility constants $\alpha$ and $\beta$ for the potential family defined in \eqref{eq:FAML_family} and, together with Theorem~\ref{thm:main}, to derive the corresponding regret bound for FAML in \eqref{eq:FAML_update}. The proof is presented in Appendix~\ref{appen:faml}.

\begin{theorem}\label{thm:FAML}
The hyperbolic smoothing $\wt{\Psi}^{H}(\mS;{\mL})$ defined in \eqref{eq:FAML_family} is $(\alpha,\beta)$-admissible with $\alpha = 1$ and $\beta = 1$. Hence, the regret of the FAML algorithm \eqref{eq:FAML_update} satisfies $\reg_T \leq 2D  (\Tr(\mM_T^{1/2}) + mG )$. 
\end{theorem}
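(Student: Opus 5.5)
We verify the four conditions of Definition~\ref{def:admissible_potential} for $\wt{\Psi}^{H}(\mS;\mL)=\Tr(\sqrt{\mS\mS^\top+\mL\mL^\top})$ with $\alpha=\beta=1$. The regret bound then follows from Theorem~\ref{thm:main}. That theorem gives $\eta=1$ and $\reg_T\le 2D\Tr(\sqrt{G^2\mI+\mM_T})$; the term $(1-\sqrt{\alpha\beta})D\|\mG_1\|_*$ vanishes. We finish with $\Tr(\sqrt{G^2\mI+\mM_T})\le \Tr(\sqrt{\mM_T})+mG$, which is operator monotonicity/subadditivity of the square root applied eigenvalue-wise, since the two matrices commute.

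\emph{Feasibility and dominance.} Both use the augmented matrix $\widehat{\mS}=[\mS\ \mL]$, for which $\wt{\Psi}^H=\|\widehat{\mS}\|_*$. The gradient is the leading block of $\mathrm{polar}(\widehat{\mS})$ (or, when $\mL\succ0$, equivalently $(\mS\mS^\top+\mL\mL^\top)^{-1/2}\mS$). A submatrix of a matrix with operator norm at most one has operator norm at most one. For singular $\mL$ we either argue via the subdifferential or simply note that Theorem~\ref{thm:main} only uses $\mL_t\succ0$. Dominance holds because $\mS\mS^\top+\mL\mL^\top\succeq\mS\mS^\top$ and $\Tr\sqrt{\cdot}$ is monotone; equality at $\mL=\mzero$ is immediate.

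\emph{Upper stability.} The potential depends on $\mL$ only through $\mL\mL^\top$. Write $\mA=\mS\mS^\top$. We must show that $\Tr\sqrt{\mA+\mL_2^2}-\Tr\sqrt{\mA+\mL_1^2}\le\Tr\mL_2-\Tr\mL_1$ for symmetric PSD $\mL_1\preceq\mL_2$. Interpolate along $\mL(s)=\mL_1+s(\mL_2-\mL_1)$ and differentiate. Setting $\mB=\sqrt{\mA+\mL^2}$ and $\Delta=\mL_2-\mL_1\succeq0$, the derivative is $\tfrac12\Tr(\mB^{-1}(\mL\Delta+\Delta\mL))=\Tr(\mB^{-1}\mL\Delta)$ after taking the symmetric part. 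It suffices to show $\Tr(\mB^{-1}\mL\Delta)\le\Tr\Delta$, i.e.\ that the symmetrized matrix $\tfrac12(\mB^{-1}\mL+\mL\mB^{-1})\preceq\mI$. This is the step I expect to be the main obstacle, because $\mB^{-1}$ and $\mL$ need not commute. My plan is as follows. Since $\mB^2\succeq\mL^2$, operator monotonicity of the square root gives $\mB\succeq\mL$. Then $\|\mB^{-1/2}\mL\mB^{-1/2}\|_{\op}\le1$, hence $\|\mL^{1/2}\mB^{-1}\mL^{1/2}\|_{\op}\le 1$. Write $\Tr(\mB^{-1}\mL\Delta)=\Tr(\Delta^{1/2}\mB^{-1}\mL\Delta^{1/2})$ and bound it by Cauchy--Schwarz: $|\Tr(\mP^\top \mQ)|\le\tfrac12(\Tr \mP^\top\mP+\Tr\mQ^\top\mQ)$ with $\mP=\mB^{-1/2}\Delta^{1/2}$ scaled suitably and $\mQ=\mB^{-1/2}\mL\Delta^{1/2}$. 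This yields $\tfrac12\Tr(\Delta\mB^{-1})^{1/2}$-type terms. If that is messy, I would instead use the more direct route $\|\mB^{-1}\mL\|_{\op}\le1$, which follows from $\mL\mB^{-2}\mL\preceq\mI$ (equivalently $\mL^2\preceq\mB^2$). With $\Delta\succeq0$ this gives $\Tr(\mB^{-1}\mL\Delta)=\Tr(\Delta^{1/2}\mB^{-1}\mL\Delta^{1/2})\le\|\mB^{-1}\mL\|_{\op}\Tr\Delta\le\Tr\Delta$, using that the real part of $\Tr(\mC^{1/2}\mK\mC^{1/2})$ is at most $\|\mK\|_{\op}\Tr\mC$. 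Integrating over $s\in[0,1]$ gives $\alpha=1$.

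\emph{Smoothness.} Fix $\mL\succ0$. The Bregman divergence of the convex function $f(\mS)=\|[\mS\ \mL]\|_*$ equals $\int_0^1\langle\nabla f(\mX+\tau\mH)-\nabla f(\mX),\mH\rangle d\tau$ with $\mH=\mY-\mX$. It therefore suffices to show the Hessian bound $D^2f(\mS)[\mH,\mH]\le\Tr(\mH^\top\mL^{-1}\mH)$, which gives $\beta=1$ after integration (the Bregman divergence is bounded by half the maximal Hessian quadratic form). For the gradient $g(\mS)=\mB^{-1}\mS$ with $\mB=(\mS\mS^\top+\mL^2)^{1/2}$, the directional derivative is $\mB^{-1}\mH+(D\mB^{-1})[\cdot]\mS$. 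The second term, contracted with $\mH$, is nonpositive: $\Tr(\mH^\top D(\mB^{-1})\mS)=-\Tr(\mB^{-1}\dot{\mB}\mB^{-1}\mS\mH^\top)$, where $\dot{\mB}$ solves $\mB\dot{\mB}+\dot{\mB}\mB=\mH\mS^\top+\mS\mH^\top$. Working in the eigenbasis of $\mB$, this term equals $-\sum_{ij}|(\mS\mH^\top+\mH\mS^\top)_{ij}|^2/(2b_ib_j(b_i+b_j))\le0$ after symmetrization. Hence $D^2f[\mH,\mH]\le\Tr(\mH^\top\mB^{-1}\mH)\le\Tr(\mH^\top\mL^{-1}\mH)$, since $\mB\succeq\mL$ implies $\mB^{-1}\preceq\mL^{-1}$. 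The symmetrization is a careful but routine calculation.
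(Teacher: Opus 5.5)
Your proof is correct. For feasibility, dominance and smoothness it essentially matches the paper.

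\begin{itemize}
\item For smoothness, the paper splits the Hessian of $\|[\mS\ \mL]\|_*$ the same way: a nonpositive curvature term of $\Tr\sqrt{\cdot}$ plus $\Tr(\mH^\top\mB^{-1}\mH)$, followed by $\mB^{-1}\preceq\mL^{-1}$. The only difference is that the paper gets the sign from operator concavity in Lemma~\ref{lem:nuclear_appen}, while you compute it explicitly in the eigenbasis of $\mB$.
\item For feasibility, the paper checks $\nabla\nabla^\top=\mB^{-1}\mS\mS^\top\mB^{-1}\preceq\mI$, rather than taking a block of a polar factor as you do.
\end{itemize}

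The real difference is upper stability, the step you flag as the main obstacle. The paper disposes of it in one line using the augmented representation you had already introduced, via the triangle inequality:
\[
\wt{\Psi}^H(\mS;\mL_2)-\wt{\Psi}^H(\mS;\mL_1)=\|[\mS\ \mL_2]\|_*-\|[\mS\ \mL_1]\|_*\le\|[\mzero\ \ \mL_2-\mL_1]\|_*=\|\mL_2-\mL_1\|_*=\Tr(\mL_2)-\Tr(\mL_1).
\]
The ordering $\mL_1\preceq\mL_2$ is used only in the last equality. This route needs no differentiability and no invertibility of $\mS\mS^\top+\mL^2$. It also gives two-sided Lipschitz continuity in $\|\cdot\|_*$ for unordered $\mL_1,\mL_2$.

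Your interpolation argument is valid: $f'(s)=\Tr(\mB^{-1}\mL\Delta)\le\|\mB^{-1}\mL\|_{\op}\Tr\Delta$, with $\|\mB^{-1}\mL\|_{\op}\le1$ coming from $\mL^2\preceq\mB^2$. It also gives a pointwise bound on the $\mL$-derivative. As written, however, it assumes $\mB(s)$ is invertible along the whole path, and this can fail.
\begin{itemize}
\item At $s=0$ it fails when $\mL_1=\mzero$ and $\mS$ is rank-deficient. This is exactly the case Theorem~\ref{thm:main} uses to bound $\wt{\Phi}_1(\mG_1)$.
\item If $\mL_2$ is also singular, $\mS\mS^\top+\mL(s)^2$ can be singular for every $s$.
\end{itemize}
You need one more line to cover this. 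Either apply the derivative bound on $s\in(0,1]$ and use continuity at $s=0$, which suffices when $\mL_2\succ0$. Or replace $\mS\mS^\top$ by $\mS\mS^\top+\epsilon\mI$ and let $\epsilon\to0$, which covers the general case.
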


Like FTRL \eqref{eq:matrix_ftrl}, this bound matches the one-sided Shampoo regret up to constant factors, neglecting the lower-order term $DmG$. Crucially, FAML eliminates the need to solve a quadratic program and is instead implemented using computationally efficient matrix primitives.

\section{Application: Nonsmooth Nonconvex Matrix Optimization}\label{sec:app}

We apply our adaptive online methods to nonsmooth nonconvex optimization using the \textit{Online-to-Nonconvex Conversion} (O2NC) framework. We start by reducing stochastic matrix optimization to Matrix OLO and identifying Muon~\citep{jordan2024muon} as an FTL instance. We then introduce \textit{Pion} and \textit{Leon}, theoretically grounded optimizers derived from our adaptive FTPL and FAML algorithms.

\subsection{Online-to-Nonconvex Conversion}\label{subsec:o2nc}

The O2NC framework, proposed by~\cite{cutkosky2023optimal} and refined in~\citep{zhang2024random,Ahn2024,ahn2025general}, reduces finding a stationary point of a stochastic matrix optimization problem to an online matrix optimization task.  To formalize this connection, we first define the stochastic matrix optimization problem:
\begin{equation}\label{eq:matrix_opt}\min_{\mW \in \mathbb{R}^{m\times n}}\; L(\mW) = \E_{\zeta \sim \mathcal{D}}[\ell(\mW; \zeta)],\end{equation}
where $L: \mathbb{R}^{m \times n} \rightarrow \mathbb{R}$ is differentiable. As $L$ may be nonsmooth and nonconvex, we adopt the notion of a $(\rho, \varepsilon)$-stationary point, a relaxation of the Goldstein stationary point~\citep{goldstein1977optimization}.

\begin{definition}[$(\rho, \varepsilon)$-stationary point]\label{def:stationary}
    Suppose $L$ is differentiable and let $\|\cdot\|$ be a norm with dual norm $\|\cdot\|_\dagger$. Then ${\mW}$ is a $(\rho,\varepsilon)$-stationary point if there exists a distribution $p$ over $\mathbb{R}^{m \times n}$ with $\E_{\mY \sim p}[\mY] = \mW$ such that 
    $
        \|\E[\nabla L(\mY)]\|_\dagger \leq \varepsilon$ and $\E\| \mY - \mW\| \leq \rho$.
\end{definition}

To simplify notations, for any $\mW \in \reals^{m \times n}$ and $\rho > 0$, define $\mathcal{P}(\mW; \rho)$ as the set of all distributions $p$ supported on $\reals^{m \times n}$ such that $\E_{\mY \sim p}[\mY] = \mW$ and $\E\| \mY - \mW\| \leq \rho$. Then ${\mW}$ is a $(\rho,\varepsilon)$-stationary point if and only if $\|\nabla L(\mW)\|_\dagger^{[\rho]} \coloneq \inf_{p \in \mathcal{P}(\mW; \rho)} \|\E_{\mY \sim p}[\nabla L(\mY)]\|_\dagger \leq \varepsilon$.

\paragraph{The Reduction Mechanism.} In the O2NC framework, for a given matrix norm $\|\cdot\|$, the optimizer queries an online learner at each step for a direction $\mX_{t+1} \in \{\mX \in \mathbb{R}^{m\times n}: \|\mX\| \leq D\}$ based on the history of observed gradients. The weights are then updated via $\mW_{t+1} = \mW_t + s_{t+1} \mX_{t+1}$ using randomized step sizes $s_{t+1} \sim \mathrm{Exp}(1)$.
As the gradients $\{\mG_t\}$ depend on evolving parameters $\mW_t$ through a nonconvex nonsmooth objective, the induced environment is highly non-stationary. To capture local geometry and decay stale information, we measure performance via \textit{discounted regret} with $\beta \in (0,1)$, defined as
$ \mathrm{Reg}_T^{[\beta]}(D) := \max_{\|\mX\| \leq D} [\sum_{t=1}^T \beta^{T-t} \langle \mG_t, \mX_t - \mX \rangle ]$.
Discounted regret can be reduced to standard regret by defining the loss at time $t$ as $\ell_t^{[\beta]}(\mX) = \langle\beta^{-t}\mG_t, \mX\rangle$ for all $t$ and then multiplying the resulting regret by $\beta^{T}$.
The following proposition establishes the key guarantee: if the learner achieves a discounted regret that is sublinear in $\frac{1}{1-\beta}$, the sequence converges to a $(\rho, \varepsilon)$-stationary point. Detailed protocols and proofs are provided in Appendix~\ref{appen:o2nc_formal}.

\begin{proposition}\label{prop:o2nc}
Let $\{\bar{\mW}_t\}$ denote the exponential moving average sequence and let $\tau$ be a random index defined in Appendix~\ref{appen:o2nc_formal}. If $D = \frac{(1-\beta)}{2\beta}\rho$, Then the expected $\rho$-stationarity gap at $\bar{\mW}_{\tau}$ satisfies
\begin{equation}\label{eq:bound_average_grad}
\E_{\tau}\!\left[\,
\|\nabla L(\bar{\mW}_{\tau})\|_\dagger^{[\rho]}
\right]
= \bigO\Bigl( \frac{L(\mW_0) - L(\mW^*)}{(1-\beta)\rho T} + {(1-\beta)}\E\left[\reg_{T}^{[\beta]}(1)\right]
+ \text{stochastic noise} \Bigr).
\end{equation}
\end{proposition}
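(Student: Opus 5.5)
We follow the standard O2NC argument of \cite{cutkosky2023optimal,ahn2025general} and adapt it to the matrix norm pair $(\|\cdot\|_{\op},\|\cdot\|_*)$. Here the learner's constraint is the operator-norm ball of radius $D$, so the relevant dual norm is the nuclear norm. The starting point is the fundamental theorem of calculus along the update with exponential step size. Because $s_{t}\sim\mathrm{Exp}(1)$ is independent of the past, the identity $\E_{s}[L(\mW_{t-1}+s\mX_{t})-L(\mW_{t-1})]=\E_s[\langle \nabla L(\mW_{t-1}+s\mX_t),\mX_t\rangle]$ holds exactly. Hence if $\mG_t$ is an unbiased stochastic gradient at $\mW_t=\mW_{t-1}+s_t\mX_t$, then $\E[L(\mW_t)-L(\mW_{t-1})]=\E\langle \mG_t,\mX_t\rangle$.

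Summing over $t$ and telescoping gives $L(\mW_T)-L(\mW_0)$ on one side and $\sum_t\langle\mG_t,\mX_t\rangle$ on the other. We then insert a comparator. For each $t$ we choose $\mU_t=-D\,\mathrm{polar}\bigl(\sum_{s\le t}\beta^{t-s}\mG_s\bigr)$, the minimizer of the discounted linear loss over the ball. By the variational formula for the nuclear norm, $\langle\sum_s\beta^{t-s}\mG_s,\mU_t\rangle=-D\|\sum_s\beta^{t-s}\mG_s\|_*$. We follow the usual ``discounted regret over blocks'' manipulation: write $\sum_t\langle\mG_t,\mX_t\rangle$ as $(1-\beta)\sum_t\sum_{s\le t}\beta^{t-s}\langle\mG_s,\mX_s\rangle$ plus boundary terms. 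Each inner sum is at most $\reg_t^{[\beta]}(D)-D\|\sum_s\beta^{t-s}\mG_s\|_*$. Rearranging gives
\[
\frac{1}{T}\sum_{t}\E\Bigl\|(1-\beta)\sum_{s\le t}\beta^{t-s}\mG_s\Bigr\|_*\lesssim \frac{L(\mW_0)-L^*}{DT}+\frac{(1-\beta)}{D}\E\,\reg_T^{[\beta]}(D)+\text{boundary}.
\]
Regret is linear in the radius $D$ for these scale-free linear losses, so $\reg^{[\beta]}(D)=D\reg^{[\beta]}(1)$.

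The next step links the left-hand side to the $\rho$-stationarity gap at the EMA iterate $\bar{\mW}_t=(1-\beta)\sum_{s\le t}\beta^{t-s}\mW_s/(1-\beta^t)$. We define $p_t$ as the distribution placing mass proportional to $\beta^{t-s}$ on $\mW_s$; its mean is $\bar{\mW}_t$. Then $\E_{p_t}\nabla L(\mY)$ equals the normalized discounted average of true gradients. Its difference from the discounted average of $\mG_s$ is a martingale noise term, which we bound by variance and list as the ``stochastic noise'' term. The dispersion is $\E_{p_t}\|\mY-\bar{\mW}_t\|_{\op}$. Since $\|\mW_s-\mW_{s'}\|_{\op}\le D\sum|s_r|$ over the indices in between, and the geometric weights give expected lag $\beta/(1-\beta)$, we get the bound $\lesssim 2D\beta/(1-\beta)=\rho$ under the stated choice of $D$. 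Dividing through by $D=(1-\beta)\rho/(2\beta)$ turns $\frac{L(\mW_0)-L^*}{DT}$ into $\frac{L(\mW_0)-L^*}{(1-\beta)\rho T}$ up to constants, and turns $\frac{1-\beta}{D}\reg(D)$ into $(1-\beta)\reg^{[\beta]}(1)$. Choosing $\tau$ uniform in $[T]$ gives the stated expectation.

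The main obstacle is making the dispersion bound $\E\|\mY-\bar{\mW}\|\le\rho$ hold exactly rather than only up to constants. The exponential steps are unbounded, so we need to bound in expectation. We also have to handle the truncation factor $1-\beta^t$ for small $t$, and the cleanest approach is to absorb it into the boundary term. We will also take care that $\mX_t$ is predictable with respect to $s_t$, so the Exp-step identity applies.
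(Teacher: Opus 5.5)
\textbf{Verdict: the proposal has a genuine gap in the choice of the random index $\tau$.} Everything else follows the paper's skeleton.

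\textbf{What matches the paper.} You use the random-scaling identity and a per-block comparator. You use the decomposition $\sum_t a_t=\sum_t\beta^{T-t}a_t+(1-\beta)\sum_{t<T}\sum_{s\le t}\beta^{t-s}a_s$ (Lemma~\ref{lem:decomp}), the scaling $\Reg^{[\beta]}(D)=D\,\Reg^{[\beta]}(1)$, and the EMA-weighted distribution $p_t$ with mean $\bar{\mW}_t$. Taking the comparator from the stochastic gradients rather than the true gradients is immaterial: the triangle inequality moves $\|\sum_s\beta^{t-s}\mE_s\|_*$ to the right-hand side.

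\textbf{The step you flag as the main obstacle is not one.} Write $\|\mY-\bar{\mW}_t\|\le\|\mY-\mW_t\|+\|\mW_t-\bar{\mW}_t\|$ and use that the truncated-geometric lag has mean at most $\beta/(1-\beta)$. Alternatively, use the paper's symmetrization with an independent copy. Either way you get exactly $2\beta D/(1-\beta)=\rho$.

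\textbf{The gap: uniform $\tau$ does not give the stated bound.} Set $g_t:=\|\nabla L(\bar{\mW}_t)\|_*^{[\rho]}$. Since $\|\E_{p_t}\nabla L(\mY)\|_*=\frac{1-\beta}{1-\beta^t}\|\sum_{s\le t}\beta^{t-s}\nabla L(\mW_s)\|_*$, the summed inequality (after dividing by $D$) only controls
\[
\sum_{t=1}^{T-1}(1-\beta^t)\,\E g_t+\frac{1-\beta^T}{1-\beta}\,\E g_T .
\]
For $t\lesssim 1/(1-\beta)$ the weight $1-\beta^t$ is small; at $t=1$ it equals $1-\beta$. So $\frac1T\sum_t\E g_t$, which is what uniform $\tau$ requires, is not controlled by anything in the hypotheses. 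You would need an a priori bound on $\|\nabla L\|_*$, and you would pick up an extra term of order $\sup_s\|\nabla L(\mW_s)\|_*/((1-\beta)T)$ that is not in the statement.

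\textbf{Why ``absorb it into the boundary term'' does not work as stated.} The deficit cannot be absorbed inside the inequality, because the boundary term lives at $t=T$, not at small $t$. It has to be absorbed in the sampling law instead:
\begin{itemize}
\item take $\Pr(\tau=t)=(1-\beta^t)/T$ for $t<T$;
\item put the leftover mass $(1-\beta^T)/((1-\beta)T)$ on $t=T$.
\end{itemize}
This is affordable only because the $t=T$ term of the decomposition enters with unit weight, i.e.\ weight $\frac{1-\beta^T}{1-\beta}$ on $g_T$. Your displayed intermediate bound gives the $t=T$ block weight $1-\beta$ and so discards exactly this slack; you must keep it.

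With this $\tau$, which is the one the statement refers to, dividing by $T$ turns the left side into exactly $\E_\tau[g_\tau]$. The rest of your argument then matches the paper's.
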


Note that random sampling $\bar{\mW}_\tau$ is necessary as the last iterate lacks stationarity guarantees in nonconvex optimization. Since stochastic noise $\mE_t$ is algorithm-independent, minimizing the bound in \eqref{eq:bound_average_grad} reduces to minimizing discounted regret. We instantiate this framework with our adaptive FTPL and FAML updates, yielding \textit{Pion} and \textit{Leon}, to establish rigorous convergence guarantees.

\subsection{Matrix Optimization Algorithms: Muon, Pion, and Leon}\label{subsec:muon_pion}

In this section, we instantiate the O2NC framework with specific online learning algorithms. We first show that the Muon optimizer corresponds to the classic Follow-the-Leader (FTL) strategy, and then derive \textit{Pion} and \textit{Leon}, based on our Adaptive FTPL and FAML frameworks, respectively.

\textbf{Muon.}
Consider applying the FTL algorithm to the underlying online learning problem. At step $t$, the FTL update selects the matrix $\mX_{t+1}$ that minimizes the cumulative loss observed so far:
\begin{equation*}
    \mX_{t+1} = \argmin_{\|\mX\|_{\op} \leq D} \Bigl\{ \sum_{s=1}^t \ell_s^{[\beta]}(\mX) \Bigr\} = \argmin_{\|\mX\|_{\op} \leq D} \Bigl\{ \Bigl\langle \sum_{s=1}^t \beta^{-s}\mG_s , \mX \Bigr\rangle \Bigr\}=-D \cdot \mathrm{polar}\Bigl(\sum_{s=1}^t \beta^{-s} \mG_s\Bigr).
\end{equation*}
where the last equality follows from the fact that the solution to linear minimization over the operator-norm ball is given by the negative polar factor.

We now derive the optimization method for this update based on the reduction described in Section~\ref{subsec:o2nc}.
Since the polar decomposition is scale-invariant---satisfying $\mathrm{polar}(c\mA) = \mathrm{polar}(\mA)$ for any scalar $c > 0$---we may scale the argument by $\beta^t$ without affecting the result. By defining the exponential moving average (EMA) of the gradients as $\hat{\mG}_t = \sum_{s=1}^t \beta^{t-s} \mG_s$, the update becomes
\begin{equation*}
    \mW_{t+1} = \mW_t - s_{t+1} D \cdot \mathrm{polar}(\hat{\mG}_t), \quad s_{t+1}\sim \mathrm{Exp}(1).
\end{equation*}
Aside from the exponential scaling factor $s_{t+1}$, this derivation demonstrates that the Muon optimizer~\citep{jordan2024muon} is structurally equivalent to FTL under a spectral constraint. However, because FTL does not generally admit sublinear regret for nonsmooth losses, the O2NC framework cannot establish a rigorous convergence rate for Muon in such settings. To obtain these guarantees, we must adopt an algorithm with provable sublinear regret, which motivates our design of Pion.

\textbf{Pion.} We rectify the theoretical shortcomings of FTL by adopting the Adaptive FTPL framework, which stabilizes the optimization process through stochastic perturbation of the cumulative gradients. Extending \eqref{eq:matrix_ftpl_mc} to the discounted case and leveraging the scale invariance of the polar decomposition, the update direction of the resulted optimization algorithm would be 
\begin{equation}\label{eq:pion_update}
    \mX_{t+1} = -\frac{D}{k}\, \sum_{i=1}^k \biggl[ \mathrm{polar}\biggl({\sum_{s=1}^t \beta^{t-s}{\mG}_s} + {\frac{1}{\eta}\wt{\mL}_t\mZ_t^{(i)}}\biggr) \biggr],
\end{equation}
where $\mZ_t^{(i)} \overset{\text{i.i.d.}}{\sim} \mathcal{MN}(0,\mI_m,\mI_n)$ are standard Gaussian matrices, and the iterates are updated according to the standard O2NC protocol: $\mW_{t+1} = \mW_t + s_{t+1} \mX_{t+1}$, where $s_{t+1}\sim \mathrm{Exp}(1)$.

We name this algorithm \textbf{Pion}, as a perturbed variant of Muon. The perturbation is shaped by the discounted adaptive preconditioner $\wt{\mL}_t$, defined via the Cholesky factorization $
    \wt{\mL}_t \wt{\mL}_t^\top = {G^2 \beta^{-2}\mI + \hat{\mM}_t}$ with $\hat{\mM}_t = \sum_{s=1}^t (\beta^2)^{t-s}\mG_s \mG_s^\top$. This structured perturbation enables us to establish a valid regret bound and, consequently, convergence guarantees, properties that are not available for Muon.

We now establish the iteration complexity of Pion. We adopt standard assumptions on the stochastic gradients, consistent with prior work on adaptive methods~\citep{an2025asgo}.

\begin{assumption}[Bounded Stochastic Gradient]\label{assm:gradient_bound}
There exists a positive semidefinite matrix $\mQ \succeq 0$
    such that $\E[\nabla \ell(\mW_t;\zeta) \nabla \ell(\mW_t; \zeta)^\top] \preceq \mQ^2$ for all $t$.
\end{assumption}

Combining the regret guarantee of Adaptive FTPL (Theorem~\ref{thm:regret_FTPL}) with the O2NC reduction (Proposition~\ref{prop:o2nc}), we obtain the following non-asymptotic convergence rate.

\begin{theorem}[Convergence of Pion]\label{thm_pion}
For target accuracy $\varepsilon$ sufficiently small, Pion finds a $(\rho, \varepsilon)$-stationary point of Problem~\eqref{eq:matrix_opt} in $T$ iterations, where:
$T = \bigO\bigl(\max \bigl\{ \frac{C^2\|\mQ\|_*^2 \Delta L}{\rho \varepsilon^3},\frac{C^2 \|\mQ\|_*^2 }{\varepsilon^2}, \frac{ C m G}{\varepsilon}\bigr\}\bigr)$,
and $C = (n/(n-m-1))^{1/4}$ is the dimensional factor from the FTPL analysis.
\end{theorem}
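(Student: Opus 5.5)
The proof combines the O2NC reduction (Proposition~\ref{prop:o2nc}) with a discounted version of the FTPL regret bound (Theorem~\ref{thm:regret_FTPL}). After that, $\beta$ and $T$ are tuned so that each term of \eqref{eq:bound_average_grad} is at most $\varepsilon/3$.

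\textbf{Step 1: discounted regret.} We write the discounted problem as a standard Matrix OLO with losses $\beta^{-t}\mG_t$. Applying the adaptive FTPL update \eqref{eq:matrix_ftpl} to these losses gives cumulative gradient $\sum_{s\le t}\beta^{-s}\mG_s$ and second-moment matrix $\sum_{s\le t}\beta^{-2s}\mG_s\mG_s^\top$. The operator-norm bound is $\beta^{-t}G$, so we replace $G^2\mI$ by a time-varying $G^2\beta^{-2t}\mI$. This preserves the monotonicity $\mL_t\preceq\mL_{t+1}$, which is all that upper stability in Theorem~\ref{thm:main} needs.

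Multiplying numerator and perturbation by $\beta^{t}$ leaves the polar factor unchanged. Hence the iteration is exactly \eqref{eq:pion_update}, with $\wt{\mL}_t\wt{\mL}_t^\top=G^2\beta^{-2}\mI+\hat{\mM}_t$ up to the harmless rescaling. Rescaling the regret by $\beta^T$, Theorem~\ref{thm:main} with the constants of Theorem~\ref{thm:regret_FTPL} yields
\[
\reg_T^{[\beta]}(1)\lesssim C\Bigl(\Tr\bigl(\hat{\mM}_T^{1/2}\bigr)+mG\Bigr).
\]

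\textbf{Step 2: bounding the trace in expectation.} By concavity of $\Tr(\sqrt{\cdot})$ (operator concavity of the square root) and Jensen's inequality, together with Assumption~\ref{assm:gradient_bound},
\[
\E\,\Tr\bigl(\hat{\mM}_T^{1/2}\bigr)\le \Tr\Bigl(\bigl(\textstyle\sum_{s}\beta^{2(T-s)}\mQ^2\bigr)^{1/2}\Bigr)\le \frac{\|\mQ\|_*}{\sqrt{1-\beta^2}}.
\]
Therefore
\[
(1-\beta)\,\E\bigl[\reg_T^{[\beta]}(1)\bigr]\lesssim C\|\mQ\|_*\sqrt{1-\beta}+C(1-\beta)mG.
\]
The algorithm-independent stochastic-noise term of Proposition~\ref{prop:o2nc} is of the same order, $\|\mQ\|_*\sqrt{1-\beta}$, and is absorbed here.

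\textbf{Step 3: parameter tuning.} Choose $1-\beta=\min\{\varepsilon^2/(C\|\mQ\|_*)^2,\ \varepsilon/(CmG)\}$, which makes the regret and noise terms $O(\varepsilon)$. Then require $\Delta L/((1-\beta)\rho T)\le\varepsilon$, i.e.\ $T\gtrsim \Delta L/(\rho\varepsilon(1-\beta))$. When the first branch of the minimum is active this gives $C^2\|\mQ\|_*^2\Delta L/(\rho\varepsilon^3)$. The extra entries $C^2\|\mQ\|_*^2/\varepsilon^2$ and $CmG/\varepsilon$ in the $\max$ come from requiring $T\gtrsim 1/(1-\beta)$, so that the EMA is warmed up and initialization effects such as $\beta^T$ terms are negligible. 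The qualifier ``$\varepsilon$ sufficiently small'' lets us assume the $\|\mQ\|_*$ branch dominates the $mG$ branch in the first term.

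\textbf{Main obstacle.} I expect the main difficulty to be Step~1. Theorem~\ref{thm:main} is stated for a fixed $G$, while the rescaled losses have growing norms $\beta^{-t}G$. The Gaussian-smoothing admissibility of Theorem~\ref{thm:regret_FTPL} must be applied with the correctly scaled $\mL_t$, and one must check that the $G^2\beta^{-2}$ floor in Pion produces only the lower-order $mG$ term after multiplying by $\beta^T$.

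As a fallback, I would rerun the proof of Theorem~\ref{thm:main} directly in the discounted setting, using $\mL_t=\sqrt{G^2\beta^{-2t}\mI+\sum_{s\le t}\beta^{-2s}\mG_s\mG_s^\top}$. The key tool is the standard inequality $\sum_t\Tr\bigl(\mG_t^\top\mL_t^{-1}\mG_t\bigr)\le 2\Tr(\mL_T)$, which still holds because $\mL_t^2\succeq \mL_{t-1}^2+\mG_t\mG_t^\top$.
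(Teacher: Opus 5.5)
Your proposal follows essentially the same route as the paper. You apply Theorem~\ref{thm:regret_FTPL} to the rescaled losses $\beta^{-t}\mG_t$ and multiply by $\beta^{t}$ to get $\Reg_t^{[\beta]}(1)\lesssim C\bigl(\Tr(\hat{\mM}_t^{1/2})+mG/\beta\bigr)$, bound the trace and the noise by $\|\mQ\|_*/\sqrt{1-\beta^2}$ via Jensen and Assumption~\ref{assm:gradient_bound}, and tune $\sqrt{1-\beta}\propto\varepsilon/(C\|\mQ\|_*)$ under a small-$\varepsilon$ condition.

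Two small fixes are needed. First, the floor must be $G^2\beta^{-2(t+1)}\mI$ rather than $G^2\beta^{-2t}\mI$, so that $\mL_t^2$ dominates the \emph{next} scaled gradient, as the off-by-one Bregman step of Theorem~\ref{thm:main} requires. This is exactly what produces Pion's $G^2\beta^{-2}\mI$ after rescaling, and it is also why your fallback inequality $\mL_t^2\succeq\mL_{t-1}^2+\mG_t\mG_t^\top$ pairs the wrong indices. Second, your requirement $T\gtrsim 1/(1-\beta)$ is correct, but it comes from the terms $\frac{1}{T}\Reg_T^{[\beta]}(1)$ and $\frac{1}{T}\E\bigl\|\sum_{t}\beta^{T-t}\mE_t\bigr\|_*$ in the formal bound of Proposition~\ref{prop:o2nc_formal}, which carry weight $\beta/T$ instead of $1-\beta$, not from EMA warm-up.
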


\textbf{Leon.}
While Pion achieves robustness via stochastic perturbations, Monte-Carlo sampling can introduce additional variance. To address this, we introduce {Leon} (Learning-Enabled Orthogonalization and Normalization), a deterministic alternative based on our FAML algorithm.
The core idea of FAML is to replace the stochastic perturbation of FTPL with a deterministic augmentation in a higher-dimensional space. Instead of adding noise to the accumulated gradients, we construct an augmented matrix $\widehat{\mS}_t$ by concatenating the accumulated gradient with a scaled preconditioner.
Adapting the FAML update \eqref{eq:FAML_update} to the discounted O2NC setting, we define the augmented state using the gradient EMA $\hat{\mG}_t$ and the discounted preconditioner $\hat{\mL}_t$. The update direction is computed by projecting the polar factor of this augmented matrix back onto the original space:
\begin{equation}\label{eq:leon_derivation}
    \mX_{t+1} = \text{LeadingBlock}\left( -D \cdot \mathrm{polar}\left( \begin{bmatrix} \hat{\mG}_t & \frac{1}{\eta}\hat{\mL}_t \end{bmatrix} \right) \right).
\end{equation}
Leveraging the closed-form expression for the polar factor of a block matrix, this simplifies to:
\begin{equation}\label{eq:leon_update}
    \mX_{t+1} = -D \left( \hat{\mG}_t \hat{\mG}_t^\top + \frac{1}{\eta^2}(G^2 \beta^{-2}\mI + \hat{\mM}_t) \right)^{-1/2} \hat{\mG}_t,
\end{equation}
where $\hat{\mG}_t = \sum_{s=1}^t \beta^{t-s} \mG_s$ and $\hat{\mM}_t = \sum_{s=1}^t \beta^{2t-2s} \mG_s \mG_s^\top$. 
The final weight update follows the standard O2NC protocol: $\mW_{t+1} = \mW_t + s_{t+1} \mX_{t+1}$.
Leon can be interpreted as a preconditioned gradient method where the curvature correction is applied via a "smoothed" inverse square root. Unlike Muon, which lacks theoretical guarantees for nonsmooth objectives, Leon inherits the rigorous adaptive regret bounds of FAML without requiring random sampling.

\begin{theorem}[Convergence of Leon]\label{thm:Leon}
For target accuracy $\varepsilon$ sufficiently small, Leon finds a $(\rho, \varepsilon)$-stationary point of Problem~\eqref{eq:matrix_opt} in $T$ iterations, where:
  $
    T = \bigO\left(\max \left\{ \frac{\|\mQ\|_*^2 \Delta L}{\rho \varepsilon^3},\frac{\|\mQ\|_*^2 }{\varepsilon^2}, \frac{m G}{\varepsilon}\right\}\right)$.
\end{theorem}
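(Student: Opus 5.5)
The proof of Theorem~\ref{thm:Leon} will mirror that of Theorem~\ref{thm_pion}: combine a discounted-regret bound for the FAML learner with the O2NC reduction of Proposition~\ref{prop:o2nc}, and then balance the three resulting terms.

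\textbf{Step 1: discounted regret of FAML.} Leon is exactly FAML (Theorem~\ref{thm:FAML}) run on the rescaled losses $\ell_t^{[\beta]}(\mX)=\langle \beta^{-t}\mG_t,\mX\rangle$. The argument of the polar factor in \eqref{eq:leon_derivation} equals $\beta^{t}$ times the FAML augmented matrix $\bigl[\,\sum_s \beta^{-s}\mG_s,\ \tfrac1\eta(G^2\beta^{-2t-2}\mI+\sum_s\beta^{-2s}\mG_s\mG_s^\top)^{1/2}\bigr]$, and the polar factor is scale invariant, so the two updates coincide. On this rescaled problem the gradient bound $G$ must be replaced by $\beta^{-T}G$. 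This is harmless for the analysis, because Theorem~\ref{thm:main} only uses $G$ through $\mL_t\succeq$ the bound on $\mG_{t+1}$. If $\mL_t$ is allowed to use a time-varying bound $G\beta^{-t-1}$, the monotonicity $\mL_t\preceq\mL_{t+1}$ needed for upper stability still holds.

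Applying Theorem~\ref{thm:FAML} and multiplying by $\beta^T$, I expect
\[
\reg_T^{[\beta]}(1)\le 2\,\Tr\Bigl(\bigl(\textstyle\sum_{t}\beta^{2(T-t)}\mG_t\mG_t^\top\bigr)^{1/2}\Bigr)+2mG .
\]
Taking expectations, using concavity of $\Tr\sqrt{\cdot}$ (Jensen) together with Assumption~\ref{assm:gradient_bound}, I expect $\E\,\reg^{[\beta]}_T(1)\lesssim \|\mQ\|_*/\sqrt{1-\beta^2}+mG$. The cross terms between different times are not a problem here, since $\mM$ is a sum of outer products of individual gradients.

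\textbf{Step 2: plug into Proposition~\ref{prop:o2nc}.} This gives
\[
\E\|\nabla L\|^{[\rho]}_\dagger \lesssim \frac{\Delta L}{(1-\beta)\rho T}+\|\mQ\|_*\sqrt{1-\beta}+(1-\beta)mG+\text{noise}.
\]
For the noise term under Assumption~\ref{assm:gradient_bound}, I expect a contribution of order $\|\mQ\|_*\sqrt{1-\beta}$ in the nuclear (dual) norm, as in the analysis of \citep{an2025asgo}, so it has the same form as the regret term.

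\textbf{Step 3: tune and balance.} Choose $1-\beta=\min\{\varepsilon^2/\|\mQ\|_*^2,\ \varepsilon/(mG)\}$, so that the second and third terms are each $O(\varepsilon)$. Then require $T\gtrsim \Delta L/((1-\beta)\rho\varepsilon)$ to control the first term. Unpacking the minimum yields the terms $\|\mQ\|_*^2\Delta L/(\rho\varepsilon^3)$ and $mG\Delta L/(\rho\varepsilon^2)$. The latter is dominated for small $\varepsilon$, and the remaining terms $\|\mQ\|_*^2/\varepsilon^2$ and $mG/\varepsilon$ come from requiring $T\ge 1/(1-\beta)$ so that the EMA warm-up has mixed.

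\textbf{Main obstacle.} The delicate part is Step 1. One must verify that the $\beta^{-2}$-scaled $G^2$ term in Leon's preconditioner correctly dominates the rescaled gradients $\beta^{-t}\mG_t$, and that Theorem~\ref{thm:main}'s proof tolerates a growing $G$. If it does not, I would redo the short argument of Theorem~\ref{thm:main} directly with $\mL_t=\sqrt{G_t^2\mI+\mM_t}$ and $G_t$ nondecreasing. The bound $\Tr(\mL_T)$ is unchanged in form, and multiplying by $\beta^T$ returns the constant $mG$.
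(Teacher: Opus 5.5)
Your proposal is correct and follows essentially the same route as the paper. The paper's proof simply reruns the Pion argument with $C=1$: a discounted FAML regret bound in expectation via concavity of $\Tr\sqrt{\cdot}$ under Assumption~\ref{assm:gradient_bound}, the same bound for the noise, Proposition~\ref{prop:o2nc}, and the tuning $1-\beta\asymp\varepsilon^2/\|\mQ\|_*^2$ under a small-$\varepsilon$ condition. Your explicit check that Leon's $G^2\beta^{-2}$ term amounts to FAML on the rescaled losses with nondecreasing bound $G\beta^{-t-1}$ fills in a step the paper leaves implicit. One harmless slip: if you invoke Theorem~\ref{thm:main} as stated, the bound keeps $\Tr(\mL_T)$ with $G_T=G\beta^{-T-1}$, so after multiplying by $\beta^T$ the additive term is $2mG/\beta$ (as in the paper), not $2mG$, which costs at most a factor $2$ since $\beta\ge\tfrac12$ in the relevant regime.
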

Since FAML achieves tighter regret than FTPL, Leon improves Pion's convergence guarantee by a factor of $(n/(n-m-1))^{1/2}$ in dominant terms and $(n/(n-m-1))^{1/4}$ in the non-dominant term.

\subsection{Synthetic Empirical Validation}

\begin{figure}[t!]
\centering
    \small
    \includegraphics[width=0.92\linewidth]{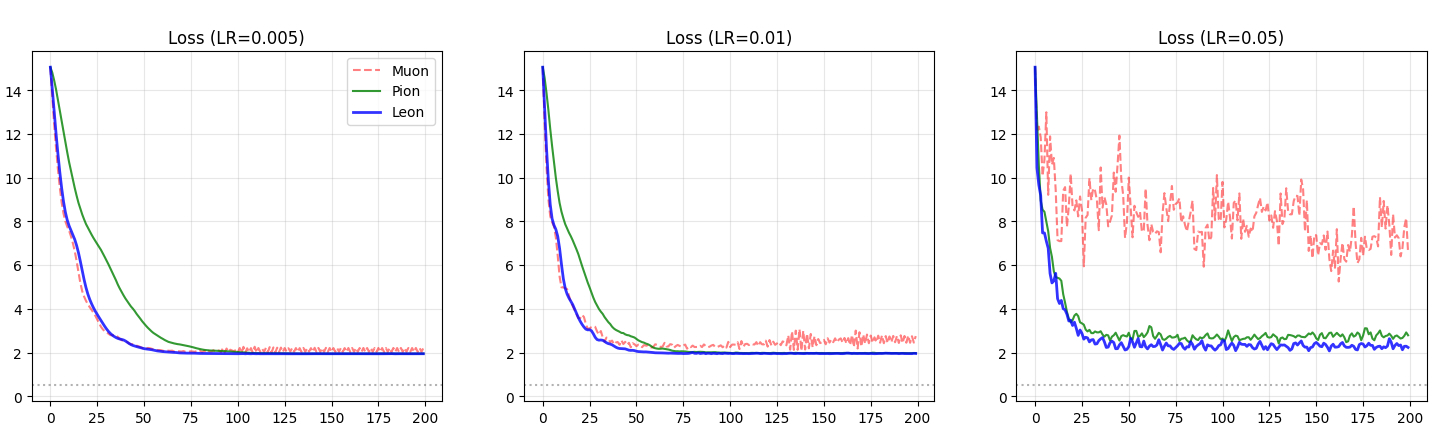}
\caption{Convergence paths with different  constant learning rates.}
    \vskip -.15in
    \label{fig:my_plot_2}
\end{figure}

To empirically validate our results, we compare \emph{Pion} and \emph{Leon} against \emph{Muon} on a synthetic Robust Matrix Sensing objective explicitly constructed to violate smooth-optimization assumptions. Defined by the superposition of an $\ell_1$-type absolute value term and high-frequency cosine ripples, this loss landscape generates abrupt gradient discontinuities designed to destabilize optimizers lacking intrinsic smoothing. We anticipate that \emph{Muon}---lacking such mechanisms---will react sharply to these fluctuations, whereas \emph{Pion} and \emph{Leon} will leverage the implicit smoothing of their Matrix OLO formulation to dampen discontinuities and ensure stable descent. The convergence paths in Figure~\ref{fig:my_plot_2} confirm this distinction: while Muon exhibits significant oscillation, both Pion and Leon demonstrate steady, monotonic convergence trajectories, with Leon yielding slightly superior rate performance consistent with our theoretical bounds. See Appendix~\ref{app:experiments} for more details. 

\section{Conclusion}

In this work, we addressed the computational challenges of adaptive matrix online learning constrained by the operator norm. By casting algorithm design as a smoothing problem, we developed two efficient methods, adaptive FTPL and FAML, that matched the best-known regret guarantees without solving costly quadratic projections. Finally, we extended this framework to nonsmooth nonconvex optimization via O2NC, introducing the Pion and Leon optimizers and establishing convergence guarantees for these methods that the popular matrix-based optimizer Muon lacks.

\newpage
\appendix

\makeatletter
\addtocontents{atoc}{} 

\let\orig@addcontentsline\addcontentsline
\renewcommand{\addcontentsline}[3]{\edef\@tempa{#1}\edef\@tempb{toc}\ifx\@tempa\@tempb
\orig@addcontentsline{atoc}{#2}{#3}\else
\orig@addcontentsline{#1}{#2}{#3}\fi
}
\makeatother

\clearpage
\section*{Contents of Appendix}
\setcounter{tocdepth}{3}
\makeatletter
\@starttoc{atoc}
\makeatother
\clearpage

\section{Additional Motivating Examples}\label{app_motivating_example}

\vspace{2mm}
\noindent \textbf{Learning rotations.} The problem of learning rotations aims to recover an underlying rotation matrix from a sequence of examples~\citep{arora2009learning,hazan2010learning,hazan2010corrigendum,nie2015optimal,hazan2016learning}. This fundamental problem arises in a wide range of applications, including computer vision, face recognition, robotics, crystallography, and physics; we refer the reader to~\cite{arora2009learning} for further references. Formally, at round $t$, the learner observes a unit vector $\vu_t \in \mathbb{R}^n$ and chooses an orthogonal matrix $\mX_t \in \sO(n)$. Subsequently, a target unit vector $\vv_t \in \mathbb{R}^n$ is revealed, and the Learner suffers the squared prediction error $\frac{1}{2}\|\vv_t - \mX_t \vu_t\|^2 = 1-\vv_t^\top \mX_t \vu_t$. 
As shown in \cite{hazan2010corrigendum,nie2015optimal}, one natural approach is to convexify the decision set $\sO(n)$ and use randomization to produce an orthogonal matrix. Since the convex hull of the orthogonal group is the operator-norm unit ball $\{\mX \in \sR^{n\times n}: \|\mX\|_{\op} \le 1\}$, this problem can be formulated as an instance of matrix OLO with an operator-norm constraint, where the loss function takes the form $\ell_t(\mX) = -\vv_t^\top \mX \vu_t = - \langle \vv_t \vu_t^\top, \mX\rangle$. 

\vspace{2mm}

\noindent \textbf{Online learning for quasi-Newton methods.}
Quasi-Newton methods accelerate optimization by maintaining a Hessian approximation, thereby avoiding the prohibitive cost of exact curvature computation. Recent advances \citep{jiang2023online,jiang2023accelerated,jiang2024online,jiang2025improved} have recast the update of this approximation as an online learning problem. Specifically, the optimizer sequentially selects a matrix $\mB_t$ from the spectral set $\mathcal{K} = \{\mB \in \mathbb{S}^n : \mu \mI \preceq \mB \preceq L\mI\}$. Subsequently, the environment reveals the iterate difference $\vs_t$ and gradient difference $\vy_t$, and the learner suffers the loss $\ell_t(\mB) = \frac{1}{2\|\vs_t\|^2} \|\vy_t - \mB \vs_t\|^2$. The regret bounds of this online sequence directly dictate the global convergence rate of the underlying optimization method~\citep{jiang2023online}.
The geometry of this problem is intrinsically spectral. The constraint $\mu \mI \preceq \mB_t \preceq L \mI$ serves a dual purpose: it guarantees a baseline linear convergence rate comparable to gradient descent and ensures the condition number of subproblems remains bounded. Crucially, this constraint set is affinely equivalent to the unit operator-norm ball constraint $\|\hat{\mB}\|_{\op} \leq 1$ via the transformation $\hat{\mB} = \frac{2}{L-\mu}(\mB - \frac{\mu+L}{2}\mI)$. Since the loss $\ell_t$ is convex, standard linearization reduces the task to minimizing a sequence of linear losses over this spectral set. Consequently, designing efficient quasi-Newton updates reduces to solving a Matrix OLO problem over the operator-norm ball.

\begin{table}[t]
    \centering
     \resizebox{\textwidth}{!}{
    \begin{tabular}{lll}
    \toprule
    \textbf{Algorithm}  & \textbf{Preconditioner} $\mH_t$ & \textbf{Regret Bound} \\
    \midrule
    \makecell[l]{Scalar AdaGrad-OGD\\ \scriptsize\citep{streeter2010less}}     & $\sqrt{\sum_{s=1}^t \|\vg_s\|_2^2} \mI$ & $ \|\cX\|_2\sqrt{\sum_{t=1}^T \|\vg_t\|_2^2}$ \\
    \makecell[l]{Diagonal AdaGrad-OGD\\\scriptsize\citep{streeter2010less,duchi2011adaptive}} &   $\sqrt{\diag(\sum_{s=1}^t \vg_s \vg_s^\top)}$   &   $\|\cX\|_\infty \tr\Bigl[\sqrt{\diag(\sum_{t=1}^T \vg_t \vg_t^\top)}\Bigr]$ \\
    \makecell[l]{Full-matrix AdaGrad-OGD\\\scriptsize\citep{duchi2011adaptive}} &   $\sqrt{\sum_{s=1}^t \vg_s \vg_s^\top}$    &  $\|\cX\|_2 \tr\Bigl[\sqrt{\sum_{t=1}^T \vg_t \vg_t^\top}\Bigr]$ \\
    \midrule
    \makecell[l]{Scalar AdaGrad-FTRL\\\scriptsize\citep{duchi2011adaptive}}     & $\delta\mI+\sqrt{\sum_{s=1}^t \|\vg_s\|_2^2} \mI$ & $ \|\cX\|_2\sqrt{\sum_{t=1}^T \|\vg_t\|_2^2} + \delta\|\cX\|_2$ \\
    \makecell[l]{Diagonal AdaGrad-FTRL\\\scriptsize\citep{duchi2011adaptive}} &   $\delta\mI+\sqrt{\diag(\sum_{s=1}^t \vg_s \vg_s^\top)}$   &   $\|\cX\|_\infty \sum_{i=1}^d\sqrt{\sum_{t=1}^T g_{t,i}^2}+ \delta\|\cX\|_1$ \\
    \makecell[l]{Full-matrix AdaGrad-FTRL\\\scriptsize\citep{duchi2011adaptive}} &   $\delta\mI + \sqrt{\sum_{s=1}^t \vg_s \vg_s^\top}$    &  $\|\cX\|_2 \tr\left[(\sum_{t=1}^T \vg_t \vg_t^\top)^{\frac{1}{2}}\right] + \delta \|\cX\|_2$
    \end{tabular}
    }
    \vskip -2mm
    \caption{Regret guarantees for AdaGrad for OGD and FTRL-type updates. Here, we define $\|\cX\|_p = \max_{\vx, \vy \in \cX} \|\vx\!-\!\vy\|_p$ for $p=1,2,\infty$. Moreover, $\delta \geq \max_{t}\|\vg_t\|_2$ for the scalar and full-matrix cases, and $\delta \geq \max_t\|\vg_t\|_\infty$ for the diagonal case.}
    \label{tab:OGD_adagrad}
\end{table}

\section{Adaptive Methods in the Vector Case}\label{appen:vector_case}

In this section, we review three canonical vector-based frameworks: (i) Online Gradient Descent (OGD), (ii) Follow-the-Regularized-Leader (FTRL), and (iii) Follow-the-Perturbed-Leader (FTPL). 
In the vector setting, these algorithms achieve adaptivity by explicitly tailoring the regularizer, local metric, or perturbation distribution to the structure of the constraint set. By clarifying how each framework exploits vector geometry, we motivate the need to adapt to spectral geometry in the matrix setting.

 A general formulation of \textbf{adaptive OGD} method ~\citep{streeter2010less,duchi2011adaptive} is given by the update:
\begin{equation}\label{eq:matrix_adagrad}
    \vx_{t+1} = \argmin_{\vx \in \cX} \Bigl\{\vg_t^\top \vx + \frac{1}{2\eta}(\vx - \vx_t)^\top \mH_t(\vx-\vx_t)\Bigr\},
\end{equation}
where $\vg_t$ is the gradient, $\eta>0$ is a scaling parameter, and $\mH_t$ is a preconditioner matrix constructed from the past gradients. 
The parameterization of $\mH_t$ as a scalar, diagonal, or full matrix yields distinct variants of adaptive OGD also known as AdaGrad, each characterized by specific regret guarantees (Table~\ref{tab:OGD_adagrad}). Consequently, there is no universally superior variant; the optimal choice of $\mH_t$ is strictly dictated by the geometry of the constraint set $\cX$. Achieving the tightest regret bounds requires aligning the structure of the preconditioner with the underlying geometry of the constraints.

\vspace{2em}\noindent\textbf{FTRL.} 
\cite{duchi2011adaptive} also proposed AdaGrad with FTRL update rules, which is given as follows:
\begin{equation*}
    \vx_{t+1} = \argmin_{\vx \in \cX} \Bigl\{\sum_{s=1}^t\vg_s^\top \vx + \frac{1}{2\eta}(\vx - \vx_1)^\top \mH_t(\vx-\vx_1)\Bigr\},
\end{equation*}
The preconditioner matrix $\mH_t$ is chosen similarly to the OGD case, except it requires an additional regularizer $\delta \mI$ due to the so-called ``off-by-one'' issue in the regret analysis~\citep{mcmahan2017survey}. The corresponding update rules and regret guarantees are presented in Table~\ref{tab:OGD_adagrad}. 

\begin{remark}
    Choosing the additional regularization term $\delta \mI$ properly requires knowledge of the maximum gradient norm, making the algorithm not fully adaptive. Specifically, the scalar and full-matrix cases require $\delta \geq \max_{t}\|\vg_t\|_2$, while the diagonal case requires $\delta \geq \|\vg_t\|_\infty$. For the scalar and diagonal cases, this can be fixed by either setting $\delta=0$ and using a more refined analysis~\citep{orabona2018scale}. Alternatively, one can use the clipping technique introduced in~\cite{cutkosky2019artificial}. Another solution is to use Proximal FTRL from \cite{mcmahan2017survey}.
\end{remark}

\vspace{2em}\noindent\textbf{FTPL.} Adaptive algorithms for FTPL are less explored. 
\cite{Abernethy2016} proposed an FTPL with Gaussian perturbation for online learning with $\ell_2$-Euclidean ball, given by 
\begin{equation*}
   \textstyle \vx_{t+1} = \E_{\vr \sim \mathcal{N}(0, \mI)}\left[\argmin_{\|\vx\|_2 \leq D} \left\{\left(\sum_{s=1}^t\vg_s + \eta_t\vr\right)^\top \vx\right\}\right].
\end{equation*}
It is shown that when $\eta_t$ is chosen adaptively, it recovers the same regret bound as Scalar AdaGrad-FTRL.

\subsection{Comparison}

We observe that the geometry of the decision set $\cX$ plays an important role in the regret of these AdaGrad variants, and it determines which variant is the best choice. In the following, we will consider two special cases: (i) $\cX$ is the $\ell_2$-norm ball; (ii) $\cX$ is the $\ell_\infty$-norm ball. Interestingly, both can be considered as a special case of the operator-norm ball in the matrix space. Specifically, when $m=1$, the matrix decision variable $\mX \in \mathbb{R}^{m\times n}$ can be identified by a vector $\vx \in \mathbb{R}^n$, and it holds that $\|\mX\|_{\op} = \|\vx\|_2$. Moreover, when $\mX$ is restricted to be diagonal, i.e., $\mX = \diag(\vx)$, then $\|\mX\|_{\op} = \|\vx\|_{\infty}$. 

\begin{remark}
    Since $\tr \bigl(\sqrt{\sum_{t=1}^T \vg_t \vg_t^\top}\bigr) \geq \sqrt{\tr \bigl(\sum_{t=1}^T \vg_t \vg_t^\top\bigr)} = \sqrt{\sum_{t=1}^T \|\vg_t\|_2^2}$, the regret bound of Full-matrix AdaGrad is always no better than that of Scalar AdaGrad; This phenomenon has been discussed in \cite{cutkosky2020better}. Thus, in the following, we focus on comparing the regret bound between Scalar AdaGrad and Diagonal AdaGrad. 
\end{remark}

\noindent\textbf{$\ell_2$-norm ball.} When $\cX = \{\vx \in \mathbb{R}^d: \|\vx\|_2 \leq D\}$, we have $\|\cX\|_2 = \|\cX\|_{\infty} = D$. By optimizing the scaling factor $\eta$, we obtain: 
\begin{equation*}
    \textstyle \text{Scalar AdaGrad: }\sqrt{2}D \sqrt{\sum_{t=1}^T \|\vg_t\|_2^2} \quad \text{vs.}\quad \text{Diagonal AdaGrad: }\sqrt{2}D  \sum_{i=1}^d \sqrt{\sum_{t=1}^T g_{t,i}^2}
\end{equation*}
In this case, one can show that the regret bound of Scalar AdaGrad is always no worse than that of Diagonal AdaGrad.

\noindent\textbf{$\ell_\infty$-norm ball.} When $\cX = \{\vx \in \mathbb{R}^d: \|\vx\|_\infty \leq D\}$, we have $\|\cX\|_2 = \sqrt{d}D$ and $\|\cX\|_{\infty} = D$. Similarly, after optimizing the choice of $\eta$, we obtain 
\begin{equation*}
    \textstyle \text{Scalar AdaGrad: }\sqrt{2d}D \sqrt{\sum_{t=1}^T \|\vg_t\|_2^2} \quad \text{vs.}\quad \text{Diagonal AdaGrad: }\sqrt{2}D  \sum_{i=1}^d \sqrt{\sum_{t=1}^T g_{t,i}^2}.
\end{equation*}
In this case, the ranking is reversed and Diagonal AdaGrad always outperforms Scalar AdaGrad. 

\section{Improved Analysis of Shampoo}\label{appen:improved_shampoo}
In the original Shampoo paper~\citep{gupta2018shampoo}, the authors establish the regret bound
\begin{equation}\label{eq:shampoo_original}
 \bigO\!\left( \|\cX\|_F \sqrt{r}\; \tr(\mM_T^{1/4})\, \tr(\mN_T^{1/4}) \right),
\end{equation}
where $\mM_T = \sum_{t=1}^T \mG_t\mG_t^\top$, $\mN_T = \sum_{t=1}^T \mG_t^\top\mG_t$,
$\|\cX\|_F = \max_{\mX,\mY\in\cX}\|\mX-\mY\|_F$, and $r$ is the maximum rank of $\{\mG_t\}_{t=1}^T$.
In this section, we show that a sharper regret bound for Shampoo can be obtained.

\begin{theorem}\label{thm:improved_shampoo}
The Shampoo algorithm in \eqref{eq:shampoo} with $\mL_t = \mM_t^{1/4}$ and $\mR_t = \mN_t^{1/4}$ achieves
\begin{equation}\label{eq:shampoo_regret}
\textstyle
\bigO\!\Bigl(
\|\cX\|_{\op}\,
\max\Bigl\{
\sqrt{\tr(\mM_T^{1/2})\,\tr(\mN_T^{1/2})},\;
\|\mM_T\|_{\op}^{1/4}\,\tr(\mN_T^{1/4}),\;
\tr(\mM_T^{1/4})\,\|\mN_T\|_{\op}^{1/4}
\Bigr\}
\Bigr).
\end{equation}
\end{theorem}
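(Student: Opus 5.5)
We follow the standard adaptive-OGD analysis of \eqref{eq:shampoo} with the Kronecker preconditioner $\mH_t = \mR_t\otimes\mL_t$, $\mL_t=\mM_t^{1/4}$, $\mR_t=\mN_t^{1/4}$, and then bound the two resulting terms using the operator norm rather than the Frobenius norm.

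\textbf{Step 1: vectorized OGD bound.} Write $\vx=\mathrm{vec}(\mX)$ and assume (after a standard $\epsilon\mI$ regularization) that $\mH_t$ is nondecreasing in the Loewner order. Monotonicity holds because $\mM_t,\mN_t$ are nondecreasing and $A\mapsto A^{1/4}$ is operator monotone. The usual telescoping argument then gives
\[
\reg_T \le \frac{1}{2\eta}\sum_{t=1}^T (\vx_t-\vx^\star)^\top(\mH_t-\mH_{t-1})(\vx_t-\vx^\star) + \frac{\eta}{2}\sum_{t=1}^T \vg_t^\top \mH_t^{-1}\vg_t .
\]

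\textbf{Step 2: diameter term in operator norm.} Set $\Delta_t=\mX_t-\mX^\star$, so $\|\Delta_t\|_{\op}\le\|\cX\|_{\op}$. Split
\[
\mH_t-\mH_{t-1}=\mR_t\otimes(\mL_t-\mL_{t-1})+(\mR_t-\mR_{t-1})\otimes\mL_{t-1}.
\]
Each piece is a Kronecker product of PSD matrices. For PSD $A,B$ we have
\[
\Tr(\Delta^\top A\Delta B)\le \|\Delta\|_{\op}^2\min\{\Tr(A)\|B\|_{\op},\ \|A\|_{\op}\Tr(B)\},
\]
which follows from $\Tr(\Delta^\top A\Delta B)=\Tr(B^{1/2}\Delta^\top A\Delta B^{1/2})$ together with $\Delta^\top A\Delta\preceq\|A\|_{\op}\|\Delta\|_{\op}^2\mI$, or its mirror image. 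Use the version with $\|\mR_t\|_{\op}$ on the first piece and the version with $\|\mL_{t-1}\|_{\op}$ on the second. Summing and telescoping gives at most
\[
\|\cX\|_{\op}^2\bigl(\Tr(\mM_T^{1/4})\|\mN_T\|_{\op}^{1/4}+\|\mM_T\|_{\op}^{1/4}\Tr(\mN_T^{1/4})\bigr).
\]
These are the two operator-norm/nuclear-type terms in \eqref{eq:shampoo_regret}.

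\textbf{Step 3: gradient term (the main obstacle).} We need a bound on $\sum_t \Tr(\mG_t^\top\mM_t^{-1/4}\mG_t\mN_t^{-1/4})$. Following Gupta et al., first use Ando's inequality to compare $\mM_t^{1/4}\otimes\mN_t^{1/4}$ with the full-matrix AdaGrad statistic $\hat\mH_t=(\sum_{s\le t}\vg_s\vg_s^\top)^{1/2}$. This is valid because $\vg\vg^\top\preceq r\,\mN\otimes\mI$ and $\vg\vg^\top\preceq r\,\mI\otimes\mM$. Then the AdaGrad lemma gives
\[
\sum_t\vg_t^\top\hat\mH_t^{-1}\vg_t\le 2\Tr(\hat\mH_T).
\]
To avoid the $\sqrt r$ and $\Tr\cdot\Tr$ loss, I would instead apply the matrix AdaGrad lemma directly in the Kronecker form and Cauchy–Schwarz the trace. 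Concretely, write
\[
\Tr(\mG^\top A\mG B)\le\sqrt{\Tr(\mG^\top A^2\mG)\,\Tr(\mG B^2\mG^\top)}
\]
with $A=\mM_t^{-1/4}$, $B=\mN_t^{-1/4}$. Next use Cauchy–Schwarz over $t$ and the one-sided lemma
\[
\sum_t\Tr(\mG_t^\top\mM_t^{-1/2}\mG_t)\le2\Tr(\mM_T^{1/2}),
\]
and its mirror for $\mN$. This yields $2\sqrt{\Tr(\mM_T^{1/2})\Tr(\mN_T^{1/2})}$. Verifying that this Cauchy–Schwarz split is legitimate, i.e. that $A$ and $B$ act on opposite sides so that the bilinear Cauchy–Schwarz in the trace inner product applies, is the delicate point. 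If it fails, I would fall back to bounding $\Tr(\mG^\top A\mG B)\le\|B\|_{\op}\Tr(\mG^\top A\mG)$ variants. Those produce the other two max-terms instead.

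\textbf{Step 4: tuning.} Choose $\eta=\|\cX\|_{\op}$. This balances the two parts, so the total is $\bigO(\|\cX\|_{\op}\cdot\max\{\cdots\})$, using that a sum of three terms is at most three times their maximum.
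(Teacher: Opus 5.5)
Your proposal is correct and follows the paper's proof essentially step for step. It uses the same adaptive-OGD decomposition, the same split of $\mH_t-\mH_{t-1}$ bounded via $\langle \mA,\mB\rangle\le\|\mA\|_{\op}\Tr(\mB)$ and telescoped, and the same Frobenius Cauchy--Schwarz plus one-sided lemma $\sum_t\Tr(\mG_t^\top\mM_t^{-1/2}\mG_t)\le 2\Tr(\mM_T^{1/2})$ for the gradient term; only the constant in $\eta$ differs. The step you flagged as delicate is valid, since for symmetric $A,B$ one has $\Tr(\mG^\top A\mG B)=\langle A\mG,\mG B\rangle$ in the Frobenius inner product, which is exactly the inequality the paper uses (your fallback via $\|\mN_t^{-1/4}\|_{\op}$ would not have worked, as that quantity is uncontrolled).
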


\begin{remark}
Using $\|\mM_T^{1/4}\|_F = \sqrt{\tr(\mM_T^{1/2})}$ and $\|\mM_T^{1/4}\|_{\op}=\|\mM_T\|_{\op}^{1/4}$,
the bound in \eqref{eq:shampoo_regret} admits the symmetric expression
\begin{equation*}
\textstyle
\bigO\!\Bigl(
\|\cX\|_{\op}\,
\max\Bigl\{
\|\mM_T^{1/4}\|_{F}\,\|\mN_T^{1/4}\|_{F},\;
\|\mM_T^{1/4}\|_{\op}\,\|\mN_T^{1/4}\|_{*},\;
\|\mM_T^{1/4}\|_{*}\,\|\mN_T^{1/4}\|_{\op}
\Bigr\}
\Bigr).
\end{equation*}
Compared to \eqref{eq:shampoo_original}, this bound removes the $\sqrt{r}$ factor and replaces the
constraint-set dependence $\|\cX\|_F$ by $\|\cX\|_{\op}$.
Moreover, since $\|\mA\|_F \le \|\mA\|_*$ and $\|\mA\|_{\op}\le \|\mA\|_*$, we have
\[
\max\Bigl\{
\|\mM_T^{1/4}\|_{F}\,\|\mN_T^{1/4}\|_{F},\;
\|\mM_T^{1/4}\|_{\op}\,\|\mN_T^{1/4}\|_{*},\;
\|\mM_T^{1/4}\|_{*}\,\|\mN_T^{1/4}\|_{\op}
\Bigr\}
\;\le\;
\|\mM_T^{1/4}\|_{*}\,\|\mN_T^{1/4}\|_{*}.
\]
In  regimes where $\mM_T^{1/4}$ and $\mN_T^{1/4}$ are close to full-rank, the improvement over
$\|\mM_T^{1/4}\|_{*}\|\mN_T^{1/4}\|_{*}$ can be as large as a factor on the order of $\min\{m,n\}$.
\end{remark}

The following proposition is a standard regret bound for Online Mirror Descent. 
\begin{proposition}\label{prop:regret_2}
Consider the update in \eqref{eq:shampoo}. Then, we have:
\begin{align*}\label{eq:prop:regret_2}
   &\phantom{{}\leq{}}\sum_{t=1}^T \langle \mG_t, \mX_t - \mX \rangle\\
    &\leq \frac{1}{2\eta} 
   \sum_{t=2}^T 
   \bigl[\tr \bigl((\mX_t-\mX)^\top\mL_t (\mX_t-\mX)\mR_t\bigr) - \tr \bigl((\mX_t-\mX)^\top\mL_{t-1} (\mX_t-\mX)\mR_{t-1}\bigr) \bigr] \\ 
   &\phantom{{}\leq{}}+\frac{1}{2\eta}\tr((\mX_1-\mX)^\top \mL_1 (\mX_1-\mX)\mR_1)+\frac{\eta}{2}\sum_{t=1}^T \tr \bigl( \mG_t^\top \mL_t^{-1} \mG_t \mR_t^{-1} \bigr).
\end{align*}
\end{proposition}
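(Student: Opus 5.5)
We will prove the statement by the standard mirror-descent argument in the Kronecker-factored geometry. The approach is to pass to the vectorized picture, in which \eqref{eq:shampoo} is a projected adaptive OGD step, and to transport each inequality back into trace form. Write $\vx=\mathrm{vec}(\mX)$ and $\mH_t=\mR_t\otimes\mL_t$. With $\mL_t,\mR_t\succ 0$, this makes $\|\vx\|_{\mH_t}^2=\tr(\mX^\top\mL_t\mX\mR_t)$ a squared norm. Also $\mH_t^{-1}=\mR_t^{-1}\otimes\mL_t^{-1}$, so the dual norm is $\|\vg\|_{\mH_t^{-1}}^2=\tr(\mG^\top\mL_t^{-1}\mG\mR_t^{-1})$. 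The update \eqref{eq:shampoo} is then exactly $\vx_{t+1}=\Pi^{\mH_t}_{\cX}(\vx_t-\eta\mH_t^{-1}\vg_t)$, a projection in the $\mH_t$-norm, and the proof reduces to the one-step inequality for this projection.

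\textbf{Key step.} Fix a comparator $\mX\in\cX$. Let $\vy_{t+1}=\vx_t-\eta\mH_t^{-1}\vg_t$ be the unprojected point. Nonexpansiveness of the $\mH_t$-projection, equivalently its first-order optimality condition, gives $\|\vx_{t+1}-\vx\|_{\mH_t}^2\le\|\vy_{t+1}-\vx\|_{\mH_t}^2$. Expanding the right-hand side yields
\[
\|\vx_{t+1}-\vx\|_{\mH_t}^2\le\|\vx_t-\vx\|_{\mH_t}^2-2\eta\langle\vg_t,\vx_t-\vx\rangle+\eta^2\|\vg_t\|_{\mH_t^{-1}}^2 .
\]
Rearranging gives
\[
\langle\mG_t,\mX_t-\mX\rangle\le\frac{1}{2\eta}\bigl(\|\vx_t-\vx\|_{\mH_t}^2-\|\vx_{t+1}-\vx\|_{\mH_t}^2\bigr)+\frac{\eta}{2}\tr(\mG_t^\top\mL_t^{-1}\mG_t\mR_t^{-1}).
\]

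\textbf{Summation.} We sum this over $t=1,\dots,T$. The quadratic terms telescope after an index shift: $\sum_t\|\vx_t-\vx\|_{\mH_t}^2-\sum_t\|\vx_{t+1}-\vx\|_{\mH_t}^2$ equals $\|\vx_1-\vx\|_{\mH_1}^2+\sum_{t=2}^T\bigl(\|\vx_t-\vx\|_{\mH_t}^2-\|\vx_t-\vx\|_{\mH_{t-1}}^2\bigr)-\|\vx_{T+1}-\vx\|_{\mH_T}^2$. We drop the last term because it is nonpositive. Translating each $\mH$-norm back into trace form then gives the three terms of the stated bound exactly.

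\textbf{Expected obstacle.} The only delicate point is the well-posedness of the projection: $\mH_t$ must be positive definite. If $\mM_t$ or $\mN_t$ is singular, as happens early on, we assume the usual $\epsilon\mI$ regularization or a pseudo-inverse convention. We would state this assumption explicitly. The remaining computations are routine Kronecker identities.
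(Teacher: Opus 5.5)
Your argument is correct, and it is the standard projected adaptive-OGD (mirror descent) proof that the paper invokes as ``standard'' without writing out. The identification $\mH_t=\mR_t\otimes\mL_t$, the one-step projection inequality in the $\mH_t$-norm, and the index-shifted telescoping reproduce the three stated terms exactly, and the positive-definiteness you flag is already assumed in the paper's setup of \eqref{eq:shampoo}, which takes $\mL_t\in\mathbb{S}^{m\times m}_{++}$ and $\mR_t\in\mathbb{S}^{n\times n}_{++}$.
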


\begin{lemma}\label{lem:matrix_potential_appen}
  For any sequence of matrices $\{\mG_t\}_{t=1}^T$ with $\mG_t \in \mathbb{R}^{m \times n}$, define $\mM_t = \sum_{s=1}^t \mG_s \mG_s^\top$ and $\mN_t = \sum_{s=1}^t \mG_s^\top \mG_s$. Then it holds that
  \begin{equation*}
    \sum_{t=1}^T \tr(\mG_t^\top \mM_t^{-1/2}\mG_t) \leq 2\Tr(\sqrt{\mM_T}) \quad \text{and} \quad \sum_{t=1}^T \tr(\mG_t \mN_t^{-1/2}\mG_t^\top) \leq 2\Tr(\sqrt{\mN_T}). 
  \end{equation*}
\end{lemma}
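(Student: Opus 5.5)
The plan is to prove the first inequality; the second follows by applying the same argument to the transposed sequence $\{\mG_t^\top\}$, since then $\mN_t$ plays the role of $\mM_t$. The approach is the standard matrix potential argument used for full-matrix AdaGrad, specialized to the case where each increment $\mM_t - \mM_{t-1} = \mG_t\mG_t^\top$ is PSD. The key identity is
\[
\tr(\mG_t^\top \mM_t^{-1/2}\mG_t) = \tr(\mM_t^{-1/2}\mG_t\mG_t^\top) = \tr\bigl(\mM_t^{-1/2}(\mM_t - \mM_{t-1})\bigr),
\]
with $\mM_0 = \mzero$. Here $\mM_t^{-1/2}$ is understood as the pseudo-inverse square root when $\mM_t$ is singular. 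This is harmless, because the range of $\mG_t$ is contained in the range of $\mM_t$, so only the restriction of $\mM_t^{-1/2}$ to that range matters.

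The central inequality to establish is that for PSD $\mA \succeq \mB \succeq 0$ (with the range of $\mA - \mB$ inside the range of $\mA$),
\[
\tr\bigl(\mA^{-1/2}(\mA-\mB)\bigr) \le 2\,\tr(\mA^{1/2}) - 2\,\tr(\mB^{1/2}).
\]
I will prove it in three steps.
\begin{enumerate}[(i)]
\item Rewrite the left-hand side as $\tr(\mA^{1/2}) - \tr(\mA^{-1/2}\mB)$.
\item Show that $\tr(\mA^{-1/2}\mB) \ge 2\tr(\mB^{1/2}) - \tr(\mA^{1/2})$. Operator monotonicity of the square root gives $\mB^{1/2} \preceq \mA^{1/2}$. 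I will then use the matrix AM--GM-type bound $2\tr(\mB^{1/2}) \le \tr(\mA^{1/2}) + \tr(\mA^{-1/2}\mB)$, which follows from
\[
\tr\bigl((\mA^{1/4} - \mA^{-1/4}\mB^{1/2})(\mA^{1/4} - \mA^{-1/4}\mB^{1/2})^\top\bigr) \ge 0 .
\]
Expanding this expression gives $\tr(\mA^{1/2}) - 2\tr(\mB^{1/2}) + \tr(\mA^{-1/2}\mB) \ge 0$. The cross terms are $\tr(\mA^{1/4}\mB^{1/2}\mA^{-1/4}) = \tr(\mB^{1/2})$ by cyclicity, so no ordering is even needed beyond range containment.
\item Combining (i) and (ii) gives the left-hand side $\le \tr(\mA^{1/2}) - 2\tr(\mB^{1/2}) + \tr(\mA^{1/2}) = 2\tr(\mA^{1/2}) - 2\tr(\mB^{1/2})$, which is the desired bound.
\end{enumerate}

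Applying the central inequality with $\mA = \mM_t$ and $\mB = \mM_{t-1}$ and summing over $t = 1,\dots,T$, the right-hand sides telescope to $2\tr(\mM_T^{1/2}) - 2\tr(\mM_0^{1/2}) = 2\Tr(\sqrt{\mM_T})$.

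The main obstacle I expect is the singular case, where $\mM_t$ may not be invertible and $\mM_t^{-1/2}$ must be interpreted carefully in step (ii). I would handle this either by working on the range of $\mM_t$, or by replacing $\mM_t$ with $\mM_t + \epsilon\mI$. The second route uses the same argument with $\mB = \mM_{t-1} + \epsilon\mI$, so the increments are unchanged and the telescoped bound becomes $2\tr((\mM_T + \epsilon\mI)^{1/2}) - 2\tr((\epsilon\mI)^{1/2})$. Letting $\epsilon \to 0$ then recovers the claimed bound, with monotone convergence on the left-hand side justifying the limit.
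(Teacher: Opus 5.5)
Your proposal is correct and follows essentially the paper's route: the per-step bound $\tr(\mM_t^{-1/2}(\mM_t-\mM_{t-1}))\le 2\tr(\mM_t^{1/2})-2\tr(\mM_{t-1}^{1/2})$ is exactly the supergradient inequality for the concave map $\mM\mapsto 2\Tr(\mM^{1/2})$ that the paper uses before telescoping, and your completing-the-square step is a self-contained proof of it (the gap equals $\|\mA^{1/4}-\mA^{-1/4}\mB^{1/2}\|_F^2$, which is why no ordering of $\mA,\mB$ is needed when $\mA\succ 0$). What you add is the $\epsilon\mI$ regularization for singular $\mM_t$, a case the paper's proof passes over.
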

\begin{proof}
  For ease of notation, define $\mM_0 = 0$ and $\mN_0 = 0$. Then we can write $\tr(\mG_t^\top \mM_t^{-1/2}\mG_t) = \tr( \mM_t^{-1/2}\mG_t \mG_t^\top) = \tr(\mM_t^{-1/2}(\mM_t - \mM_{t-1}))$ for any $t \geq 1$. Moreover, note that the matrix-valued function $\psi(\mM) = 2\Tr(\mM^{1/2})$ is concave and $\nabla \psi(\mM) = \mM^{-1/2}$. Thus, it holds that $\psi(\mM_{t-1}) -\psi(\mM_{t}) \leq  \langle \nabla\psi(\mM_t), \mM_{t-1} - \mM_{t} \rangle$, which is equivalent to $\Tr(\mM_t^{-1/2} (\mM_{t} - \mM_{t-1})) \leq 2(\Tr(\mM_t^{1/2}) - \Tr(\mM_{t-1}^{1/2}))$. Thus, we get  
    \begin{equation*}
       \sum_{t=1}^T \tr(\mG_t^\top \mM_t^{-1/2}\mG_t)\leq \sum_{t=1}^T 2(\Tr(\mM_t^{1/2}) - \Tr(\mM_{t-1}^{1/2}))  = 2\Tr(\mM_T^{1/2}). 
    \end{equation*}
    This proves the first inequality, and the second one follows from similar arguments. 
\end{proof}

\begin{lemma}\label{lem:first_term_shampoo}
  Recall that $\mM_t = \sum_{s=1}^t \mG_s \mG_s^\top$ and $\mN_t = \sum_{s=1}^t \mG_s^\top \mG_s$. With the choices $\mL_t = \mM_t^{1/4}$ and $\mR_t = \mN_t^{1/4}$, it holds that $\sum_{t=1}^T \tr \bigl( \mG_t^\top \mL_t^{-1} \mG_t \mR_t^{-1} \bigr) \leq 2\sqrt{\tr(\mM_T^{1/2})\tr(\mN_T^{1/2})}$. 
\end{lemma}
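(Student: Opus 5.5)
The plan is to split each summand into a product of two terms, one involving only $\mM_t$ and one involving only $\mN_t$, using the Cauchy--Schwarz inequality for the Frobenius inner product. A second application of Cauchy--Schwarz over $t$ then reduces everything to the two bounds of Lemma~\ref{lem:matrix_potential_appen}.

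\textbf{Step 1 (per-round factorization).} Fix $t$ and set $\mA_t = \mM_t^{-1/4}\mG_t$ and $\mB_t = \mG_t\mN_t^{-1/4}$. Then $\mA_t^\top\mB_t = \mG_t^\top \mM_t^{-1/4}\mG_t\mN_t^{-1/4}$, so the summand equals $\langle \mA_t,\mB_t\rangle$, and
\[
\tr\bigl(\mG_t^\top\mL_t^{-1}\mG_t\mR_t^{-1}\bigr) \le \|\mA_t\|_F\|\mB_t\|_F
= \sqrt{\tr(\mG_t^\top\mM_t^{-1/2}\mG_t)}\,\sqrt{\tr(\mG_t\mN_t^{-1/2}\mG_t^\top)} .
\]
The last equality uses $\|\mA_t\|_F^2=\tr(\mG_t^\top\mM_t^{-1/2}\mG_t)$ and, by cyclicity of the trace, $\|\mB_t\|_F^2=\tr(\mN_t^{-1/4}\mG_t^\top\mG_t\mN_t^{-1/4})=\tr(\mG_t\mN_t^{-1/2}\mG_t^\top)$.

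\textbf{Step 2 (sum over rounds).} Summing over $t$ and applying Cauchy--Schwarz to the sum of products of square roots gives
\[
\sum_t \tr\bigl(\mG_t^\top\mL_t^{-1}\mG_t\mR_t^{-1}\bigr) \le \Bigl(\sum_t\tr(\mG_t^\top\mM_t^{-1/2}\mG_t)\Bigr)^{1/2}\Bigl(\sum_t\tr(\mG_t\mN_t^{-1/2}\mG_t^\top)\Bigr)^{1/2}.
\]
By Lemma~\ref{lem:matrix_potential_appen}, the two factors are at most $(2\tr\mM_T^{1/2})^{1/2}$ and $(2\tr\mN_T^{1/2})^{1/2}$. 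Their product is exactly $2\sqrt{\tr(\mM_T^{1/2})\tr(\mN_T^{1/2})}$.

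\textbf{Main obstacle: singular $\mM_t$ or $\mN_t$.} The only delicate point is that $\mM_t$ and $\mN_t$ may be singular, so the inverse powers must be read as pseudoinverses. Since $\mM_t\succeq\mG_t\mG_t^\top$, the column space of $\mG_t$ lies in the range of $\mM_t$; likewise, the row space of $\mG_t$ lies in the range of $\mN_t$. Hence all the identities above hold when restricted to these ranges. To avoid this bookkeeping, I would instead replace $\mM_t,\mN_t$ by $\mM_t+\epsilon\mI$ and $\mN_t+\epsilon\mI$, run the same argument (Lemma~\ref{lem:matrix_potential_appen} goes through verbatim with $\mM_0=\epsilon\mI$), and let $\epsilon\to 0$.
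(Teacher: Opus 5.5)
Your proposal is correct and matches the paper's proof step for step: you apply Frobenius Cauchy--Schwarz with $\mL_t^{-1}\mG_t=\mM_t^{-1/4}\mG_t$ and $\mG_t\mR_t^{-1}=\mG_t\mN_t^{-1/4}$, then a second Cauchy--Schwarz over $t$, then Lemma~\ref{lem:matrix_potential_appen}. Your $\epsilon\mI$-regularization for singular $\mM_t,\mN_t$ adds rigor the paper omits; in particular, the paper's proof of Lemma~\ref{lem:matrix_potential_appen} implicitly needs $\mM_t\succ 0$ for the gradient $\mM_t^{-1/2}$ of $2\Tr(\mM^{1/2})$ to be defined.
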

\begin{proof}
  By Cauchy-Schwarz inequality, we have $\tr \bigl( \mG_t^\top \mL_t^{-1} \mG_t \mR_t^{-1} \bigr) \leq \|\mL_t^{-1}\mG_t\|_F \|\mG_t \mR_t^{-1}\|_F$ for any $t \in \{1,\dots,T\}$. By applying Cauchy-Schwarz inequality again, we obtain that 
  \begin{equation}\label{eq:cs_shampoo}
    \sum_{t=1}^T \tr \bigl( \mG_t^\top \mL_t^{-1} \mG_t \mR_t^{-1} \bigr) \leq \sum_{t=1}^T \|\mL_t^{-1}\mG_t\|_F \|\mG_t \mR_t^{-1}\|_F \leq \sqrt{\sum_{t=1}^T \|\mL_t^{-1}\mG_t\|_F^2} \sqrt{\sum_{t=1}^T \|\mG_t \mR_t^{-1}\|_F^2}.
  \end{equation}
  Since $\mL_t = \mM_t^{1/4}$ and $\mR_t = \mN_t^{1/4}$, we have $\|\mL_t^{-1}\mG_t\|_F^2 = \tr\bigl(\mG_t^\top \mL_t^{-2} \mG_t\bigr) = \Tr(\mG_t^\top \mM_t^{-1/2} \mG_t)$ and similarly $\|\mG_t \mR_t^{-1}\|_F^2 = \Tr(\mG_t \mN_t^{-1/2} \mG_t^\top)$. The statement now follows by using Lemma~\ref{lem:matrix_potential_appen} in \eqref{eq:cs_shampoo}. 
\end{proof}

\begin{lemma}\label{lem:second_term_shampoo}
  We have 
  $\tr((\mX_1-\mX)^\top \mL_1 (\mX_1-\mX)\mR_1) + \sum_{t=2}^T 
   \bigl[\tr \bigl((\mX_t-\mX)^\top\mL_t (\mX_t-\mX)\mR_t\bigr) - \tr \bigl((\mX_t-\mX)^\top\mL_{t-1} (\mX_t-\mX)\mR_{t-1}\bigr) \bigr] \leq \|\cX\|_{\op}^2 \|\mM_T\|_{\op}^{1/4}\tr(\mN_T^{1/4}) + \|\cX\|_{\op}^2 \tr(\mM_T^{1/4})\|\mN_T\|_{\op}^{1/4}$. 
\end{lemma}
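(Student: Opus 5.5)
The plan is to handle the telescoping sum by splitting each increment into a change in the left preconditioner and a change in the right preconditioner. Each piece is then bounded using only $\|\mX_t-\mX\|_{\op}\le\|\cX\|_{\op}$ and a trace of a positive semidefinite difference.

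Write $\Delta_t := \mX_t - \mX$, so $\|\Delta_t\|_{\op}\le \|\cX\|_{\op}$ since both $\mX_t$ and $\mX$ lie in $\cX$. Set $\mL_0=\mzero$, $\mR_0=\mzero$. Then the $t=1$ term fits the same template as the others, and the whole left-hand side becomes $\sum_{t=1}^T \bigl[\tr(\Delta_t^\top\mL_t\Delta_t\mR_t)-\tr(\Delta_t^\top\mL_{t-1}\Delta_t\mR_{t-1})\bigr]$. For each $t$ I will use the splitting
\begin{equation*}
\tr(\Delta_t^\top\mL_t\Delta_t\mR_t)-\tr(\Delta_t^\top\mL_{t-1}\Delta_t\mR_{t-1})
=\tr\bigl(\Delta_t^\top(\mL_t-\mL_{t-1})\Delta_t\mR_t\bigr)+\tr\bigl(\Delta_t^\top\mL_{t-1}\Delta_t(\mR_t-\mR_{t-1})\bigr).
\end{equation*}

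The key auxiliary fact is that for PSD $\mA\in\semiS{m}$, $\mB\in\semiS{n}$ and any $\Delta$,
\begin{equation*}
\tr(\Delta^\top\mA\Delta\mB)\le \|\mB\|_{\op}\tr(\Delta^\top\mA\Delta)=\|\mB\|_{\op}\tr(\mA\Delta\Delta^\top)\le \|\mB\|_{\op}\|\Delta\|_{\op}^2\tr(\mA).
\end{equation*}
Both inequalities hold because $\Delta^\top\mA\Delta$ and $\mA$ are PSD, $\mB\preceq\|\mB\|_{\op}\mI$, and $\Delta\Delta^\top\preceq\|\Delta\|_{\op}^2\mI$. By symmetry the same bound holds with the roles of $\mA$ and $\mB$ exchanged: $\tr(\Delta^\top\mA\Delta\mB)\le\|\mA\|_{\op}\|\Delta\|_{\op}^2\tr(\mB)$.

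To apply this with $\mA=\mL_t-\mL_{t-1}$ and $\mB=\mR_t-\mR_{t-1}$, I need these differences to be PSD. This is the only slightly nontrivial step. $\mM_t\succeq\mM_{t-1}$ and $\mN_t\succeq\mN_{t-1}$ hold trivially, and $x\mapsto x^{1/4}$ is operator monotone by L\"owner--Heinz. Hence $\mL_t\succeq\mL_{t-1}$ and $\mR_t\succeq\mR_{t-1}$, including at $t=1$ against $\mzero$. Monotonicity also gives $\|\mR_t\|_{\op}\le\|\mN_T\|_{\op}^{1/4}$ and $\|\mL_{t-1}\|_{\op}\le\|\mM_T\|_{\op}^{1/4}$.

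Applying the auxiliary fact then bounds the first piece by $\|\cX\|_{\op}^2\|\mN_T\|_{\op}^{1/4}\tr(\mL_t-\mL_{t-1})$ and the second by $\|\cX\|_{\op}^2\|\mM_T\|_{\op}^{1/4}\tr(\mR_t-\mR_{t-1})$. Summing over $t$, both sums telescope, to $\tr(\mL_T)=\tr(\mM_T^{1/4})$ and $\tr(\mR_T)=\tr(\mN_T^{1/4})$ respectively. This yields exactly $\|\cX\|_{\op}^2\|\mM_T\|_{\op}^{1/4}\tr(\mN_T^{1/4})+\|\cX\|_{\op}^2\tr(\mM_T^{1/4})\|\mN_T\|_{\op}^{1/4}$, as claimed.

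If $\mM_t$ or $\mN_t$ is singular, the quarter powers are still well defined. Since the lemma involves no inverses, no regularization is needed here.
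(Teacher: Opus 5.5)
Your proof is correct and follows the paper's route. You split each increment into a change-of-$\mL$ piece and a change-of-$\mR$ piece, bound each with the PSD trace inequality together with $\|\mX_t-\mX\|_{\op}\le\|\cX\|_{\op}$, $\|\mL_t\|_{\op}\le\|\mM_T\|_{\op}^{1/4}$ and $\|\mR_t\|_{\op}\le\|\mN_T\|_{\op}^{1/4}$, and then telescope. Your convention $\mL_0=\mR_0=\mzero$ and your explicit appeal to operator monotonicity of $x\mapsto x^{1/4}$ make the argument slightly more complete than the paper's write-up. The paper sums only over $t\ge 2$ and discards $-\tr(\mL_1)$ and $-\tr(\mR_1)$ without accounting for the $t=1$ term, which that discarded slack in fact absorbs.
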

\begin{proof}
  We can write 
\begin{align*}
    & \phantom{{}={}}\langle \mX_t - \mX, \mL_t(\mX_t - \mX) \mR_t \rangle - \langle \mX_t - \mX, \mL_{t-1}(\mX_t - \mX) \mR_{t-1} \rangle \\
    &= \langle (\mX_t - \mX) \mR_t(\mX_t - \mX)^\top, \mL_t - \mL_{t-1} \rangle + \langle (\mX_t - \mX)^\top \mL_{t-1}(\mX_t - \mX), \mR_t - \mR_{t-1} \rangle. 
\end{align*}
Note that for two PSD matrices $\mA$ and $\mB$, $\langle \mA ,\mB \rangle \leq \|\mA\|_{\op} \tr(\mB)$. Therefore,  we have 
\begin{equation*}
    \langle (\mX_t - \mX) \mR_t(\mX_t - \mX)^\top, \mL_t - \mL_{t-1} \rangle \leq \|(\mX_t - \mX) \mR_t(\mX_t - \mX)^\top\|_{\op} \tr(\mL_t - \mL_{t-1}).
\end{equation*} 
Moreover, $\|\mA \mB\|_{\op} \leq \|\mA\|_{\op} \|\mB\|_{\op}$. Thus, we further have $$\|(\mX_t - \mX) \mR_t(\mX_t - \mX)^\top\|_{\op} \leq \|\mX_t - \mX\|^2_{\op} \|\mR_t\|_{\op} {{}\leq \|\cX\|_{\op}^2  \|\mR_T\|_{\op}}. $$ Combining these two together leads to $$\langle (\mX_t - \mX) \mR_t(\mX_t - \mX)^\top, \mL_t - \mL_{t-1} \rangle \leq \|\cX\|_{\op}^2{ \|\mR_T\|_{\op}}(\tr(\mL_t) - \tr(\mL_{t-1})).$$
Similarly, we have 
$$\langle (\mX_t - \mX)^\top \mL_{t-1}(\mX_t - \mX), \mR_t - \mR_{t-1} \rangle \leq {\|\cX\|_{\op}^2 \|\mL_{T}\|_{\op}}(\tr(\mR_t) - \tr(\mR_{t-1})).$$
Thus, by summing the inequality from $t=2$ to $t=T$, we conclude that 
\begin{align*}
& \phantom{{}\leq{}}\sum_{t=2}^T\left[ 
   \langle \mX_t - \mX, \mL_t(\mX_t - \mX) \mR_t \rangle 
   - \langle \mX_t - \mX, \mL_{t-1}(\mX_t - \mX) \mR_{t-1} \rangle
   \right] \\
   &\leq {\|\cX\|_{\op}^2 \|\mR_T\|_{\op}}(\tr(\mL_T) - \tr(\mL_{1})) + {\|\cX\|_{\op}^2 \|\mL_{T}\|_{\op}}(\tr(\mR_T) - \tr(\mR_{1})) \\
   & \leq \|\cX\|_{\op}^2 \|\mR_T\|_{\op}\tr(\mL_T) + \|\cX\|_{\op}^2\|\mL_{T}\|_{\op}\tr(\mR_T),
\end{align*}
where in the last inequality we used that $\tr(\mL_{1}) \geq 0$ and $\tr(\mR_{1}) \geq 0$. Finally, note that $\mL_T = \mM_T^{1/4}$ and $\mR_T = \mN_T^{1/4}$, and thus $\|\mL_T\|_{\op} = \|\mM_T^{1/4}\|_{\op} = \|\mM_T\|_{\op}^{1/4}$ and similarly $\|\mR_T\|_{\op}=\|\mN_T\|_{\op}^{1/4}$. This concludes the proof.  
\end{proof}
Applying Lemmas~\ref{lem:first_term_shampoo} and~\ref{lem:second_term_shampoo} to Proposition~\ref{prop:regret_2} leads to 
\begin{equation*}
  \sum_{t=1}^T \langle \mG_t, \mX_t - \mX \rangle \leq \frac{\|\cX\|_{\op}^2}{2\eta} \Bigl( \|\mM_T\|_{\op}^{1/4}\tr(\mN_T^{1/4}) +  \tr(\mM_T^{1/4})\|\mN_T\|_{\op}^{1/4} \Bigr) + \eta\sqrt{\tr(\mM_T^{1/2})\tr(\mN_T^{1/2})}.
\end{equation*}
By setting $\eta = \|\cX\|_{\op} / \sqrt{2}$, we obtain the bound in \eqref{eq:shampoo_regret}.

\section{Gradient-based Prediction Algorithm}

\subsection{Proof of Lemma~\ref{lem:GBPA}}\label{appen:lem_GBPA}
It follows from the definition of regret that  
\begin{equation}\label{eq:reg_decomp_1}
    \reg_T = \sum_{t=1}^T \langle {\mG}_t, \mX_t \rangle - \min_{\|\mX\|_{\op} \leq D} \sum_{t=1}^T \langle {\mG}_t, \mX \rangle = \sum_{t=1}^T \langle {\mG}_t, \mX_t \rangle + D\Bigl\|\sum_{t=1}^T {\mG}_t\Bigr\|_* .
\end{equation}
Recall that $\mS_t = \sum_{s=1}^t {\mG}_s$ and $\Phi(\mS) = \|\mS\|_*$, and hence the last term above is $D\Phi(\mS_T)$. Moreover, for $t\geq 1$, using the update in~\eqref{eq:GBPA}, we write 
\begin{align*}
    \langle {\mG}_{t+1}, \mX_{t+1} \rangle &= \langle \mS_{t+1} - \mS_{t},  -D\nabla\tilde{\Phi}_{t}(\mS_{t})\rangle \\
    &= D(\tilde{\Phi}_{t}(\mS_{t+1}) - \tilde{\Phi}_{t}(\mS_{t}) -\langle \nabla\tilde{\Phi}_{t}(\mS_{t}), \mS_{t+1} - \mS_{t} \rangle) - D\tilde{\Phi}_{t}(\mS_{t+1}) + D\tilde{\Phi}_{t}(\mS_{t}) \\
    &= D\Breg{\tilde{\Phi}_{t}}{\mS_{t+1}}{\mS_{t}} - D\tilde{\Phi}_{t}(\mS_{t+1}) +D\tilde{\Phi}_{t}(\mS_{t}). 
\end{align*}
Adding and subtracting $D\tilde{\Phi}_{t+1}(\mS_{t+1})$, this can be written as 
\begin{equation*}
  \langle {\mG}_{t+1}, \mX_{t+1} \rangle = D\Breg{\tilde{\Phi}_{t}}{\mS_{t+1}}{\mS_{t}}+ D(\tilde{\Phi}_{t+1}(\mS_{t+1}) - \tilde{\Phi}_{t}(\mS_{t+1})) + D(\tilde{\Phi}_{t}(\mS_{t}) - \tilde{\Phi}_{t+1}(\mS_{t+1})).
\end{equation*}
Summing the above inequality from $t=1$ to $t=T-1$, and noting that $\langle {\mG}_{1}, \mX_{1} \rangle = 0$, we obtain: 
\begin{equation}\label{eq:accum_loss}
    \sum_{t=1}^T \langle {\mG}_t, \mX_t \rangle  = D\sum_{t=1}^{T-1} \Breg{\tilde{\Phi}_{t}}{\mS_{t+1}}{\mS_{t}} + D\sum_{t=1}^{T-1} (\tilde{\Phi}_{t+1}(\mS_{t+1}) - \tilde{\Phi}_{t}(\mS_{t+1})) + D(\tilde{\Phi}_1(\mS_1) - \tilde{\Phi}_T(\mS_T)).
\end{equation}
Finally, combining \eqref{eq:reg_decomp_1} and \eqref{eq:accum_loss} yields the desired result. 

\subsection{Proof of Theorem~\ref{thm:main}}\label{appen:main}
We apply the regret decomposition in Lemma~\ref{lem:GBPA} and bound each term using properties from Definition~\ref{def:admissible_potential}.

\medskip
\mypara{Underestimation term}
By dominance (Definition~\ref{def:admissible_potential}, Property~\ref{item:dominance}), we have $$ \wt{\Phi}_T(\mS_T) = \wt{\Psi}(\mS_T; \mL_T/\eta) \geq \|\mS_T\|_* =  \Phi(\mS_T). $$
Hence, $
\Phi(\mS_T) - \wt{\Phi}_T(\mS_T) \le 0 .
$

\medskip
\mypara{Bregman divergence term} 
Since $\wt{\Phi}_t = \wt{\Psi}(\cdot; \mL_t/\eta)$, by smoothness (Property~\ref{item:smooth}) we have 
$$
\Breg{\wt{\Phi}_t}{\mS_{t+1}}{\mS_t}
\le
\frac{\beta\eta}{2}\,
\Tr((\mS_{t+1} - \mS_{t})^\top \mL_t^{-1} (\mS_{t+1} - \mS_{t})) = \frac{\beta\eta}{2}\tr(\mG_{t+1}^\top \mL_t^{-1} \mG_{t+1}).
$$
Recall that $\mM_t \coloneqq \sum_{s=1}^t \mG_s \mG_s^\top$. Since $\|\mG_{t+1}\|_{\op}\le G$, from~\eqref{eq:L_t} we have 
$
\mL_t = \sqrt{G^2\mI + \mM_t}
\succeq \sqrt{\mM_{t+1}} .
$
Applying Lemma~\ref{lem:matrix_potential_appen}, we obtain
$$
\sum_{t=1}^{T-1}
\Breg{\wt{\Phi}_t}{\mS_{t+1}}{\mS_t}
\le \frac{\beta\eta}{2} \sum_{t=1}^{T-1}\tr(\mG_{t+1}^\top \mL_t^{-1} \mG_{t+1}) \le
\beta\eta
\bigl(\Tr(\sqrt{\mM_T}) - \|\mG_1\|_*\bigr).$$

\medskip
\mypara{Stability term} Since $\mL_{t+1} \succeq \mL_t$ from \eqref{eq:L_t}, we can use upper stability (Property~\ref{item:stable}) to obtain 
\begin{equation*}
  \wt{\Phi}_{t+1}(\mS_{t+1}) - \wt{\Phi}_t(\mS_{t+1}) = \wt{\Psi}(\mS_{t+1}; \mL_{t+1}/\eta) - \wt{\Psi}(\mS_{t+1}; \mL_{t}/\eta) \leq \alpha\left(\frac{1}{\eta}\tr(\mL_{t+1}) - \frac{1}{\eta}\tr(\mL_t)\right).
\end{equation*}
Summing the above inequality from $t=1$ to $T-1$, we get
\[
\sum_{t=1}^{T-1}
\bigl(\wt{\Phi}_{t+1}(\mS_{t+1}) - \wt{\Phi}_t(\mS_{t+1})\bigr)
\le \sum_{t=1}^{T-1} \frac{\alpha}{\eta}(\Tr(\mL_{t+1}) - \Tr(\mL_t)) = 
\frac{\alpha}{\eta}\bigl(\Tr(\mL_T) - \Tr(\mL_1)\bigr).
\]
Moreover, using $\tilde{\Psi}(\mX;\mzero) = \|\mX\|_*$ and upper stability once more, we have 
$$
\tilde{\Phi}_1(\mG_1) = \tilde{\Psi}(\mG_1;\mzero) + \tilde{\Psi}(\mG_1;\mL_1/\eta) - \tilde{\Psi}(\mG_1;\mzero) \leq \|\mG_1\|_* + \frac{\alpha}{\eta}\tr(\mL_1).
$$
Combining the above,
$$
\sum_{t=1}^{T-1}
\bigl(\wt{\Phi}_{t+1}(\mS_{t+1}) - \wt{\Phi}_t(\mS_{t+1})\bigr)
+ \wt{\Phi}_1(\mG_1)
\le
\|\mG_1\|_* + \frac{\alpha}{\eta}\Tr(\mL_T).
$$

Finally, 
Using $\sqrt{\mM_T}\preceq \mL_T$ and collecting all terms,
$$
\reg_T
\le
D\Bigl(
(\beta\eta+\tfrac{\alpha}{\eta})\Tr(\mL_T)
+ (1-\beta\eta)\|\mG_1\|_*
\Bigr).$$
Choosing $\eta=\sqrt{\alpha/\beta}$ yields the stated bound.

\subsection{Proof of Proposition~\ref{prop:smoothing}}\label{appen:smoothing}
We first show the lower bound that $\alpha \beta \geq \frac{1}{2}$. Let $\wt{\Psi}$ be any $(\alpha,\beta)$-admissible smoothing of $\|\cdot\|_*$ according to Definition~\ref{def:admissible_potential}. 
By upper stability (Property~\ref{item:stable}), for any $\mL \succ 0$, 
$$\wt{\Psi}(\mzero; \mL) - \wt{\Psi}(\mzero; \mzero) \leq \alpha (\tr(\mL) - \tr(\mzero)) = \alpha \tr(\mL).$$ Moreover, from dominance in Property~\ref{item:dominance} we have $\wt{\Psi}(\mzero;\mzero) = \|\mzero\|_* = 0$, hence $\wt{\Psi}(\mzero; \mL) \le  \alpha \tr(\mL)$. Next, by smoothness (Property~\ref{item:smooth}), for any $\mX \in \mathbb{R}^{m \times n}$, it holds that 
   \begin{align*}
    \wt{\Psi}(\mX;\mL) &\leq \wt{\Psi}(\mzero;\mL) + \langle \nabla \wt{\Psi}(\mzero;\mL), \mX \rangle + \frac{\beta}{2}\tr(\mX^\top \mL^{-1} \mX), \\
    \wt{\Psi}(-\mX;\mL) &\leq \wt{\Psi}(\mzero;\mL) - \langle \nabla \wt{\Psi}(\mzero;\mL), \mX \rangle + \frac{\beta}{2}\tr(\mX^\top \mL^{-1} \mX).
   \end{align*}
   Averaging the two inequalities yields $\frac{1}{2}(\wt{\Psi}(\mX;\mL)+\wt{\Psi}(-\mX;\mL)) \leq \wt{\Psi}(\mzero;\mL) +  \frac{\beta}{2}\tr(\mX^\top \mL^{-1} \mX)$. Using dominance again, we have $ \wt{\Psi}(\pm \mX;\mL) \geq \|\mX\|_*$, and combining with the bound on $\wt{\Psi}(\mzero;\mL)$, we obtain: 
   \begin{equation*}
    \|\mX\|_* \leq \alpha \tr(\mL) +  \frac{\beta}{2}\tr(\mX^\top \mL^{-1} \mX), \quad \forall\; \mL \succ 0, \,\mX \in \mathbb{R}^{m \times n}.
   \end{equation*}
   Now fix $\varepsilon > 0$ and choose $\mL = \sqrt{\frac{\beta}{2\alpha}} \sqrt{\mX \mX^\top + \varepsilon \mI} \succ 0$. With this choice, the right-hand side becomes 
   $$
   \begin{aligned}
    \alpha \tr(\mL) +  \frac{\beta}{2}\tr(\mX^\top \mL^{-1} \mX) &= \sqrt{\frac{\alpha \beta}{2}} \tr(\sqrt{\mX \mX^\top + \varepsilon \mI}) +  \sqrt{\frac{\alpha \beta}{2}} \tr(\mX^\top (\mX \mX^\top + \varepsilon \mI)^{-1/2} \mX) \\
    &\leq \sqrt{2{\alpha \beta}} \tr(\sqrt{\mX \mX^\top + \varepsilon \mI}).
   \end{aligned}
   $$
   Letting $\varepsilon \rightarrow 0$ and using $\tr(\sqrt{\mX\mX^\top}) =\|\mX\|_*$, we conclude that $\|\mX\|_* \leq \sqrt{2 \alpha \beta} \|\mX\|_*$ for all $\mX$. This is possible only if $\alpha\beta \geq \frac{1}{2}$, completing the proof of the lower bound.  

   In the remaining, we show that the regularized smoothing defined in \eqref{eq:FTRL_potential} is $(\frac{1}{2},1)$-admissible and verify the properties in Definition~\ref{def:admissible_potential} one by one.

   \medskip
  \mypara{Feasibility}
   By Danskin's theorem~\citep{bertsekas1999nonlinear}, the gradient of $\wt{\Psi}^R(\mS; \mL)$ is given by solving the maximization problem 
   \begin{equation}\label{eq:danskin}
       \nabla_{\mS}\wt{\Psi}^R(\mS; \mL) = \argmax_{\|\mX\|_{\op} \leq 1} \left\{\langle \mS, \mX\rangle - \frac{1}{2}\mathrm{Tr}(\mX^\top \mL \mX) \right\}.
   \end{equation}
   In particular, this implies that $\nabla_{\mS}\wt{\Psi}^R(\mS; \mL)$ is a feasible point and thus $\|\nabla_{\mS}\wt{\Psi}^R(\mS; \mL)\|_{\op} \leq 1$. 

  \medskip
\mypara{Dominance} It is easy to verify that $\wt{\Psi}^R(\mS; \mzero) = \max_{\|\mX\|_{\op} \leq 1} \langle \mS, \mX \rangle = \|\mS\|_*$. Moreover, for any fixed $\mX$ that satisfies $\|\mX\|_{\op} \leq 1$ and $\mL \in \semiS{m}$, it holds that $\Tr(\mX^\top \mL \mX) \leq \Tr(\mL) \|\mX\|_{\op}^2 \leq \Tr(\mL)$.  Hence, we further have 
\begin{equation*}
  \langle \mS, \mX\rangle - \frac{1}{2}\mathrm{Tr}(\mX^\top \mL \mX)  + \frac{1}{2}\Tr(\mL) \geq \langle \mS, \mX\rangle.
\end{equation*}
Maximizing both sides over $\mX \in \{\mX: \|\mX\|_{\op} \leq 1\}$, we recognize the left-hand side as $\wt{\Psi}^R(\mS; \mL)$, while the right-hand side yields $\max_{\|\mX\|_{\op} \leq 1}\langle \mS, \mX\rangle = \|\mS\|_*$. Hence, this proves that $\wt{\Psi}^R(\mS; \mL) \geq \|\mS\|_*$ for all $\mL \in \semiS{m}$ and $\mX$.  

\medskip
\mypara{Upper stability} Consider any $\mL_1 \preceq \mL_2$ and fix $\mS \in \reals^{m \times n}$. For any $\mX \in \reals^{m \times n}$, we have $\tr(\mX^\top \mL_1 \mX) \leq \tr(\mX^\top \mL_2 \mX)$. This further implies that
\begin{equation*}
  \Bigl\{\langle \mS, \mX\rangle - \frac{\mathrm{Tr}(\mX^\top \mL_2 \mX)}{2} + \frac{\Tr(\mL_2)}{2} \Bigr\} \leq \Bigl\{\langle \mS, \mX\rangle - \frac{\mathrm{Tr}(\mX^\top \mL_1 \mX)}{2} + \frac{\Tr(\mL_1)}{2}\Bigr\} + \frac{\tr(\mL_2) - \tr(\mL_1)}{2}. 
\end{equation*} 
Maximizing both sides over $\mX \in \{\mX: \|\mX\|_{\op} \leq 1\}$ yields $$\wt{\Psi}^R(\mS; \mL_2) \leq \wt{\Psi}^R(\mS; \mL_1) + \frac{1}{2}(\tr(\mL_2) - \tr(\mL_1)).$$ Since $\mS$ is arbitrary, this proves Property~\ref{item:stable} holds with $\alpha = \frac{1}{2}$. 

\medskip
\mypara{Smoothness}
Fix $\mL \succ 0$ and let $\iota_{\|\mX\|_{\op}\le 1}(\mX)$ denote the indicator function of the operator-norm unit ball.
Define
\[
g(\mX)
\;\coloneqq\;
\iota_{\|\mX\|_{\op}\le 1}(\mX)
\;+\;
\frac12 \tr(\mX^\top \mL \mX).
\]
Then the regularized potential in~\eqref{eq:FTRL_potential} can be written as
\[
\wt{\Psi}^R(\mS;\mL)
=
\max_{\mX}\bigl\{\langle \mS,\mX\rangle - g(\mX)\bigr\}
\;+\;
\frac12 \Tr(\mL),
\]
where $\langle \mS,\mX\rangle=\tr(\mS^\top\mX)$ denotes the Frobenius inner product.
Ignoring the additive constant $\frac12\Tr(\mL)$, which does not depend on $\mS$, we may view
$\wt{\Psi}^R(\cdot;\mL)$ as the Fenchel conjugate of $g$.

Since $\frac12\tr(\mX^\top \mL \mX)$ is $1$-strongly convex with respect to the norm
\[
\|\mX\|_{\mL} \coloneqq \sqrt{\tr(\mX^\top \mL \mX)},
\]
and the indicator $\iota_{\|\mX\|_{\op}\le 1}$ is convex, the function $g$ is $1$-strongly convex with respect to $\|\cdot\|_{\mL}$.
By Fenchel duality, its conjugate $\wt{\Psi}^R(\cdot;\mL)$ is therefore $1$-smooth with respect to the dual norm
\[
\|\mS\|_{\mL^{-1}} \coloneqq \sqrt{\tr(\mS^\top \mL^{-1} \mS)}.
\]
Consequently, for any $\mS_1,\mS_2$, the associated Bregman divergence satisfies
\[
\begin{aligned}
\Breg{\wt{\Psi}^R(\cdot;\mL)}{\mS_2}{\mS_1}
&=
\wt{\Psi}^R(\mS_2;\mL)
-
\wt{\Psi}^R(\mS_1;\mL)
-
\langle \nabla_{\mS}\wt{\Psi}^R(\mS_1;\mL),\mS_2-\mS_1\rangle \\
&\le
\frac12
\tr\bigl((\mS_2-\mS_1)^\top \mL^{-1}(\mS_2-\mS_1)\bigr).
\end{aligned}
\]
This verifies Property~\ref{item:smooth} with $\beta=1$.

\section{Proofs for Section~\ref{sec:algs}}

\subsection{Proofs for FTPL (Theorem~\ref{thm:regret_FTPL})}\label{appen:FTPL}
In this section, we verify that the stochastic smoothing potential family $\wt{\Psi}^{S}(\mS; \mL)$ satisfies all the properties in Definition~\ref{def:admissible_potential} with $(\alpha,\beta) = (\sqrt{m}+\sqrt{n}, \frac{1}{\sqrt{n-m-1}})$, and the regret bound directly follows from Theorem~\ref{thm:main}. The first three are relatively straightforward while characterizing the smoothness property is the main challenge here; as we shall see, it requires tools from noncentral Wishart theory. 

\medskip
\mypara{Feasibility} Using the variational representation of the nuclear norm $\|\mS\|_* = \max_{\|\mX\|_{\op} \leq 1} \langle \mS, \mX\rangle$ and exchanging the order of expectation and differentiation via \citep[Proposition 2.2]{bertsekas1973stochastic}, we have
$$
    \nabla\wt{\Psi}^{S}(\mS; \mL) = \E_{\mZ} \Bigl[\argmax_{\|\mX\|_{\op} \leq 1} \langle \mS + \mL \mZ, \mX \rangle\Bigr].
$$
Thus, since the operator norm is a convex function, by Jensen's inequality we have 
$$\|\nabla\wt{\Psi}^{S}(\mS; \mL)\|_{\op} \leq  \E_{\mZ} \Bigl[\Bigl\|\argmax_{\|\mX\|_{\op} \leq 1} \langle \mS + \mL \mZ, \mX \rangle \Bigr\|_{\op}\Bigr] \leq 1.$$

\medskip
\mypara{Dominance} It is easy to see that when $\mL = 0$, the stochastic perturbation vanishes and hence $\wt{\Psi}^{S}(\mS; \mzero) = \|\mS\|_*$. Moreover, since the nuclear norm is convex and $\E[\mZ] = \mzero$, Jensen's inequality implies that
\begin{equation*}
    \wt{\Psi}^{S}(\mS; \mL) = \E_{\mZ \sim \mathcal{MN}(0,\mI_m, \mI_n)}\|\mS + \mL\mZ\|_* \geq   \bigl\|\mS + \mL\E_{\mZ \sim \mathcal{MN}(0,\mI_m, \mI_n)}[\mZ]\bigr\|_* = \|\mS\|_*.
\end{equation*}
This proves Property~\ref{item:dominance}. 

\medskip
\mypara{Upper stability} 
Consider any two matrices $\mL_1 \preceq \mL_2$ and fix $\mS \in \reals^{m \times n}$. Using the linearity of expectation and triangle inequality, we have 
\begin{align*}
  \tilde{\Psi}^S(\mS; \mL_2)- \tilde{\Psi}^S(\mS; \mL_1) &= \E_{\mZ \sim \mathcal{MN}(0,\mI_m, \mI_n)}\|\mS + \mL_2\mZ\|_* - \E_{\mZ \sim \mathcal{MN}(0,\mI_m, \mI_n)}\|\mS + \mL_1\mZ\|_* \\
  & = \E_{\mZ \sim \mathcal{MN}(0,\mI_m, \mI_n)} \left[\|\mS + \mL_2\mZ\|_* - \|\mS + \mL_1\mZ\|_* \right] \\
  & \leq \E_{\mZ \sim \mathcal{MN}(0,\mI_m, \mI_n)} \left[\|(\mL_2-\mL_1)\mZ\|_* \right].
\end{align*}
Moreover, it holds that $\|(\mL_2-\mL_1)\mZ\|_* \leq \|\mZ\|_{\op}\|\mL_2-\mL_1\|_*$, and Gordon's inequality for Gaussian matrices~\citep[Theorem 5.32]{vershynin2010introduction} establishes that $\E_{\mZ \sim \mathcal{MN}(0,\mI_m,\mI_n)}\|\mZ\|_{\op} \leq \sqrt{m} + \sqrt{n}$. 
Using the fact that $\|\mL_2-\mL_1\|_* = \Tr(\mL_2 - \mL_1) = \Tr(\mL_2) - \Tr(\mL_1)$ since $\mL_2 - \mL_1 \succeq \mzero$, we obtain 
\begin{equation*}
  \tilde{\Psi}^S(\mS; \mL_2)- \tilde{\Psi}^S(\mS; \mL_1) \leq \E_{\mZ \sim \mathcal{MN}(0,\mI_m, \mI_n)} \|\mZ\|_{\op} \|\mL_2-\mL_1\|_* \leq (\sqrt{m} + \sqrt{n})(\Tr(\mL_2) - \Tr(\mL_1)).
\end{equation*}
This proves that Property~\ref{item:stable} is satisfied with $\alpha = \sqrt{m} + \sqrt{n}$.

\medskip
\mypara{Smoothness} To simplify the notation, in the following, we view the Hessian of a matrix function $F : \reals^{m \times n} \rightarrow \reals$ as a bilinear map on $\reals^{m \times n}$. Equivalently, under the Fronbenius inner product, we identify it with a self-adjoint linear operator and write $$\nabla^2 F(\mX)[\mD_1, \mD_2] = \langle \nabla^2 F(\mX)[\mD_1], \mD_2\rangle, \quad \forall\, \mD_1, \mD_2 \in \reals^{m \times n}. $$

Fix $\mL \succ 0$. First, since the perturbation follows a matrix Gaussian distribution, by \citep[Lemma 1.5]{Abernethy2016}, $\wt{\Psi}^S(\mS; \mL)$ is twice differentiable. For any $\mS_1, \mS_2 \in \reals^{m \times n}$, let $\Delta \mS = \mS_2 - \mS_1$. By the fundamental theorem of calculus, 
\begin{equation}\label{eq:FTC1}
  \wt{\Psi}^S(\mS_2; \mL) - \wt{\Psi}^S(\mS_1; \mL) = \int_{0}^1 \langle \nabla_{\mS} \wt{\Psi}^S(\mS_1+ t \Delta \mS; \mL), \Delta \mS \rangle \,dt.
\end{equation} 
For any fixed $t \in [0,1]$, we apply the fundamental theorem of calculus again to obtain 
\begin{align}
  \nabla_{\mS} \wt{\Psi}^S(\mS_1+ t \Delta \mS; \mL) - \nabla_{\mS} \wt{\Psi}^S(\mS_1; \mL) &= \int_{0}^1 \nabla^2_{\mS} \wt{\Psi}^S(\mS_1 + st \Delta \mS; \mL)[t \Delta \mS] \,ds \nonumber \\
  & = \int_{0}^1 t\nabla^2_{\mS} \wt{\Psi}^S(\mS_1 + st \Delta \mS; \mL)[\Delta \mS] \,ds. \label{eq:FTC2}
\end{align}
Combining \eqref{eq:FTC1} and \eqref{eq:FTC2} yields
\begin{equation}\label{eq:bregman}
  \begin{aligned}
  \Breg{\wt{\Psi}^S(\cdot;\mL)}{\mS_2}{\mS_1} &= \wt{\Psi}^S(\mS_2; \mL) - \wt{\Psi}^S(\mS_1; \mL) - \langle \nabla_{\mS} \wt{\Psi}^S(\mS_1; \mL), \mS_2 - \mS_1 \rangle \\
  &= \int_0^1 \!\!\int_0^1
t\,
\nabla^2_{\mS} \wt{\Psi}^S(\mS_1 + st \Delta \mS; \mL)
[\Delta\mS,\Delta\mS]
\, ds\, dt .
\end{aligned}
\end{equation}
Given the above expression of Bregman divergence, to prove Property~\ref{item:smooth}, it suffices to show that 
\begin{equation}\label{eq:Hessian_bound}
  \nabla^2_{\mS} \wt{\Psi}^S(\mS; \mL)[\mD, \mD] \leq \beta \Tr(\mD^\top \mL^{-1} \mD), \quad \forall\, \mD \in \reals^{m \times n}.
\end{equation} 
We now prove \eqref{eq:Hessian_bound} in three steps; the proofs of several technical lemmas are deferred to Appendix~\ref{appen:wishart}.  Define the random matrices $\mG := \mS+\mL\mZ$ and $\mA := \mG\mG^\top$. Since $n>m$ and $\mZ \sim \mathcal{MN}(0, \mI_m, \mI_n)$, $\mG$ is full row rank almost surely.

\smallskip
\noindent\textbf{Step 1 (Hessian reduction to $\E[\mA^{-1/2}]$).}
Let $\Phi(\mS)=\|\mS\|_*$. If $\mS$ is full row rank, then $\Phi$ is twice differentiable at $\mS$ and
for all directions $\mD$,
\[
  \nabla^2\Phi(\mS)[\mD,\mD]
  \;\le\;
  \Tr\bigl(\mD^\top(\mS\mS^\top)^{-1/2}\mD\bigr)
  \qquad\text{(Lemma~\ref{lem:nuclear_appen}).}
\]
Using dominated convergence (or Leibniz' rule) to interchange expectation and differentiation, we obtain
\begin{equation}\label{eq:hessian_D_D_clean}
  \nabla^2_{\mS} \wt\Psi^S(\mS; \mL)[\mD,\mD]
  = \E\!\left[\nabla^2\Phi(\mS+\mL\mZ)[\mD,\mD]\right]
  \le \Tr\Bigl(\mD^\top \E[\mA^{-1/2}]\,\mD\Bigr).
\end{equation}

\smallskip
\noindent\textbf{Step 2 (bound $\E[\mA^{-1}]$ via a Wishart argument).}
Let $\mY:=\mL^{-1}\mS$ and $\tilde{\mA}:=(\mY+\mZ)(\mY+\mZ)^\top$.
Then 
$$\mA = (\mS + \mL \mZ)(\mS + \mL \mZ)^\top = \mL(\mL^{-1}\mS + \mZ)(\mL^{-1}\mS + \mZ)^\top \mL =  \mL\,\tilde{\mA}\,\mL,$$ 
and hence
\begin{equation}\label{eq:ainv_factor}
  \mA^{-1} = \mL^{-1}\tilde{\mA}^{-1}\mL^{-1}
  \quad\Rightarrow\quad
  \E[\mA^{-1}] = \mL^{-1}\E[\tilde{\mA}^{-1}]\mL^{-1}.
\end{equation}
Moreover, $\tilde{\mA}$ follows a (noncentral) Wishart distribution with $n$ degrees of freedom and identity scale.
In particular, it holds that
\begin{equation}\label{eq:tildeAinv_bound}
  \E[\tilde{\mA}^{-1}] \preceq \frac{1}{n-m-1}\mI
  \qquad\text{(Lemma~\ref{lem:inverse_wishart_appen}).}
\end{equation}
Combining \eqref{eq:ainv_factor} and \eqref{eq:tildeAinv_bound} yields
\begin{equation}\label{eq:Ainv_bound}
  \E[\mA^{-1}] \preceq \frac{1}{n-m-1}\,\mH^{-2}.
\end{equation}

\smallskip
\noindent\textbf{Step 3 (from $\E[\mA^{-1}]$ to $\E[\mA^{-1/2}]$).}
Since $t\mapsto \sqrt{t}$ is operator concave on $\semiS{m}$~\citep{Bhatia1997}, Jensen's inequality gives
\[
  \E[\mA^{-1/2}]
  = \E\!\bigl[(\mA^{-1})^{1/2}\bigr]
  \preceq \bigl(\E[\mA^{-1}]\bigr)^{1/2}.
\]
Using \eqref{eq:Ainv_bound} and the monotonicity of the matrix square root, we have
\[
  \bigl(\E[\mA^{-1}]\bigr)^{1/2}
  \preceq \left(\frac{1}{n-m-1}\mH^{-2}\right)^{1/2}
  = \frac{1}{\sqrt{n-m-1}}\mH^{-1}.
\]
Plugging this into \eqref{eq:hessian_D_D_clean} leads to
\[
  \nabla^2_{\mS} \wt\Psi^S(\mS; \mL)[\mD,\mD]
  \le \frac{1}{\sqrt{n-m-1}}\Tr(\mD^\top \mH^{-1}\mD).
\]
This completes the proof.

\subsubsection{Technical Matrix Lemmas}\label{appen:wishart}
In the following lemma, we characterize the derivative of the nuclear norm.
\begin{lemma}\label{lem:nuclear_appen}
  Consider the nuclear norm function $\Phi(\mX) = \|\mX\|_*$, where $\mX \in \mathbb{R}^{m\times n}$ with $m \leq n$. If $\mX$ is full row rank, then $\Phi$ is twice differentiable at $\mX$, and for all $\mD \in \mathbb{R}^{m \times n}$, 
  \begin{equation*}
    \nabla^2\Phi(\mX)[\mD, \mD] \leq \Tr(\mD^\top (\mX \mX^\top)^{-1/2} \mD). 
  \end{equation*}
\end{lemma}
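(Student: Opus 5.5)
We prove Lemma~\ref{lem:nuclear_appen} by writing the nuclear norm as a spectral function of the Gram matrix and then using concavity to drop the curvature term. Since $\mX$ has full row rank, $\mX\mX^\top \succ 0$ and we can write $\Phi(\mX) = \Tr\bigl((\mX\mX^\top)^{1/2}\bigr) = h(q(\mX))$. Here $q(\mX)=\mX\mX^\top$ is a quadratic map and $h(\mA)=\Tr(\mA^{1/2})$ on the open cone $\mathbb{S}^m_{++}$. The function $h$ is smooth (indeed real-analytic) on $\mathbb{S}^m_{++}$, because $t\mapsto\sqrt t$ is smooth on $(0,\infty)$ and matrix functions inherit smoothness there. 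Moreover, full row rank is an open condition. Hence $\Phi$ is twice continuously differentiable in a neighbourhood of $\mX$, which gives the differentiability claim.

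Next we compute the second directional derivative along $\mD$ via $g(s)=\Phi(\mX+s\mD)=h(\mA(s))$, where
\[
\mA(s)=\mA+s(\mX\mD^\top+\mD\mX^\top)+s^2\mD\mD^\top, \qquad \mA=\mX\mX^\top .
\]
By the chain rule,
\[
g''(0)=\nabla^2 h(\mA)[\mA'(0),\mA'(0)] + \langle \nabla h(\mA), \mA''(0)\rangle .
\]
We use two facts about $h$. First, $\nabla h(\mA)=\tfrac12\mA^{-1/2}$, which is the same gradient identity used in Lemma~\ref{lem:matrix_potential_appen}. Second, $\mA''(0)=2\mD\mD^\top$. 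Together these show that the second term equals $\langle \tfrac12\mA^{-1/2}, 2\mD\mD^\top\rangle = \Tr(\mD^\top(\mX\mX^\top)^{-1/2}\mD)$, which is exactly the claimed bound.

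It remains to show that the first term is nonpositive. This holds because $h(\mA)=\Tr(\mA^{1/2})$ is concave on $\mathbb{S}^m_{++}$, a consequence of operator concavity of $t\mapsto t^{1/2}$ (as already used in Step~3 and in Lemma~\ref{lem:matrix_potential_appen}). A concave $C^2$ function has a negative semidefinite Hessian, so $\nabla^2 h(\mA)[\mE,\mE]\le 0$ for every symmetric $\mE$, in particular for $\mE=\mX\mD^\top+\mD\mX^\top$. Combining this with the previous paragraph gives $\nabla^2\Phi(\mX)[\mD,\mD]\le \Tr(\mD^\top(\mX\mX^\top)^{-1/2}\mD)$.

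The main care point is justifying that $h$ is $C^2$ near $\mA$ with the stated gradient, so that the chain rule is legitimate. If needed, we can verify this explicitly with the Daleckii--Krein formula in the eigenbasis of $\mA=\mU\Lambda\mU^\top$. The Hessian quadratic form is
\[
\sum_{i,j}\frac{\sqrt{\lambda_i}^{\,\prime}-\sqrt{\lambda_j}^{\,\prime}}{\lambda_i-\lambda_j}\,\tilde E_{ij}^2 ,
\]
where $\tilde E=\mU^\top\mE\mU$ and the divided differences of $t\mapsto \tfrac{1}{2}t^{-1/2}$ are negative, which confirms negativity directly.
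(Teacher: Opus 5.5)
Your proposal is correct and follows essentially the same route as the paper. Both write $\Phi=\Tr\sqrt{\cdot}\circ(\mX\mX^\top)$, apply the second-order chain rule, drop the curvature term by concavity of $\Tr(\mA^{1/2})$, and evaluate $\langle \tfrac12\mA^{-1/2}, 2\mD\mD^\top\rangle=\Tr(\mD^\top(\mX\mX^\top)^{-1/2}\mD)$; your Daleckii--Krein check is just an explicit substitute for the paper's appeal to twice-differentiability of spectral functions.
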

\begin{proof}
  We begin by rewriting the nuclear norm as $\Phi(\mX) = \|\mX\|_* = \Tr(\sqrt{\mX \mX^\top})$. Define $\mA(\mX) = \mX \mX^\top$ and $g(\mA) = \Tr(\sqrt{\mA})$, so that $\Phi = g \circ \mA$. The mapping $\mA(\mX)$ is a polynomial function and hence twice differentiable everywhere. Moreover, since $\mX$ is full row rank, $\mA(\mX)$ is positive definite. The function $g$ is a spectral function induced by $t \mapsto t^{1/2}$, which is $C^2$ on $(0,\infty)$; therefore, $g$ is twice Fr\'echet differentiable over $\semiS{m}$~\citep{lewis2001twice}. Consequently, $\Phi$ is twice differentiable at $\mX$. 
  
  Moreover, applying the chain rule for Fr\'echet derivatives, we obtain: 
\begin{align}
    \nabla \Phi(\mX)[\mD] &=  \nabla g(\mA)[\nabla \mA(\mX)[\mD]] \\ 
    \nabla^2 \Phi(\mX)[\mD_1, \mD_2] &=  \nabla^2 g(\mA)[\nabla \mA(\mX)[\mD_1], \nabla \mA(\mX)[\mD_2] ] + \nabla g(\mA)[\nabla^2 \mA(\mX)[\mD_1, \mD_2]]. \label{eq:chain_rule}
  \end{align}
  We now compute the derivatives of $\mA$ explicitly. For any $\mD \in \mathbb{R}^{m \times n}$, we have $\nabla g(\mA)[\mD] \!= \frac{1}{2}\Tr(\mA^{-1/2} \mD)$, and for any $\mD_1$, $\mD_2 \in \mathbb{R}^{m \times n}$, we have $\nabla^2 \mA(\mX)[\mD_1, \mD_2] = \mD_1 \mD_2^\top + \mD_2 \mD_1^\top$. 
Next, we consider the derivatives of $g$. For any symmetric matrix $\mD$, $\nabla g(\mA)[\mD] = \frac{1}{2}\Tr(\mA^{-1/2} \mD)$. Furthermore, since the scalar function $t\mapsto t^{1/2}$ is operator concave~\citep{Bhatia1997}, the induced spectral function $g$ is concave on $\semiS{m}$. As a result, $\nabla^2 g(\mA)[\mD, \mD] \leq 0$ for all symmetric $\mD$. 
Specializing \eqref{eq:chain_rule} to $\mD_1 = \mD_2 = \mD$, we obtain 
  \begin{equation}\label{eq:quadratic_form}
    \nabla^2 \Phi(\mX)[\mD, \mD] =  \nabla^2 g(\mA)[\nabla \mA(\mX)[\mD], \nabla \mA(\mX)[\mD] ] + \nabla g(\mA)[\nabla^2 \mA(\mX)[\mD, \mD]].
  \end{equation}
  The first term in \eqref{eq:quadratic_form} is nonpositive by concavity of $g$. For the second term, we compute
  \begin{equation*}
    \nabla g(\mA)[\nabla^2 \mA(\mX)[\mD, \mD]] = \nabla g(\mA)[2\mD \mD^\top ] = \frac{1}{2}\Tr(\mA^{-1/2} 2\mD \mD^\top) = \Tr(\mD^\top \mA^{-1/2} \mD).
  \end{equation*}
  Combining these bounds and substituting $\mA = \mX\mX^\top$ yields the desired inequality. 
  \end{proof}

Before presenting our key lemma on a noncentral Wishart matrix in Lemma~\ref{lem:inverse_wishart_appen}, we first recall a basic property of the noncentral chi-square distribution~\citep{Muirhead2005}.
Recall that if $\vx\in\mathbb{R}^k$ satisfies $\vx\sim \mathcal{N}(\vmu,\mI_k)$, then
\(
\mX := \|\vx\|_2^2
\)
follows a noncentral chi-square distribution, denoted by $\chi_k^2(\lambda)$, with
degrees of freedom $k$ and noncentrality parameter $\lambda := \|\vmu\|_2^2$.

\begin{lemma}\label{lem:noncentral_chi}
Suppose $\mX\sim \chi_k^2(\lambda)$ with $k\ge 3$. Then
\[
  \E\!\left[\mX^{-1}\right] \;\le\; \frac{1}{{k-2}}.
\]
\end{lemma}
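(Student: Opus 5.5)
The plan is to use the Poisson mixture representation of the noncentral chi-square distribution and reduce to the central case. Recall that if $\mX\sim\chi_k^2(\lambda)$, then conditionally on a random index $J\sim\mathrm{Poisson}(\lambda/2)$ we have $\mX \mid J \sim \chi^2_{k+2J}$, a central chi-square with $k+2J$ degrees of freedom. This is a standard fact~\citep{Muirhead2005}; if needed, it can be verified by comparing densities or moment generating functions.

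\textbf{Central case.} For a central $\chi^2_\nu$ variable with $\nu\ge 3$, a direct computation with the Gamma density gives
\[
\E[\chi_\nu^{-2}] = \frac{\Gamma(\nu/2-1)}{2\,\Gamma(\nu/2)} = \frac{1}{\nu-2}.
\]
The integral is finite precisely because $\nu/2-1>0$, and this is where the hypothesis $k\ge 3$ enters.

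\textbf{Mixing over $J$.} By the tower property,
\[
\E[\mX^{-1}] = \E_J\!\left[\frac{1}{k+2J-2}\right] \le \frac{1}{k-2},
\]
since $J\ge 0$ and every term in the mixture is bounded by the $J=0$ term. Nonnegativity of the integrand justifies exchanging expectation and conditioning by Tonelli's theorem.

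\textbf{Alternative route.} One could avoid the mixture representation as follows. Write $\vx=\vmu+\vz$ and rotate so that $\vmu=\|\vmu\|\ve_1$. Then use the identity
\[
\E\|\vx\|^{-2} = \int_0^\infty \E\bigl[e^{-s\|\vx\|^2}\bigr]\,ds.
\]
The Gaussian moment generating function factorizes as $(1+2s)^{-k/2}\exp\!\bigl(-\lambda s/(1+2s)\bigr)$. Bounding the exponential factor by $1$ and integrating gives $\int_0^\infty(1+2s)^{-k/2}\,ds = 1/(k-2)$. This version is self-contained and I would probably present it instead, since it needs only Fubini and the scalar Gaussian moment generating function.

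\textbf{Main obstacle.} The only delicate point is justifying the interchange of integrals and establishing integrability near $\|\vx\|=0$, which requires $k\ge3$. Both are handled by Tonelli's theorem, since all integrands are nonnegative.
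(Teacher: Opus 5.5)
Your primary argument is essentially the paper's proof: condition on $J\sim\mathrm{Pois}(\lambda/2)$, use $\E[(\chi^2_\nu)^{-1}]=1/(\nu-2)$, and bound $\E[1/(k+2J-2)]$ by its $J=0$ value. You state the needed condition $\nu>2$ correctly; the paper's write-up says $r>1$ there, which is a slip, but harmless since $k+2J\ge 3$.

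The alternative you say you would rather present is a genuinely different and equally valid route. By Tonelli,
$\E\|\vx\|^{-2}=\int_0^\infty \E[e^{-s\|\vx\|^2}]\,ds=\int_0^\infty (1+2s)^{-k/2}e^{-\lambda s/(1+2s)}\,ds$.
Dropping the exponential factor gives $\int_0^\infty(1+2s)^{-k/2}\,ds=1/(k-2)$ for $k>2$.

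Compared with the paper's argument, this route is fully self-contained: it needs only the scalar Gaussian moment generating function, not a citation for the Poisson-mixture representation. It also yields an exact integral formula for $\E[\mX^{-1}]$, which makes it transparent that the bound is attained only at $\lambda=0$. The mixture route is shorter on the page because it offloads the work to a standard fact about noncentral chi-square distributions.

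In your alternative, the rotation to $\vmu=\|\vmu\|\ve_1$ is harmless but unnecessary, since $\E[e^{-s\|\vx\|^2}]$ factorizes over coordinates for any $\vmu$.
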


\begin{proof}
A noncentral chi-square admits a Poisson mixture representation~\cite[Corollary 1.3.5]{Muirhead2005}. Specifically, if $\mX\sim \chi_k^2(\lambda)$ and $\mJ\sim \mathrm{Pois}(\lambda/2)$,  then conditional on $\mJ=j$, we have
\(
\mX\,|\,(\mJ=j)\sim \chi^2_{k+2j}
\).
Hence, by the law of total expectation,
\begin{equation}\label{eq:poisson}
  \E\!\left[\mX^{-1}\right]
  = \E\!\left[\,\E\!\left[\mX^{-1}\mid \mJ\right]\right]
  = \E\!\left[\,\E\!\left[\left(\chi^2_{k+2\mJ}\right)^{-1}\right]\right].
\end{equation}
For $\mU\sim \chi^2_r$ with $r>1$, a direct calculation gives
\[
  \E[\mU^{-1}]
  = \frac{1}{2^{r/2}\Gamma(r/2)}\int_0^\infty x^{r/2-2}e^{-x/2}\,dx
  = \frac{\Gamma(\frac{r}{2}-1)}{2\,\Gamma(\frac{r}{2})} = \frac{1}{r-2}.
\]
Applying this with $r=k+2\mJ$ in~\eqref{eq:poisson} gives
\[
  \E\!\left[\mX^{-1}\right]
  = \E\!\left[\E\!\left[\left(\chi^2_{k+2\mJ}\right)^{-1}\right]\right]
  \le \E\!\left[\frac{1}{{k+2\mJ-2}}\right]
  \le \frac{1}{{k-2}},
\]
since $\mJ\ge 0$ almost surely and $t\mapsto 1/{t}$ is decreasing.
\end{proof}

 Using Lemma~\ref{lem:noncentral_chi}, we are ready to present our key result on the expected inverse of a Wishart matrix. Our proof is inspired by \cite{HillierKan2022}.  
  \begin{lemma}\label{lem:inverse_wishart_appen}
Let $\mZ\in\mathbb{R}^{m\times n}$ have i.i.d.\ $\mathcal{N}(0,1)$ entries and let $\mY\in\mathbb{R}^{m\times n}$ be deterministic.
Assume $n\ge m+2$ and define
\(
\mA_{\mY} := (\mZ+\mY)(\mZ+\mY)^\top \in \mathbb{R}^{m\times m}.
\)
Then, for any $\mY$,
\begin{equation}\label{eq:inverse_wishart_bound}
  \E[\mA_{\mY}^{-1}] \;\preceq\; \frac{1}{{\,n-m-1\,}}\mI_m.
\end{equation}
\end{lemma}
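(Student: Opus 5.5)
We reduce the matrix inequality to a scalar one along each fixed direction, using the Schur complement formula for diagonal entries of an inverse. Fix a unit vector $\vu\in\mathbb{R}^m$; it suffices to show $\E[\vu^\top \mA_{\mY}^{-1}\vu]\le \frac{1}{n-m-1}$. The distribution of $\mZ$ is invariant under left multiplication by any orthogonal $\mQ\in\mathbb{R}^{m\times m}$. Moreover, $\vu^\top\mA_{\mY}^{-1}\vu = (\mQ\vu)^\top \mA_{\mQ\mY}^{-1}(\mQ\vu)$ with $\mA_{\mQ\mY}=\mQ\mA_{\mY}\mQ^\top$. Since the bound must hold for every deterministic $\mY$, we may therefore rotate so that $\vu=\ve_1$, replacing $\mY$ by $\mQ\mY$. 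It then suffices to bound the $(1,1)$ entry $\E[(\mA_{\mY}^{-1})_{11}]$ uniformly over $\mY$.

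Write $\mB:=\mZ+\mY$ with rows $\vb_1^\top,\dots,\vb_m^\top\in\mathbb{R}^n$, independent, with $\vb_i\sim\mathcal{N}(\vy_i,\mI_n)$. By the block-inverse (Schur complement) formula,
\[
(\mA_{\mY}^{-1})_{11} = \frac{1}{\vb_1^\top \vb_1 - \vb_1^\top \mB_2^\top(\mB_2\mB_2^\top)^{-1}\mB_2\vb_1} = \frac{1}{\|(\mI-\mP_2)\vb_1\|_2^2},
\]
where $\mB_2\in\mathbb{R}^{(m-1)\times n}$ collects rows $2,\dots,m$ and $\mP_2$ is the orthogonal projector onto the row space of $\mB_2$. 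Almost surely $\mB_2$ has full row rank, so $\mI-\mP_2$ projects onto a subspace of dimension $k=n-m+1$.

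Condition on $\mB_2$, which is independent of $\vb_1$. Choose an orthonormal basis $\mV\in\mathbb{R}^{n\times k}$ of the complement, so that $\|(\mI-\mP_2)\vb_1\|_2^2=\|\mV^\top\vb_1\|_2^2$. Since $\mV^\top\vb_1\sim\mathcal{N}(\mV^\top\vy_1,\mI_k)$, this quantity is distributed as $\chi^2_k(\lambda)$ with $\lambda=\|\mV^\top\vy_1\|_2^2$. The assumption $n\ge m+2$ gives $k\ge 3$, so Lemma~\ref{lem:noncentral_chi} yields a conditional expectation of at most $\frac{1}{k-2}=\frac{1}{n-m-1}$, regardless of $\lambda$. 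Taking the outer expectation over $\mB_2$ gives the claim.

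The main subtle point is justifying the conditioning step: the projector depends on $\mB_2$, but independence of the rows makes the conditional law of $\vb_1$ Gaussian, and the noncentrality parameter varies with $\mB_2$ without affecting the bound. We also need almost-sure invertibility and measurability, which are standard since Gaussian matrices are full rank almost surely. Finally, we should confirm that the inequality $\E[\mA_{\mY}^{-1}]\preceq c\,\mI$ is equivalent to the quadratic-form bound over all unit $\vu$, which holds since both sides are symmetric.
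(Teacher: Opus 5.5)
Your proof is correct. Its core is the same as the paper's Step~3: the Schur-complement identity $(\mA_{\mY}^{-1})_{11} = 1/\|(\mI-\mP_2)\bm{b}_1\|_2^2$, conditioning on the remaining rows, and Lemma~\ref{lem:noncentral_chi} with $k=n-m+1$ degrees of freedom.

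Where you differ is in how the matrix inequality is reduced to a scalar bound.

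The paper's route:
\begin{itemize}
\item It uses the SVD of $\mY$ and two-sided orthogonal invariance to replace $\mY$ by $\mSigma=[\diag(\sigma_1,\dots,\sigma_m)\ \ \mzero]$.
\item It then invokes invariance under the sign flips $\widetilde{\mZ}\mapsto\mT\widetilde{\mZ}\widetilde{\mT}$ to show that $\E[\mA_{\mSigma}^{-1}]$ is diagonal.
\item Once the expectation is diagonal, bounding the diagonal entries gives the PSD bound.
\end{itemize}

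Your route observes that the $(1,1)$-entry bound is uniform in $\mY$, since the noncentrality parameter never enters the final estimate. So you can rotate any test direction $\vu$ to $\ve_1$ with a single left orthogonal $\mQ$, at the cost of replacing $\mY$ by $\mQ\mY$. This needs only left orthogonal invariance of $\mZ$, and it removes both the SVD normalization and the sign-change symmetry argument.

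What each route buys:
\begin{itemize}
\item The paper's route yields extra structure: $\E[\mA_{\mY}^{-1}]$ is diagonalized by the left singular vectors of $\mY$. This is not needed for the stated bound.
\item The paper's own Step~3 argument is already valid for an arbitrary mean vector. So its diagonalization serves only to pass from diagonal entries to the Loewner order, and that is exactly the step your direction-wise rotation handles more directly.
\end{itemize}
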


\begin{proof}
\textbf{Step 1: reduce to diagonal $\mY$.}
Let $\mY=\mU\mSigma\mV^\top$ be a singular value decomposition, where $\mU\in\mathbb{R}^{m\times m}$ and $\mV\in\mathbb{R}^{n\times n}$ are orthogonal, and
\(
\mSigma=[\mathrm{diag}(\sigma_1,\dots,\sigma_m)\ \ 0]\in\mathbb{R}^{m\times n}.
\)
Let $\widetilde{\mZ}:=\mU^\top \mZ \mV$. By orthogonal invariance of the standard Gaussian, $\widetilde{\mZ}\stackrel{d}{=}\mZ$, and
\[
  \mA_{\mY}
  = (\mZ+\mU\mSigma\mV^\top)(\mZ+\mU\mSigma\mV^\top)^\top
  = \mU(\widetilde{\mZ}+\mSigma)(\widetilde{\mZ}+\mSigma)^\top \mU^\top.
\]
Hence $\mA_{\mY}^{-1}=\mU\,\mA_{\mSigma}^{-1}\mU^\top$ with $\mA_{\mSigma}:=(\widetilde{\mZ}+\mSigma)(\widetilde{\mZ}+\mSigma)^\top$, and taking expectations gives
\[
  \E[\mA_{\mY}^{-1}] = \mU\,\E[\mA_{\mSigma}^{-1}]\,\mU^\top.
\]
Since $\mU$ is orthogonal, to prove~\eqref{eq:inverse_wishart_bound} it suffices to bound $\E[\mA_{\mSigma}^{-1}]$.

\noindent\textbf{Step 2: $\E[\mA_{\mSigma}^{-1}]$ is diagonal.}
Let $\mathcal{T}:=\{\mT=\mathrm{diag}(\pm1,\dots,\pm1)\in\mathbb{R}^{m\times m}\}$ denote the set of all sign-change matrices, and define
\(
\widetilde{\mT}:=\mathrm{diag}(\mT,\mI_{n-m})\in\mathbb{R}^{n\times n}.
\)
Since $\mSigma$ has a diagonal left block, we have $\mT\mSigma\widetilde{\mT}=\mSigma$.
Moreover, $\mT\widetilde{\mZ}\widetilde{\mT}\stackrel{d}{=}\widetilde{\mZ}$, and therefore
\[
  \mA_{\mSigma}
  = (\widetilde{\mZ}+\mSigma)(\widetilde{\mZ}+\mSigma)^\top =(\tilde{\mZ} +
  \mT\mSigma \tilde{\mT}) (\tilde{\mZ} + 
  \mT \mSigma \tilde{\mT})^\top 
  \stackrel{d}{=}
  \mT\,\mA_{\mSigma}\,\mT.
\]
Inverting the matrices and taking expectations yields
\(
\E[\mA_{\mSigma}^{-1}] = \mT\,\E[\mA_{\mSigma}^{-1}]\,\mT
\)
for all $\mT\in\mathcal{T}$, which forces $\E[\mA_{\mSigma}^{-1}]$ to be diagonal.

\noindent\textbf{Step 3: bound each diagonal entry.}
Since $\E[\mA_{\mSigma}^{-1}]$ is diagonal, it is enough to show
\(
\E[(\mA_{\mSigma}^{-1})_{ii}] \le \frac{1}{\sqrt{n-m-1}}
\)
for each $i$. Fix $i=1$ (the others are identical by relabeling rows).
Write the first row of $\widetilde{\mZ}+\mSigma$ as $\tilde{\vz}_1^\top\in\mathbb{R}^n$ and the remaining rows as
$\widetilde{\mZ}_{-1}\in\mathbb{R}^{(m-1)\times n}$, so that
\(
\widetilde{\mZ}+\mSigma = \begin{bmatrix}\tilde{\vz}_1^\top\\ \widetilde{\mZ}_{-1}\end{bmatrix}.
\)
A Schur complement computation gives
\[
  (\mA_{\mSigma}^{-1})_{11}
  = \frac{1}{
    \tilde{\vz}_1^\top\tilde{\vz}_1
    - \tilde{\vz}_1^\top \widetilde{\mZ}_{-1}^\top(\widetilde{\mZ}_{-1}\widetilde{\mZ}_{-1}^\top)^{-1}\widetilde{\mZ}_{-1}\tilde{\vz}_1
  }
  = \frac{1}{\tilde{\vz}_1^\top \mP\,\tilde{\vz}_1},
\]
where
\(
\mP := \mI_n - \widetilde{\mZ}_{-1}^\top(\widetilde{\mZ}_{-1}\widetilde{\mZ}_{-1}^\top)^{-1}\widetilde{\mZ}_{-1}
\)
is the orthogonal projector onto $\mathrm{Row}(\widetilde{\mZ}_{-1})^\perp$.
Almost surely $\mathrm{rank}(\widetilde{\mZ}_{-1})=m-1$, hence $\mP$ has rank $n-m+1$.

Conditioned on $\widetilde{\mZ}_{-1}$, the vector $\tilde{\vz}_1\sim \mathcal{N}(\sigma_1\ve_1,\mI_n)$ is independent of $\widetilde{\mZ}_{-1}$, and thus
\(
\tilde{\vz}_1^\top \mP \tilde{\vz}_1
\)
is a noncentral chi-square random variable with $k=n-m+1$ degrees of freedom (and some noncentrality parameter depending on $\mP$ and $\sigma_1$). Since $n\ge m+2$, we have $k\ge 3$, and Lemma~\ref{lem:noncentral_chi} implies
\[
  \E\!\left[{(\mA_{\mSigma}^{-1})_{11}} \,\middle|\, \widetilde{\mZ}_{-1}\right]
  = \E\!\left[(\tilde{\vz}_1^\top \mP \tilde{\vz}_1)\,\middle|\, \widetilde{\mZ}_{-1}\right]
  \le \frac{1}{{k-2}}
  = \frac{1}{{n-m-1}}.
\]
Taking expectations over $\widetilde{\mZ}_{-1}$ yields
$
  \E[(\mA_{\mSigma}^{-1})_{11}]\le \frac{1}{{n-m-1}}.
$
Therefore $\E[\mA_{\mSigma}^{-1}] \preceq \frac{1}{{n-m-1}}\mI_m$, and conjugating by $\mU$ completes the proof.
\end{proof}

\subsection{Proofs for FAML (Theorem~\ref{thm:FAML})}\label{appen:faml}
In the section, we verify that the hyperbolic smoothing potential family $\wt{\Psi}^{H}(\mS; \mL)$ satisfies all the properties in Definition~\ref{def:admissible_potential} with $(\alpha, \beta) = (1,1)$.

\medskip
\mypara{Feasibility} Recall from Section~\ref{subsec:FAML} that $\nabla_{\mS} \wt{\Psi}^{H}(\mS;\mL) = \left({\mS \mS^\top + \mL \mL^\top}\right)^{-1/2} \mS$. Since $\mL\mL^\top \succeq 0$, it holds that 
\begin{equation*}
  \nabla_{\mS} \wt{\Psi}^{H}(\mS;\mL) \nabla_{\mS} \wt{\Psi}^{H}(\mS;\mL)^\top = \left({\mS \mS^\top + \mL \mL^\top}\right)^{-1/2} \mS \mS^\top \left({\mS \mS^\top + \mL \mL^\top}\right)^{-1/2} \preceq \mI.
\end{equation*}
This implies $\|\nabla_{\mS} \wt{\Psi}^{H}(\mS;\mL)\|_{\op} \leq 1$, verifying Property~\ref{item:feas}. 

\medskip
\mypara{Dominance} It is easy to see that $\wt{\Psi}^{H}(\mS;\mzero)=\tr\!\bigl(\sqrt{\mS \mS^\top}\bigr) = \|\mS\|_*$. Moreover, since $\mL \mL^\top \succeq 0$, we have $\sqrt{\mS \mS^\top+\mL \mL^\top} \succeq \sqrt{\mS \mS^\top}$, yielding $\wt{\Psi}^{H}(\mS;\mL) \geq \|\mS\|_*$.

\medskip
\mypara{Upper stability} We rewrite the potential as $\wt{\Psi}^{H}(\mS;\mL) = \|\begin{bmatrix}
        \mS & \mL
\end{bmatrix}\|_*$. Then by triangle inequality, for any $\mL_1 \preceq \mL_2$ and $\mX \in \reals^{m \times n}$, we have 
\begin{equation*}
  \wt{\Psi}^{H}(\mS;\mL_2) - \wt{\Psi}^{H}(\mS;\mL_1) = \left\|\begin{bmatrix}
        \mS & \mL_2
\end{bmatrix}\right\|_* - \left\|\begin{bmatrix}
        \mS & \mL_1
\end{bmatrix}\right\|_* \leq \left\|\begin{bmatrix}
        \mzero & \mL_2 - \mL_1
\end{bmatrix}\right\|_* = \|\mL_2 - \mL_1\|_*.
\end{equation*}
Since $\mL_2 -\mL_1 \succeq 0$, we further have $\|\mL_2 - \mL_1\|_* = \tr(\mL_2) - \tr(\mL_1)$. This proves Property~\ref{item:stable} with $\alpha = 1$. 

\medskip
\mypara{Smoothness} We rely on the characterization of the nuclear norm in Lemma~\ref{lem:nuclear_appen}. For any $\mL \succ 0$ and $\mX \succeq 0$, the augmented matrix $\begin{bmatrix}
        \mS & \mL
\end{bmatrix}$ is full row rank. Hence, $\wt{\Psi}^{H}(\mS;\mL) = \|\begin{bmatrix}
        \mS & \mL
\end{bmatrix}\|_*$ is twice differentiable and it follows from Lemma~\ref{lem:nuclear_appen} that for all $\mD \in \reals^{m\times n}$, 
\begin{equation*}
  \nabla^2_{\mS}\wt{\Psi}^{H}(\mS;\mL)[\mD ,\mD] \leq \tr(\mD^\top(\mS\mS^\top + \mL\mL^\top)^{-1/2}\mD).
\end{equation*}
Since $\mS \mS^\top \succeq 0$ and note that $\mL$ is positive definite, this further implies that $$\nabla^2_{\mS}\wt{\Psi}^{H}(\mS;\mL)[\mD ,\mD] \leq \tr(\mD^\top(\mL\mL^\top)^{-1/2}\mD) = \Tr(\mD \mL^{-1}\mD).$$ Hence, we follow the same argument as in Theorem~\ref{thm:regret_FTPL} and use \eqref{eq:bregman} to conclude that Property~\ref{item:smooth} is satisfied with $\beta = 1$.

\section{The Cost of Solving Subproblems}\label{appen:subproblem_complexity}
\subsection{Shampoo and One-Sided Shampoo}\label{appen:projection}

For Shampoo and one-sided Shampoo, each iteration requires solving the quadratic projection subproblem in \eqref{eq:shampoo} over the operator-norm ball. As mentioned in Section~\ref{sec:adaptive_matrix_OL}, this typically necessitates an iterative inner solver, since the operator-norm geometry precludes a simple closed-form update. Concretely, Euclidean projection onto the operator-norm ball is equivalent to projecting the singular values, which in general requires computing a full SVD; likewise, first-order inner methods such as Frank-Wolfe rely on a linear minimization oracle over the operator-norm ball, which amounts to computing a polar factor.

A further complication is that the quadratic subproblem is shaped by the adaptive preconditioners $\mL_t$ and $\mR_t$, which can be ill-conditioned. As a result, inner-loop methods can converge slowly at a sublinear rate. In practice this translates into many inner iterations, and hence multiple SVD or polar computations \emph{per outer step}, which are inherently sequential and can dominate the wall-clock cost.

In contrast, as detailed in the next two sections, our FTPL-based method performs its spectral primitive (polar computation) through a single randomized smoothing step that is naturally parallelizable, while our FAML-based method requires only one matrix inverse square root (or equivalently a single polar computation on an augmented matrix) per iteration.

\subsection{FTPL}\label{appen:cost_FTPL}
As discussed in Section~\ref{subsec:FTPL}, each iteration of the update in~\eqref{eq:matrix_ftpl_mc} computes
$\mG_t \mG_t^\top$, followed by a Cholesky factorization and \(k\) parallel computations of
matrix polar factors. Below we describe a concrete implementation of the polar factor via the Newton--Schulz
iteration. Given an input matrix \(\mS \in \mathbb{R}^{m \times n}\), initialize and iterate
\begin{equation}\label{eq:NS}
  \mX^{(0)} = \frac{\mS}{\|\mS\|_F}, 
  \qquad 
  \mX^{(i+1)} = \tfrac{1}{2}\bigl(3\mI - \mX^{(i)}(\mX^{(i)})^\top\bigr)\mX^{(i)} .
\end{equation}
It is known that the iterates converge (locally) quadratically to \(\mathrm{polar}(\mS)\), and in particular the iteration converges provided the singular values of \(\mX^{(0)}\) lie in \((0,\sqrt{3})\)~\citep{higham2008functions}.
Each Newton--Schulz step is dominated by two matrix--matrix multiplications, for a leading cost
\(2 \times (2m^2n) = 4m^2n\) floating-point operations per step.

Let \(s_{\text{par}}\ge 1\) denote the effective parallel speedup for the \(k\) polar-factor computations. Putting these components together, the leading-order \emph{wall-clock} cost per-iteration is
\begin{equation*}
  \underbrace{2m^2n}_{\mG_t\mG_t^\top}
  \;+\;
  \underbrace{\tfrac{1}{3}m^3}_{\text{Cholesky}}
  \;+\;
  \underbrace{\frac{4kK\,m^2n}{s_{\text{par}}}}_{\substack{K\text{ Newton--Schulz steps}\\ \text{for each of }k\text{ polar factors}}}\,,
\end{equation*}
where \(K\) denotes the number of Newton--Schulz iterations used to compute each polar factor.
\subsection{FAML}\label{appen:cost_FAML}

The first implementation based on~\eqref{eq:FAML_update_equiv} requires computing a matrix inverse square root.
We describe a concrete realization using the coupled Newton--Schulz iteration.
Given an input matrix \(\mA \in \semiS{m}\), initialize
\[
  \mY^{(0)} = \frac{\mA}{\sqrt{\|\mA\|_F}},
  \qquad
  \mZ^{(0)} = \frac{\mI}{\sqrt{\|\mA\|_F}},
\]
and iterate
\begin{equation*}
   \mT^{(i)} = \mZ^{(i)} \mY^{(i)}, \qquad
   \mY^{(i+1)} = \tfrac{1}{2}\mY^{(i)}(3\mI - \mT^{(i)}), \qquad
   \mZ^{(i+1)} = \tfrac{1}{2}(3\mI-\mT^{(i)})\mZ^{(i)} .
\end{equation*}
The coupled iteration converges provided
\(\|\mI - \mA / \|\mA\|_F\|_{\op} \le 1\).
Each Newton--Schulz step is dominated by three \(m\times m\) matrix--matrix
multiplications, resulting in \(6m^3\) floating-point operations per iteration.

In each outer iteration of the algorithm, the following operations are performed:
\begin{itemize}
    \item Compute \(\mG_t \mG_t^\top\): \(2m^2n\) flops;
    \item Compute \(\mS_t \mS_t^\top\): \(2m^2n\) flops;
    \item Coupled Newton--Schulz iteration: \(6m^3\) flops per step;
    \item Final preconditioning: \(2m^2n\) flops.
\end{itemize}
Consequently, the leading-order per-iteration cost is
\[
  6m^2n + 6K m^3,
\]
where \(K\) denotes the number of coupled Newton--Schulz iterations.

Alternatively, the update in~\eqref{eq:FAML_update} can be implemented by computing the polar factor of the
augmented matrix
\(
\widehat{\mS}_t \;=\; \begin{bmatrix} \mS_t & \tfrac{1}{\eta}\mL_t \end{bmatrix}.
\)
We describe a Newton--Schulz-based implementation that exploits the block structure of \(\widehat{\mS}_t\).
Since the final update only uses the leading block of the polar factor, the procedure can be specialized
to avoid explicitly forming \(\mL_t\). In particular, for the choice of \(\mL_t\) in~\eqref{eq:L_t},
this avoids computing a matrix square root such as \((G^2\mI+\mM_t)^{1/2}\).

Let
\(
\widehat{\mS} \;=\; \begin{bmatrix} \mS & \mL \end{bmatrix}
\)
denote the input, and maintain iterates
\(
\widehat{\mX}^{(i)} \;=\; \begin{bmatrix} \mX^{(i)} & \mY^{(i)} \end{bmatrix}
\)
at step \(i\) of the Newton--Schulz iteration. Applying the standard update~\eqref{eq:NS} gives
\[
\widehat{\mX}^{(i+1)}
\;=\;
\tfrac{1}{2}\bigl(3\mI - \widehat{\mX}^{(i)}(\widehat{\mX}^{(i)})^\top\bigr)\widehat{\mX}^{(i)} .
\]
Writing this update in block form yields
\[
\mT^{(i)} \;=\; \tfrac{1}{2}\Bigl(3\mI
- \mX^{(i)}(\mX^{(i)})^\top
- \mY^{(i)}(\mY^{(i)})^\top\Bigr), \qquad
\mX^{(i+1)} = \mT^{(i)}\mX^{(i)}, \qquad
\mY^{(i+1)} = \mT^{(i)}\mY^{(i)} .
\]
Crucially, the update of \(\mX^{(i)}\) depends on \(\mY^{(i)}\) only through the Gram matrix
\(\mX^{(i)}(\mX^{(i)})^\top + \mY^{(i)}(\mY^{(i)})^\top\).
We therefore eliminate \(\mY^{(i)}\) by defining
\[
\mB^{(i)} \;\coloneqq\; \mX^{(i)}(\mX^{(i)})^\top + \mY^{(i)}(\mY^{(i)})^\top .
\]
Since
\[
\mB^{(i+1)}
= \mX^{(i+1)}(\mX^{(i+1)})^\top + \mY^{(i+1)}(\mY^{(i+1)})^\top
= \mT^{(i)} \mB^{(i)} \mT^{(i)},
\]
we obtain the equivalent recursion
\begin{equation}\label{eq:NS_leon}
  \mT^{(i)} = \tfrac{1}{2}\bigl(3\mI - \mB^{(i)}\bigr), \qquad
  \mX^{(i+1)} = \mT^{(i)} \mX^{(i)}, \qquad
  \mB^{(i+1)} = \mT^{(i)} \mB^{(i)} \mT^{(i)} .
\end{equation}

For initialization, note that
\[
\|\widehat{\mS}\|_F^2 \;=\; \|\mS\|_F^2 + \|\mL\|_F^2 \;=\; \|\mS\|_F^2 + \tr(\mL\mL^\top),
\]
and set
\[
\mX^{(0)} = \frac{\mS}{\|\widehat{\mS}\|_F}, \qquad
\mB^{(0)} = \frac{\widehat{\mS}\widehat{\mS}^\top}{\|\widehat{\mS}\|_F^2}
= \frac{\mS\mS^\top + \mL\mL^\top}{\|\widehat{\mS}\|_F^2}.
\]
Moreover, with the choice of \(\mL_t\) in~\eqref{eq:L_t}, we have \(\mL_t\mL_t^\top = G^2\mI + \mM_t\),
so \(\mL_t\mL_t^\top\) can be formed directly from \(\mM_t\coloneqq \sum_{s=1}^t \mG_s\mG_s^\top\)
without explicitly computing \((G^2\mI+\mM_t)^{1/2}\).

Each Newton--Schulz step in~\eqref{eq:NS_leon} requires one \(m\times m\) by \(m\times n\) multiplication
(to update \(\mX^{(i+1)}\)) and two \(m\times m\) by \(m\times m\) multiplications
(to update \(\mB^{(i+1)}\)), for a total of \(2m^2n + 4m^3\) floating-point operations.
Including the initialization cost of forming \(\mS\mS^\top\) and \(\mG_t\mG_t^\top\) (each \(2m^2n\)),
the resulting leading-order per-iteration cost is
\[
4m^2n \;+\; K\,(2m^2n + 4m^3),
\]
where \(K\) denotes the number of Newton--Schulz iterations.

\section{Proofs for Online-to-nonconvex Conversion}\label{appen:o2nc}
We first describe the O2NC reduction protocol in full. At each iteration $t$, the environment draws a sample $\zeta_t\sim\mathcal{D}$ and reveals the corresponding stochastic gradient $\mG_t$. This gradient induces a (discounted) linear loss, which we feed to an online learner $\mathcal{A}$ to obtain the next update direction. Finally, we update the parameters using an exponentially distributed step size:
\begin{align}
\mG_t &= \nabla_{\mW}\ell(\mW_t;\zeta_t), \qquad \zeta_t\sim\mathcal{D}, \label{eq:o2nc_grad}\\
\ell_t^{[\beta]}(\mX) &\coloneq \beta^{-t}\,\langle \mG_t,\mX\rangle,
\qquad \text{(loss revealed)} \label{eq:o2nc_loss}\\
\mX_{t+1} &= \mathcal{A}\!\left(\ell_t^{[\beta]}\right)\in\mathcal{X},
\qquad \text{(learner selection)} \label{eq:o2nc_dir}\\
\mW_{t+1} &= \mW_t + s_{t+1}\mX_{t+1}, \qquad s_{t+1}\sim \mathrm{Exp}(1).
\label{eq:o2nc_update}
\end{align}
The exponential step-size choice is motivated by the \emph{Random Scaling Lemma}~\citep[Lemma~3.1]{zhang2024random}, which connects the expected objective decrease to the linearized update:
\begin{equation}\label{eq:random_scaling}
\E_{s_{t+1}}\!\bigl[L(\mW_{t+1})-L(\mW_t)\bigr]
= \E_{s_{t+1},\zeta_{t+1}}\!\bigl[\langle \mG_{t+1},\mX_{t+1}\rangle\bigr].
\end{equation}
For ease of exposition, we also provide pseudocode for Muon, Pion, and Leon in Algorithms~\ref{alg:muon}, \ref{alg:pion}, and \ref{alg:leon}, respectively.
\begin{algorithm}[!t]
        \caption{Muon}\label{alg:muon}
            \centering 
            \begin{algorithmic}[1]\small
\FOR{iteration $t=1,\dots,T$}
\STATE $\mG_t \leftarrow \nabla_{\mW}\ell(\mW_t; \zeta_t) \in \mathbb{R}^{m \times n}$
          \STATE $\hat{\mG}_t \leftarrow \beta \hat{\mG}_{t-1} + \mG_t$
          \STATE $\mP_t \leftarrow \mathrm{polar}(\hat{\mG}_t)$ 
          \STATE $\mW_{t+1} \leftarrow \mW_t - \alpha_t \mP_t$
          \ENDFOR
            \end{algorithmic}
        \end{algorithm}
  
        \begin{algorithm}[!t]
        \caption{Pion}\label{alg:pion}
            \centering 
            \begin{algorithmic}[1]\small
\FOR{iteration $t=1,\dots,T$}
\STATE $\mG_t \leftarrow \nabla_{\mW}\ell(\mW_t; \zeta_t) \in \mathbb{R}^{m \times n}$
          \STATE $\hat{\mG}_t \leftarrow \beta_1 \hat{\mG}_{t-1} + \mG_t$
          \STATE {$\mM_t \leftarrow \beta_2 \mM_{t-1} + \mG_t \mG_t^\top $}
          \STATE Cholesky factorization $\mM_t = \mL_t \mL_t^\top $
          \STATE {$\mP_t \leftarrow \frac{1}{k}\sum_{i=1}^k \mathrm{polar}(\hat{\mG}_t + \mL_t \mZ^{(i)}_t)$, where $\mZ_t^{(i)} \sim \mathcal{MN}(0, \mI_m, \mI_n)$}
          \STATE $\mW_{t+1} \leftarrow \mW_t - \alpha_t \mP_t$
          \ENDFOR
            \end{algorithmic}
        \end{algorithm}
        \begin{algorithm}[!t]
        \caption{Leon}\label{alg:leon}
            \centering 
            \begin{algorithmic}[1]\small
\FOR{iteration $t=1,\dots,T$}
\STATE $\mG_t \leftarrow \nabla_{\mW}\ell(\mW_t; \zeta_t) \in \mathbb{R}^{m \times n}$
          \STATE $\hat{\mG}_t \leftarrow \beta_1 \hat{\mG}_{t-1} + \mG_t$
          \STATE {$\mM_t \leftarrow \beta_2 \mM_{t-1} + \mG_t \mG_t^\top $}
          \STATE {$\mP_t \leftarrow \left( \hat{\mG}_t \hat{\mG}_t^\top + \mM_t\right)^{-1/2}\hat{\mG}_t$}
          \STATE $\mW_{t+1} \leftarrow \mW_t - \alpha_t \mP_t$
          \ENDFOR
            \end{algorithmic}
        \end{algorithm}

\subsection{Proof of Proposition~\ref{prop:o2nc}}\label{appen:o2nc_formal}

\begin{proposition}[Formal O2NC bound]\label{prop:o2nc_formal}
Define the exponentially weighted average (EWA)
\[
\bar{\mW}_t \coloneq \frac{1-\beta}{1-\beta^{t}}\sum_{s=1}^{t}\beta^{t-s}\mW_s,
\qquad t=1,\dots,T,
\]
and the random time index $\tau \in \{1,\dots,T\}$ with
\[
\Pr(\tau=t)=
\begin{cases}
\frac{1-\beta^{t}}{T}, & t=1,\dots,T-1,\\[2mm]
\frac{1-\beta^{T}}{(1-\beta)T}, & t=T.
\end{cases}
\]
Let $\mE_t \coloneq \mG_t-\nabla L(\mW_t)$ denote the stochastic noise, and let
\[
\Reg_t^{[\beta]}(D)\coloneq \max_{\|\mX\|\le D}\sum_{s=1}^{t}\beta^{t-s}\,\langle \mG_s,\mX_s-\mX\rangle
\]
be the discounted regret (with radius $D$). If
\(
D=\frac{1-\beta}{2\beta}\rho,
\)
then the expected $\rho$-stationarity gap at $\bar{\mW}_\tau$ satisfies
\begin{align}
\E_{\tau}\!\bigl[\|\nabla L(\bar{\mW}_{\tau})\|_\dagger^{[\rho]}\bigr]
&\le \frac{L(\mW_0)-L(\mW^*)}{(1-\beta)\rho\,T}
+ \frac{1}{T}\E\!\Bigl[\Reg_T^{[\beta]}(1) + (1-\beta)\sum_{t=1}^{T-1}\Reg_t^{[\beta]}(1)\Bigr] \nonumber\\
&\quad + \frac{1}{T}\E\!\Bigl[\Bigl\|\sum_{t=1}^{T}\beta^{T-t}\mE_t\Bigr\|\dual\Bigr]
+ \frac{1-\beta}{T}\sum_{t=1}^{T-1}\E\!\Bigl[\Bigl\|\sum_{s=1}^{t}\beta^{t-s}\mE_s\Bigr\|\dual\Bigr].
\label{eq:o2nc_formal_bound}
\end{align}
\end{proposition}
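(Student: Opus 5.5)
I will follow the standard O2NC template of \cite{cutkosky2023optimal,zhang2024random,ahn2025general}, adapted to the discounted-regret comparator and the EWA iterate $\bar{\mW}_t$.

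\textbf{Step 1: telescoping the objective.} By the Random Scaling Lemma \eqref{eq:random_scaling}, summing over $t=0,\dots,T-1$ gives
\[
\E[L(\mW_T)]-L(\mW_0)=\sum_{t=1}^T\E\langle \mG_t,\mX_t\rangle .
\]
For any sequence of comparators $\mU_t$ with $\|\mU_t\|\le D$, I split $\langle \mG_t,\mX_t\rangle=\langle \mG_t,\mX_t-\mU_t\rangle+\langle \mG_t,\mU_t\rangle$. I then choose $\mU_t$ piecewise as the worst-case discounted comparator, namely
\[
\mU_t=-D\,\frac{\sum_{s\le t}\beta^{t-s}\nabla L(\mW_s)}{\|\cdot\|_\dagger}\ \text{(dual direction)},
\]
updated as in the discounted-regret-to-shifting-regret reduction.

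\textbf{Step 2: converting to discounted regret.} Using the identity
\[
\sum_{t=1}^T a_t=\sum_{t=1}^{T}\sum_{s\le t}\beta^{t-s}a_s-\beta\sum_{t=1}^{T-1}\sum_{s\le t}\beta^{t-s}a_s,
\]
which gives $a_t=(\sum_{s\le t}\beta^{t-s}a_s)-\beta(\sum_{s\le t-1}\beta^{t-1-s}a_s)$, the linear loss sum decomposes as
\[
\sum_{t}(1-\beta)\cdot\text{(discounted sum up to }t)+\text{(discounted sum up to }T).
\]
This yields the weights $(1-\beta)$ on $t<T$ and $1$ on $T$. Each discounted sum is bounded by $\Reg_t^{[\beta]}(D)=D\,\Reg_t^{[\beta]}(1)$ plus $-D\|\sum_{s\le t}\beta^{t-s}\mG_s\|_\dagger$. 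I then replace $\mG_s$ by $\nabla L(\mW_s)+\mE_s$, and the triangle inequality produces the noise terms.

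\textbf{Step 3: relating $\sum_s\beta^{t-s}\nabla L(\mW_s)$ to $\bar{\mW}_t$.} Normalized by $(1-\beta)/(1-\beta^t)$, this sum is $\E_{\mY\sim p_t}\nabla L(\mY)$, where $p_t$ puts mass proportional to $\beta^{t-s}$ on $\mW_s$. Its mean is exactly $\bar{\mW}_t$. I need $\E\|\mY-\bar{\mW}_t\|\le\rho$. Since $\|\mW_{s+1}-\mW_s\|\le s_{s+1}D$ with $\E s=1$, I bound
\[
\E\|\mW_s-\bar{\mW}_t\|\lesssim D\sum_k\beta^{k}k(1-\beta)\approx \frac{D\beta}{1-\beta}\cdot 2 .
\]
With $D=\frac{(1-\beta)\rho}{2\beta}$ this is $\le\rho$, so $\|\cdot\|_\dagger$ of the normalized average upper bounds $\|\nabla L(\bar{\mW}_t)\|_\dagger^{[\rho]}$. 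The factor $1-\beta^t$ from the normalization is exactly what the distribution of $\tau$ absorbs.

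\textbf{Step 4: assembly.} Rearranging, $D\sum_t w_t(1-\beta^t)/(1-\beta)\,\|\nabla L(\bar{\mW}_t)\|_\dagger^{[\rho]}$ is bounded by $L(\mW_0)-L^*$ plus $D$ times the regret and noise terms. Dividing by $DT/(1-\beta)$ and recognizing the weights as $\Pr(\tau=t)$ gives \eqref{eq:o2nc_formal_bound}. One subtlety is that $\beta$-discounted regret with the scale-invariant learner uses $\Reg(1)$ times $D$; after dividing by $D$ the first term becomes $\frac{\Delta L}{(1-\beta)\rho T}$ up to constants.

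\textbf{Main obstacle.} I expect the main obstacle to be Step 3: the coupling between the random step sizes and the iterate distances $\E\|\mW_s-\bar{\mW}_t\|$, and getting the constant so that $D=\frac{(1-\beta)\rho}{2\beta}$ suffices. I would bound it by the triangle inequality over consecutive increments together with $\E s=1$, and by summing the geometric series explicitly.
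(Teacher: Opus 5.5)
Your proposal follows the paper's proof essentially step for step. It uses random scaling plus the discounting identity $\sum_{t=1}^T a_t=b_T+(1-\beta)\sum_{t<T}b_t$ with $b_t=\sum_{s\le t}\beta^{t-s}a_s$ (this is exactly Lemma~\ref{lem:decomp}). It then takes a per-$t$ dual-direction comparator, giving $\Reg_t^{[\beta]}(D)$ plus the noise term by the triangle inequality. Next comes the EWA distribution with spread at most $\frac{2\beta}{1-\beta}D$; your route through consecutive increments gives the same constant as the independent-copy argument of Lemma~\ref{lem:Y_barW_close}. Finally it reweights by $\Pr(\tau=t)$.

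The only slip is in Step 4: you must divide by $DT$, not $DT/(1-\beta)$, for the weights to become $\Pr(\tau=t)$. The first term then reads $\frac{2\beta\,(L(\mW_0)-L(\mW^*))}{(1-\beta)\rho T}$, which is also what the paper's own substitution of $D$ produces; the displayed bound silently drops this factor $2\beta\le 2$.
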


Our proof for Proposition~\ref{prop:o2nc_formal} is built on the following decomposition. 
\begin{lemma}[{\citealp[Appendix A.1]{ahn2025general}}]\label{lem:decomp}
For any $\beta\in(0,1)$,
\[
L(\mW_T)-L(\mW_0)
=
\sum_{t=1}^{T}\beta^{T-t}\bigl(L(\mW_t)-L(\mW_{t-1})\bigr)
+(1-\beta)\sum_{t=1}^{T-1}\sum_{s=1}^{t}\beta^{t-s}\bigl(L(\mW_s)-L(\mW_{s-1})\bigr).
\]
\end{lemma}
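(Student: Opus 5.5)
The identity is purely algebraic: it holds for any real sequence, since only the increments $a_t \coloneqq L(\mW_t)-L(\mW_{t-1})$, $t=1,\dots,T$, appear. I will prove
\[
\sum_{t=1}^{T} a_t \;=\; \sum_{t=1}^{T}\beta^{T-t}a_t \;+\; (1-\beta)\sum_{t=1}^{T-1}\sum_{s=1}^{t}\beta^{t-s}a_s .
\]
The left side telescopes to $L(\mW_T)-L(\mW_0)$, which is the claimed identity.

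The plan is to compare the coefficient of each $a_s$ on both sides by exchanging the order of summation in the double sum. For fixed $s\le T-1$, the double sum contributes
\[
(1-\beta)\sum_{t=s}^{T-1}\beta^{t-s} \;=\; 1-\beta^{T-s},
\]
by the finite geometric series. The first sum contributes $\beta^{T-s}$, so the total coefficient of $a_s$ is $1$.

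For $s=T$, the double sum contributes nothing, since $t$ runs only up to $T-1$. The first sum contributes $\beta^{0}=1$.

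Hence every $a_s$ has coefficient exactly $1$, which matches the left side. An equivalent route is to define $E_t=\sum_{s\le t}\beta^{t-s}a_s$, note that $E_t-\beta E_{t-1}=a_t$, and sum over $t$. Summing gives $\sum_t a_t = E_T+(1-\beta)\sum_{t=1}^{T-1}E_t$, which is the same statement.

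I expect no real obstacle here. The only point needing care is bookkeeping at the boundary index $s=T$, together with the fact that the double sum stops at $T-1$.
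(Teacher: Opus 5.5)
Your proof is correct and complete. Exchanging the order of summation gives each $a_s$ with $s\le T-1$ the coefficient $(1-\beta)\sum_{t=s}^{T-1}\beta^{t-s}=1-\beta^{T-s}$, which combines with $\beta^{T-s}$ from the first sum to give $1$; the increment $a_T$ gets coefficient $1$ from the first sum alone, and telescoping finishes the argument. The paper does not prove this identity itself and only cites the reference, so your coefficient-matching argument, or equivalently the recursion $E_t-\beta E_{t-1}=a_t$ with $E_0=0$, is the standard algebraic verification and fully supplies the missing proof.
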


\mypara{Proof of Proposition~\ref{prop:o2nc_formal}}
We first relate function decrease to discounted gradients, regret, and noise.
Fix any $t\in[T]$ and any comparator $\mX$ with $\|\mX\|\le D$.
Using the update $\mW_s=\mW_{s-1}+s_s\mX_s$ and the identity in \eqref{eq:random_scaling},
\[
\E\bigl[L(\mW_s)-L(\mW_{s-1})\bigr]=\E\langle \mG_s,\mX_s\rangle
=\E\langle \nabla L(\mW_s),\mX\rangle + \E\langle \mG_s,\mX_s-\mX\rangle + \E\langle \mE_s,\mX\rangle.
\]
Multiplying by $\beta^{t-s}$ and summing over $s=1,\dots,t$ gives
\begin{align*}
\E\Bigl[\sum_{s=1}^{t}\beta^{t-s}\bigl(L(\mW_s)-L(\mW_{s-1})\bigr)\Bigr]
&= \E\Bigl\langle \sum_{s=1}^{t}\beta^{t-s}\nabla L(\mW_s),\,\mX\Bigr\rangle
+ \E\Bigl[\sum_{s=1}^{t}\beta^{t-s}\langle \mG_s,\mX_s-\mX\rangle\Bigr] \\
&\quad + \E\Bigl\langle \sum_{s=1}^{t}\beta^{t-s}\mE_s,\,\mX\Bigr\rangle.
\end{align*}
By duality, for any matrix $\mA$ and any $\|\mX\|\le D$,
$\langle \mA,\mX\rangle \ge - D\|\mA\|_\dagger$ and
$\langle \mB,\mX\rangle \le D\|\mB\|\dual$.
Therefore,
\begin{align}
\E\Bigl[\sum_{s=1}^{t}\beta^{t-s}\bigl(L(\mW_s)-L(\mW_{s-1})\bigr)\Bigr]
&\le -D\,\E\Bigl\|\sum_{s=1}^{t}\beta^{t-s}\nabla L(\mW_s)\Bigr\|_\dagger
+ \E\bigl[\Reg_t^{[\beta]}(D)\bigr] \nonumber\\
&\quad + D\,\E\Bigl\|\sum_{s=1}^{t}\beta^{t-s}\mE_s\Bigr\|\dual.
\label{eq:one_t_decrease}
\end{align}
Apply \eqref{eq:one_t_decrease} with $t=T$ and also with each $t=1,\dots,T-1$,
then combine them using Lemma~\ref{lem:decomp}. Rearranging yields
\begin{align}
&D\,\E\Bigl\|\sum_{t=1}^{T}\beta^{T-t}\nabla L(\mW_t)\Bigr\|_\dagger
+(1-\beta)D\sum_{t=1}^{T-1}\E\Bigl\|\sum_{s=1}^{t}\beta^{t-s}\nabla L(\mW_s)\Bigr\|_\dagger
\nonumber\\
&\le \E\bigl[L(\mW_0)-L(\mW_T)\bigr]
+\E\bigl[\Reg_T^{[\beta]}(D)\bigr]
+(1-\beta)\sum_{t=1}^{T-1}\E\bigl[\Reg_t^{[\beta]}(D)\bigr]
\nonumber\\
&\quad
+ D\,\E\Bigl\|\sum_{t=1}^{T}\beta^{T-t}\mE_t\Bigr\|\dual
+(1-\beta)D\sum_{t=1}^{T-1}\E\Bigl\|\sum_{s=1}^{t}\beta^{t-s}\mE_s\Bigr\|\dual.
\label{eq:after_decomp}
\end{align}

Next we convert the discounted-gradient terms into a stationarity guarantee for an EWA iterate.
For each $t\in[T]$, define a random iterate $\mY_t$ supported on $\{\mW_1,\dots,\mW_t\}$ by
\[
\Pr(\mY_t=\mW_s)=\frac{1-\beta}{1-\beta^{t}}\beta^{t-s},
\qquad s\in[t],
\]
so that $\E[\mY_t]=\bar{\mW}_t$.
We will use the following concentration-of-the-mean bound.

\begin{lemma}\label{lem:Y_barW_close}
For every $t\in[T]$, we have $\E\|\mY_t-\bar{\mW}_t\|\le \frac{2\beta}{1-\beta}D$.
\end{lemma}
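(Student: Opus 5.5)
Since $\bar{\mW}_t=\E[\mY_t]$, I will write $\mY_t-\bar{\mW}_t$ as a weighted average of differences $\mW_s-\mW_r$ and bound those differences by accumulated step lengths. Fix $t$ and write the weights as $p_s=\frac{1-\beta}{1-\beta^t}\beta^{t-s}$ for $s\in[t]$. By convexity of the norm (Jensen),
\[
\E\|\mY_t-\bar{\mW}_t\| = \sum_{s=1}^t p_s\Bigl\|\sum_{r=1}^t p_r(\mW_s-\mW_r)\Bigr\| \le \sum_{s=1}^t\sum_{r=1}^t p_s p_r\,\|\mW_s-\mW_r\|.
\]
For $r<s$ the update \eqref{eq:o2nc_update} gives $\mW_s-\mW_r=\sum_{j=r+1}^{s} s_j\mX_j$. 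Feasibility gives $\|\mX_j\|\le D$, so $\|\mW_s-\mW_r\|\le D\sum_{j=r+1}^s s_j$. The step sizes $s_j\sim\mathrm{Exp}(1)$ are independent of the quantities involved, with $\E s_j=1$. Taking expectation over them therefore gives $\E\|\mW_s-\mW_r\|\le D|s-r|$, and so
\[
\E\|\mY_t-\bar{\mW}_t\|\le D\sum_{r,s}p_rp_s|s-r|.
\]

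It remains to bound the mean absolute difference of two independent draws from the truncated geometric law $p$. Re-index by $k=t-s\in\{0,\dots,t-1\}$, so that $p$ becomes a geometric distribution with ratio $\beta$ truncated at $t-1$. For the untruncated geometric law $q_k=(1-\beta)\beta^k$ on $k\ge 0$, the standard computation is
\[
\sum_{k,l}q_kq_l|k-l| = 2\sum_{k<l}q_kq_l(l-k) = \frac{2\beta}{1-\beta^2}\le\frac{2\beta}{1-\beta}.
\]
It uses $\Pr(l-k=d\mid l>k)$ being geometric again, or one can sum directly.

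The main obstacle is the truncation: I have to show it does not increase the bound. The plan is the triangle-inequality route $\sum p_rp_s|s-r|\le 2\sum_r p_r|r-\mu|$ with $\mu$ the mean of $p$. Then I bound $\sum_r p_r|r-\mu|$, the mean absolute deviation of $k$ under the truncated law, by $\E[k]\le \frac{\beta}{1-\beta}$. This holds because $k\ge 0$ implies $\E|k-\E k|\le 2\E k$. Here $\E k\le\frac{\beta}{1-\beta}$ because conditioning a geometric variable on $k\le t-1$ only lowers its mean, being a monotone restriction.

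This chain actually gives a factor $4$, so it is slightly lossy. To recover the exact constant $2$, I would instead compute $\sum_{k<l\le t-1}\beta^{k+l}(l-k)$ explicitly. Equivalently, I would note that the truncated law is the untruncated one conditioned on $\{k,l\le t-1\}$, and that $|k-l|$ conditioned on both lying in an initial segment is stochastically dominated by its unconditional version. This follows because, given $\min(k,l)$, the gap is geometric truncated to a smaller range. That conditioning argument gives $\E\|\mY_t-\bar{\mW}_t\|\le \frac{2\beta}{1-\beta}D$.
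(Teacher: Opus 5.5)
Your argument is correct, and its first half is the same as the paper's: the Jensen/independent-copy step, the triangle inequality along the trajectory, and $\E\|s_j\mX_j\|\le D$. No independence of $s_j$ is needed for the last of these, since $s_j\|\mX_j\|\le D s_j$ pointwise.

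The difference is in how you bound $\sum_{r,s}p_rp_s|s-r|$, the mean absolute difference of two independent draws $I,J$ from the truncated geometric index law. The paper works with the truncated law throughout. It writes $|i-j|$ as the number of increments $\mW_{s-1}\to\mW_s$ between the two indices, so step $s$ receives coefficient $2\Pr(I\ge s)\Pr(J<s)$. It bounds this directly by $2\beta^{t-s+1}/(1-\beta^t)$, which is the elementary inequality $(1-a)(1-b)\le 1-ab$ with $a=\beta^{t-s+1}$ and $b=\beta^{s-1}$, and a geometric sum finishes.

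Your comparison with the untruncated law also works and gives the slightly sharper constant $\frac{2\beta}{1-\beta^2}$, but you should state the mechanism precisely:
\begin{itemize}
\item Under the untruncated law, the gap $|k-l|$ is independent of $\min(k,l)$ by memorylessness. Its mass is proportional to $1$ at $0$ and to $2\beta^d$ at $d\ge 1$, so it is not exactly geometric.
\item Conditioning on $\{k,l\le t-1\}$ and $\min(k,l)=m$ restricts this same law to $\{0,\dots,t-1-m\}$.
\item That restriction is stochastically dominated by the unrestricted law.
\end{itemize}
The independence is what lets you average over $m$, even though truncation changes the distribution of the minimum.

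The paper's per-step inequality is shorter and never has to reason about truncation. Your route explains the constant as a property of the geometric distribution. As you noted, the factor-$4$ route through the mean deviation would not by itself give the stated bound.
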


\begin{proof}
Fix $t$ and define $q_{t,s}\coloneq \frac{1-\beta}{1-\beta^{t}}\beta^{t-s}$.
Let $\widehat{\mY}_t$ be an independent copy of $\mY_t$. By Jensen,
\[
\E\|\mY_t-\bar{\mW}_t\|
=\E\bigl\|\mY_t-\E[\widehat{\mY}_t]\bigr\|
\le \E\|\mY_t-\widehat{\mY}_t\|
=2\sum_{i=1}^{t}\sum_{j=1}^{i-1}q_{t,i}q_{t,j}\|\mW_i-\mW_j\|.
\]
By the triangle inequality,
$\|\mW_i-\mW_j\|\le \sum_{s=j+1}^{i}\|\mW_s-\mW_{s-1}\|$.
Thus
\[
\E\|\mY_t-\bar{\mW}_t\|
\le 2\sum_{s=2}^{t}\Bigl(\sum_{i=s}^{t}\sum_{j=1}^{s-1}q_{t,i}q_{t,j}\Bigr)\,\|\mW_s-\mW_{s-1}\|.
\]
Moreover,
\[
\sum_{i=s}^{t}\sum_{j=1}^{s-1}q_{t,i}q_{t,j}
=\Bigl(\frac{1-\beta}{1-\beta^{t}}\Bigr)^2
\Bigl(\sum_{i=s}^{t}\beta^{t-i}\Bigr)\Bigl(\sum_{j=1}^{s-1}\beta^{t-j}\Bigr)
\le \frac{\beta^{t-s+1}}{1-\beta^{t}}.
\]
Taking expectations and using $\E\|\mW_s-\mW_{s-1}\|=\E\|s_s\mX_s\|\le D$,
\[
\E\|\mY_t-\bar{\mW}_t\|
\le 2\sum_{s=2}^{t}\frac{\beta^{t-s+1}}{1-\beta^{t}}D
\le \frac{2\beta}{1-\beta}D.
\]
\end{proof}

Now take $D=\frac{1-\beta}{2\beta}\rho$ so Lemma~\ref{lem:Y_barW_close} gives
$\E\|\mY_t-\bar{\mW}_t\|\le \rho$.
By the definition of the $\rho$-stationarity gap,
this implies
\[
\|\nabla L(\bar{\mW}_t)\|_\dagger^{[\rho]}
\le \bigl\|\E[\nabla L(\mY_t)]\bigr\|_\dagger.
\]
By Jensen and the definition of $\mY_t$,
\[
\bigl\|\E[\nabla L(\mY_t)]\bigr\|_\dagger
\le \E\Bigl\|\nabla L(\mY_t)\Bigr\|_\dagger
\le \frac{1-\beta}{1-\beta^{t}}\,\E\Bigl\|\sum_{s=1}^{t}\beta^{t-s}\nabla L(\mW_s)\Bigr\|_\dagger.
\]
Plugging this into \eqref{eq:after_decomp} yields
\begin{align*}
&\frac{1-\beta^{T}}{1-\beta}\E\bigl[\|\nabla L(\bar{\mW}_{T})\|_\dagger^{[\rho]}\bigr]
+ \sum_{t=1}^{T-1}(1-\beta^{t})\,\E\bigl[\|\nabla L(\bar{\mW}_{t})\|_\dagger^{[\rho]}\bigr] \\
&\le \frac{1}{D}\E\bigl[L(\mW_0)-L(\mW_T)\bigr]
+\frac{1}{D}\E\bigl[\Reg_T^{[\beta]}(D)\bigr]
+\frac{1-\beta}{D}\sum_{t=1}^{T-1}\E\bigl[\Reg_t^{[\beta]}(D)\bigr] \\
&\quad
+\E\Bigl\|\sum_{t=1}^{T}\beta^{T-t}\mE_t\Bigr\|\dual
+(1-\beta)\sum_{t=1}^{T-1}\E\Bigl\|\sum_{s=1}^{t}\beta^{t-s}\mE_s\Bigr\|\dual.
\end{align*}
Note that by a scaling argument, we have $\frac{1}{D}\Reg_t^{[\beta]}(D) = \Reg_t^{[\beta]}(1)$.
Finally, by the definition of $\tau$,
\[
\E_{\tau}\bigl[\|\nabla L(\bar{\mW}_{\tau})\|_\dagger^{[\rho]}\bigr]
=
\frac{1}{T}\Biggl(
\sum_{t=1}^{T-1}(1-\beta^{t})\,\E\bigl[\|\nabla L(\bar{\mW}_{t})\|_\dagger^{[\rho]}\bigr]
+ \frac{1-\beta^{T}}{1-\beta}\E\bigl[\|\nabla L(\bar{\mW}_{T})\|_\dagger^{[\rho]}\bigr]
\Biggr).
\]
Divide the previous inequality by $T$, substitute $D=\frac{1-\beta}{2\beta}\rho$,
and use $\E[L(\mW_0)-L(\mW_T)]\le L(\mW_0)-L(\mW^*)$ to obtain
\eqref{eq:o2nc_formal_bound}. This completes the proof.
\qed

\subsection{Proof of Theorem~\ref{thm_pion}}

For clarity, we assume the FTPL expectation is computed exactly; the finite-sample implementation follows similarly by a concentration argument.

\paragraph{Step 1: Bounding the discounted regret.}
By Theorem~\ref{thm:regret_FTPL}, for any $t\in[T]$,
\begin{equation}\label{eq:pion_regret_raw}
\Reg_t^{[\beta]}(1)
\le 2\sqrt{2}\,C\left(
\Tr\!\left[\sqrt{\sum_{s=1}^t \beta^{2(t-s)} \mG_s \mG_s^\top}\right]
+\frac{mG}{\beta}
\right),
\qquad
C\coloneq\Bigl(\frac{n}{n-m-1}\Bigr)^{1/4},
\end{equation}
where $G\coloneq \max_{s\in[T]}\|\mG_s\|_{\op}$.
Under Assumption~\ref{assm:gradient_bound}, we have
$\E[\mG_s\mG_s^\top]\preceq \mQ^2$.
Using Jensen's inequality and $\Tr(\sqrt{\mA})=\|\mA^{1/2}\|_*$ for $\mA\succeq 0$,
\begin{align}
\E\Tr\!\left[\sqrt{\sum_{s=1}^t \beta^{2(t-s)} \mG_s \mG_s^\top}\right]
&\le
\Tr\!\left[\sqrt{\sum_{s=1}^t \beta^{2(t-s)} \E[\mG_s \mG_s^\top]}\right] \nonumber\\
&\le
\Tr\!\left[\sqrt{\sum_{s=1}^t \beta^{2(t-s)} \mQ^2}\right]
=
\frac{\|\mQ\|_*}{\sqrt{1-\beta^2}}.
\label{eq:pion_regret_trace_bound}
\end{align}
Combining \eqref{eq:pion_regret_raw}--\eqref{eq:pion_regret_trace_bound} yields
\begin{equation}\label{eq:pion_regret_bound}
\E\bigl[\Reg_t^{[\beta]}(1)\bigr]
\le
2\sqrt{2}\,C\left(
\frac{\|\mQ\|_*}{\sqrt{1-\beta^2}}
+\frac{mG}{\beta}
\right).
\end{equation}

\paragraph{Step 2: Bounding the discounted noise.}
Under Assumption~\ref{assm:gradient_bound}, $\E[\mE_s\mE_s^\top]\preceq  \E[\mG_s\mG_s^\top]\preceq \mQ^2$.
By Jensen and the same trace-sqrt manipulation,
\begin{align}
\E\Bigl\|\sum_{s=1}^t \beta^{t-s}\mE_s\Bigr\|_*
&=
\E\Tr\!\left(\sqrt{\Bigl(\sum_{s=1}^t \beta^{t-s}\mE_s\Bigr)\Bigl(\sum_{s=1}^t \beta^{t-s}\mE_s\Bigr)^\top}\right) \nonumber\\
&\le
\Tr\!\left(
\sqrt{
\E\Bigl[\sum_{s=1}^t\sum_{r=1}^t \beta^{t-s}\beta^{t-r}\mE_s\mE_r^\top\Bigr]
}
\right)
\le
\Tr\!\left(\sqrt{\sum_{s=1}^t \beta^{2(t-s)} \E[\mE_s\mE_s^\top]}\right) \nonumber\\
&\le
\Tr\!\left(\sqrt{\sum_{s=1}^t \beta^{2(t-s)} \mQ^2}\right)
=
\frac{\|\mQ\|_*}{\sqrt{1-\beta^2}}.
\label{eq:pion_noise_bound}
\end{align}

\paragraph{Step 3: Plug into the O2NC bound and choose parameters.}
Proposition~\ref{prop:o2nc} (with $D= \frac{1-\beta}{2\beta}\rho$) together with
\eqref{eq:pion_regret_bound}--\eqref{eq:pion_noise_bound} implies
\begin{align}
\E_{\tau}\!\left[\,
\|\nabla L(\bar{\mW}_{\tau})\|_*^{[\rho]}
\right]
&\le \frac{G L}{DT}
+ \frac{1+(1-\beta)(T-1)}{T}\cdot
2\sqrt{2}\,C\left(
\frac{\|\mQ\|_*}{\sqrt{1-\beta^2}}
+\frac{mG}{\beta}
\right)\nonumber\\
&\quad
+ \frac{1+(1-\beta)(T-1)}{T}\cdot \frac{\|\mQ\|_*}{\sqrt{1-\beta^2}}.
\label{eq:pion_step3_start}
\end{align}
Using $1+(1-\beta)(T-1)\le \beta + (1-\beta)T$ and $\sqrt{1-\beta^2}\ge \sqrt{1-\beta}$, we obtain
\begin{align}
\E_{\tau}\!\left[\,
\|\nabla L(\bar{\mW}_{\tau})\|_*^{[\rho]}
\right]
&\le \frac{G L}{DT}
+ (2\sqrt{2}C+1)\Bigl(\frac{\beta}{T}+1-\beta\Bigr)\frac{\|\mQ\|_*}{\sqrt{1-\beta}}
\nonumber\\
&\quad
+ 2\sqrt{2}C\Bigl(\frac{\beta}{T}+1-\beta\Bigr)\frac{mG}{\beta}.
\label{eq:pion_step3_simplified}
\end{align}
Since $2\sqrt{2}+1\le 4$, we further simplify to
\begin{equation}\label{eq:pion_step3_clean}
\E_{\tau}\!\left[\,
\|\nabla L(\bar{\mW}_{\tau})\|_*^{[\rho]}
\right]
\le
\frac{G L}{DT}
+4C\|\mQ\|_*\Bigl(\frac{1}{T\sqrt{1-\beta}}+\sqrt{1-\beta}\Bigr)
+\frac{2\sqrt{2}CmG}{T}
+2\sqrt{2}C\frac{(1-\beta)mG}{\beta}.
\end{equation}

We now choose $\beta$ and $D$ as functions of $\varepsilon$.
Let
\[
\sqrt{1-\beta}\coloneq \frac{\varepsilon}{20C\|\mQ\|_*}
\qquad\Longleftrightarrow\qquad
\beta = 1-\Bigl(\frac{\varepsilon}{20C\|\mQ\|_*}\Bigr)^2,
\]
and assume $\varepsilon \le 10\sqrt{2}\,C\|\mQ\|_*$, so that $\beta\ge \tfrac12$.
Set $D=\frac{1-\beta}{2\beta}\rho$ as required by Proposition~\ref{prop:o2nc}.
Plugging these choices into \eqref{eq:pion_step3_clean} gives
\begin{align}
\E_{\tau}\!\left[\,
\|\nabla L(\bar{\mW}_{\tau})\|_*^{[\rho]}
\right]
&\le
\underbrace{\frac{2\beta}{(1-\beta)\rho}\cdot \frac{G L}{T}}_{\le \; \frac{800C^2(\|\mQ\|_*)^2G L}{\rho\,\varepsilon^2 T}}
+
\underbrace{\frac{4C\|\mQ\|_*}{T\sqrt{1-\beta}}}_{= \;\frac{80C^2(\|\mQ\|_*)^2}{\varepsilon T}}
+
\underbrace{4C\|\mQ\|_*\sqrt{1-\beta}}_{=\;\varepsilon/5}
\nonumber\\
&\quad
+\frac{2\sqrt{2}CmG}{T}
+ 2\sqrt{2}C\frac{(1-\beta)mG}{\beta}.
\label{eq:pion_final_preT}
\end{align}
The last term satisfies
\[
2\sqrt{2}C\frac{(1-\beta)mG}{\beta}
\le 4\sqrt{2}C(1-\beta)mG
=
4\sqrt{2}C mG \cdot \frac{\varepsilon^2}{400C^2(\|\mQ\|_*)^2}
=
\frac{\sqrt{2}\,mG\,\varepsilon^2}{100\,C(\|\mQ\|_*)^2},
\]
which is $\le \varepsilon/5$ provided $\varepsilon \le \frac{10\sqrt{2}\,C(\|\mQ\|_*)^2}{mG}$.
Under this additional small-accuracy condition, \eqref{eq:pion_final_preT} reduces to
\[
\E_{\tau}\!\left[\,
\|\nabla L(\bar{\mW}_{\tau})\|_*^{[\rho]}
\right]
\le
\frac{800C^2(\|\mQ\|_*)^2G L}{\rho\,\varepsilon^2 T}
+\frac{80C^2(\|\mQ\|_*)^2}{\varepsilon T}
+\frac{2\sqrt{2}CmG}{T}
+\frac{2\varepsilon}{5}.
\]
Hence, if
\[
T \;\ge\; \max\left\{
\frac{4000\,C^2(\|\mQ\|_*)^2G L}{\rho\,\varepsilon^3},
\;
\frac{400\,C^2(\|\mQ\|_*)^2}{\varepsilon^2},
\;
\frac{10\sqrt{2}\,C mG}{\varepsilon}
\right\},
\]
then $\E_{\tau}\!\left[\,
\|\nabla L(\bar{\mW}_{\tau})\|_*^{[\rho]}
\right]\le \varepsilon$, which means Pion outputs a $(\rho,\varepsilon)$-stationary point after $T$ iterations. This proves the theorem.

\subsection{Proof of Theorem~\ref{thm:Leon}}
The proof follows the same O2NC reduction as in Theorem~\ref{thm_pion}; the only change is the regret guarantee of the underlying online learner. Specifically, Leon (derived from FAML) achieves the same discounted-regret bound as Pion but without the dimensional factor, i.e., it corresponds to setting $C \;=\; \Bigl(\frac{n}{n-m-1}\Bigr)^{1/4}$ to $C=1$
in the proof of Theorem~\ref{thm_pion}. Substituting $C=1$ throughout yields the stated iteration complexity for Leon, with all other steps unchanged. For brevity, we do not repeat the argument.

\section{Empirical Validation: Additional Details}\label{app:experiments}

To empirically validate the stability--convergence behavior suggested by our theory, we compare \emph{Pion} and \emph{Leon} against \emph{Muon} on a synthetic Robust Matrix Sensing objective explicitly constructed to violate smooth-optimization assumptions. The purpose of this experiment is not to showcase best-case speed on benign losses, but to stress-test whether the \emph{intrinsic (implicit) smoothing} built into the Matrix OLO formulation of Pion and Leon translates into visibly steadier descent dynamics when gradients are discontinuous and the landscape contains oscillatory nonconvex structure.

The key distinction is that Pion and Leon admit explicit stability--convergence guarantees in our framework, whereas Muon does not. This gap becomes most evident in nonsmooth regimes: without an intrinsic smoothing mechanism, Muon can react sharply to abrupt changes in the gradient matrix---exactly the behavior induced by objectives with kinked terms or rapidly varying components. In contrast, Pion and Leon inherit an \emph{implicit smoothing effect} from the Matrix OLO geometry. Consequently, we expect Muon to exhibit more fluctuations on this stress test, while Pion and Leon follow smoother, more stable trajectories, with Leon typically enjoying a modest advantage consistent with our theoretical bounds.

We instantiate this stress test via a \emph{Robust Matrix Sensing with Nonconvex Ripples} objective:
\begin{equation*}
    f(\mX) = \frac{1}{m} \sum_{k=1}^m 
    \left(
        |\langle \mA_k, \mX \rangle|
        \cdot
        \left(1 - 0.9 \cos\!\left(3 \langle \mA_k, \mX \rangle\right)\right)
        + 0.5
    \right),
\end{equation*}
where $\mA_k$ are random measurement matrices with i.i.d.\ Gaussian entries. In our experiments, $\mX \in \reals^{d \times d}$ with $d=20$ and $m=100$ measurements. For Leon, we set $\beta_1=\beta_2=0.9$. The absolute-value term introduces $\ell_1$-type nonsmoothness and hence gradient discontinuities near $\langle \mA_k,\mX\rangle=0$, while the high-frequency cosine modulation creates persistent nonconvex ``ripples'' that repeatedly perturb the local geometry. Together, these components generate abruptly varying gradient signals designed to destabilize optimizers that track instantaneous gradients too closely.

\newpage

\printbibliography
        
\end{document}